\newcommand{\per}{{\rm per}}
\newcommand{\supp}{{\rm supp}}
\newtheorem{theorem}{Theorem}
\newtheorem{lemma}{Lemma}
\newtheorem{proposition}{Proposition}
\newtheorem{claim}{Claim}
\newtheorem{conjecture}{Conjecture}
\newenvironment{proof}[1][\hspace{-1.0ex}]%
{\par\addvspace{1mm}{\it Proof\hspace{1.0ex}{#1}.} }%
{\quad$\Box$\par\addvspace{1mm}}
    \newif\ifNoRemark
    \def\addtheorem#1#2#3#4{ 
    \ifthenelse{\expandafter\isundefined\csname the#2\endcsname}{\newcounter{#2}}{}
    \newenvironment{#1}[1][\global\NoRemarktrue]
     {\par\addvspace{2mm}\noindent 
       \refstepcounter{#2}{\bf #3~\csname the#2\endcsname
      \vphantom{##1}\ifNoRemark.\ \else\ (##1).\fi}\begingroup #4}%
     {\endgroup\par\addvspace{1mm}\global\NoRemarkfalse}
    \expandafter\newcommand\csname b#1\endcsname{\begin{#1}}
    \expandafter\newcommand\csname e#1\endcsname{\end{#1}}
    }
\begin{document}

\title{Characterization of polystochastic matrices of
order $4$ with zero permanent
}

\author{A. L. Perezhogin$^1$, V. N. Potapov$^2$, A. A. Taranenko$^1$, S. Yu. Vladimirov$^3$\\
$\quad^1$Sobolev Institute of Mathematics   $\quad^3$Novosibirsk State University\\
\hskip10mm$\quad^1$\{pereal,taa\}@math.nsc.ru
$\quad^2${quasigroup349@gmail.com} $\quad^3${s.vladimirov@g.nsu.ru}
}

\maketitle

\begin{abstract}
A multidimensional nonnegative matrix is called polystochastic if
the sum of its entries over each line is equal to $1$. The permanent
of a multidimensional matrix is the sum of products of entries over
all diagonals.  We prove that if  $d$ is even, then the permanent of a
$d$-dimensional polystochastic matrix of order $4$ is positive, 
and  for odd $d$, we give a complete characterization of $d$-dimensional  
polystochastic matrices with zero permanent.
\end{abstract}

Keywords:   polystochastic matrix, permanent of multidimensional
matrix, bitrade, unitrade, multidimensional permutation, MDS code.

 MSC[2020] 15B51, 15A15, 05D15,  05B15

\section{Introduction}

A $d$-dimensional  matrix $A$ of order $n$ is an array  $A = (a_{\alpha})_{\alpha \in I_n^d} $,  $a_{\alpha} \in \mathbb{R}$, where $I_n^d =\{ \alpha = (\alpha_1, \ldots, \alpha_d)$, $\alpha_i \in \{ 0, \ldots, n-1\}\} $ is the index set of $A$.  A \textit{$k$-dimensional plane} in a matrix $A$ is a $k$-dimensional submatrix obtained by fixing $d - k$ positions of indices and letting the values in other $k$ positions vary from $0$ to $n-1$. A $1$-dimensional plane of the matrix $A$ is said to be a \textit{line}, and $(d-1)$-dimensional planes are \textit{hyperplanes}.

A $d$-dimensional matrix $A$ of order $n$ is a  \textit{polystochastic matrix}  if  $a_{\alpha} \geq 0$ for all $\alpha$ and the sum  of  entries over each line of $A$ is equal to $1$.   $2$-dimensional polystochastic matrices are known as  {\it doubly stochastic}. If all entries of  a multidimensional  matrix $A$ are either $0$ or $1$, then $A$ is a \textit{$(0,1)$-matrix},  and $d$-dimensional polystochastic $(0,1)$-matrices are said to be \textit{$d$-dimensional permutations}. The support  of a multidimensional permutation matrix can be viewed as an MDS code with distance $2$.

A \textit{diagonal} in a $d$-dimensional matrix $A$ of order $n$ is a set $\{\alpha^1, \ldots, \alpha^n\}$ of $n$ indices  such that each pair $\alpha^i$ and $\alpha^j$ is distinct in all components. A diagonal  $\{\alpha^1, \ldots, \alpha^n\}$  is  \textit{positive} for a matrix $A$ if  $a_{\alpha^i} > 0 $  for all $i \in \{ 1, \ldots, n\}$. The \textit{permanent} of a multidimensional matrix $A$ is given by
$${\rm per} A = \sum\limits_{p \in D(A)} \prod\limits_{\alpha \in p} a_{\alpha},$$
 where $D(A)$  is the set of all diagonals of $A$. 

It is easy to see  that the number of perfect matchings in a $d$-partite $d$-uniform hypergraph with parts of size $n$ is equal to the permanent of the $d$-dimensional  matrix of order $n$ representing the adjacency of parts  (see, e.g., \cite{AAT16}).   The problem of determining the positivity of the permanent of a $d$-dimensional $(0,1)$-matrix of order $n$ is  NP-hard because  for $d = 3$ it is equivalent to the  $3$-dimensional matching problem (one of Karp's $21$ NP-complete problems).    Certain conditions on the number of $1$s in lines, sufficient  for the positivity of  the permanent of $d$-dimensional $(0,1)$-matrices, were established in~\cite{Ah} (in the context of hypergraphs).

The permanent of multidimensional permutations is closely related to the number of   transversals in latin squares and hypercubes. A $d$-dimensional latin hypercube $Q$ of order $n$ is a multidimensional matrix filled with $n$ symbols so that each line contains all different symbols. $2$-dimensional latin hypercubes are usually called latin squares. A transversal in a latin hypercube $Q$ is a diagonal that contains all $n$ symbols.

There is a one-to-one correspondence between $d$-dimensional latin hypercubes $Q$ of order $n$ and $(d+1)$-dimensional permutations $A$ of order $n$: an entry $q_{\alpha_1, \ldots, \alpha_d}$ of $Q$ equals $\alpha_{d+1}$ if and only if an entry $a_{\alpha_1, \ldots, \alpha_{d+1}}$ of $A$ equals $1$.  Jurkat and Ryser~\cite{YR} were the first to note that the number of transversals in a latin hypercube $Q$ coincides with the permanent of the corresponding polystochastic matrix $A$. 

The well-known Birkhoff theorem  (see, e.g., \cite{Mink})  states that every doubly stochastic matrix has a positive permanent and is a convex combination of  some permutation matrices. 

However, for $d \geq 3$ there exist $d$-dimensional polystochastic matrices with zero permanent, even when the matrix is a multidimensional permutation.    For example,    latin squares corresponding to  the Cayley table of  groups   $\mathbb{Z}_{n}$  of even order $n$ have no transversals, a fact known since Euler~\cite{euler}. 

This observation can be extended to latin hypercubes.   Let $\mathcal{Q}_n^d$ be  the $d$-dimensional  latin hypercube of order $n$ such that  $ q_{\alpha} \equiv \alpha_1 + \cdots + \alpha_d  \mod n$.  In~\cite{Wan}, Wanless showed that if $n$ and $d$ are  even,  then  the latin hypercube $\mathcal{Q}_n^d$ has no transversals. This implies  that the permanent of the  multidimensional permutation $\mathcal{M}_n^d$ corresponding to the latin hypercube $\mathcal{Q}_n^{d-1}$ is zero when $n$ is even and $d$ is odd.  Later, in~\cite{ChW} Child and Wanless proved that  modifications  of the matrix $\mathcal{M}_n^d$ in $r$ consequentive hyperplanes, where $r (r-1) < n$, also produce polystochastic matrices with zero permanent for even $n$  and odd $d$.

There are no known examples of latin squares of odd order with no transversals, and in 1967, Ryser~\cite{ryser} conjectured that every latin square of odd order has a transversal. Despite  Montgomery's  recent breakthrough~\cite{Mont}, proving that every latin square  of large  order $n$ has a near transversal (a partial transversal of length $n-1$), the Ryser's conjecture is still open. 

In~\cite{sun} Sun proved that latin hypercubes  $\mathcal{Q}_n^d$ of odd dimension $d$   have a transversal, and so all  matrices $\mathcal{M}_n^d$ of even dimension $d$ have a positive permanent. In that paper, he also conjectured that all $4$-dimensional permutations  have positive permanents. 

On the basis of these results and computer enumeration of latin hypercubes of small order and dimension~\cite{cencus}, Wanless conjectured the following.

\begin{conjecture}[Wanless,~\cite{Wan}] \label{latinhyp}
Every latin hypercube of odd order or odd dimension has a transversal. 
\end{conjecture}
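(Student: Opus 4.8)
The plan is to recast the conjecture, via the correspondence recalled above, as a positivity statement for permanents of multidimensional permutation matrices, and then to treat its two halves separately. If $Q$ is a $d$-dimensional latin hypercube of order $n$ and $A$ is the associated $(d+1)$-dimensional permutation matrix, then the transversals of $Q$ are exactly the diagonals of $A$ on which every entry equals $1$, and by the observation of Jurkat and Ryser their number equals $\per A$. Since $A$ is a $(0,1)$-matrix, the statement ``$Q$ has a transversal'' is equivalent to $\per A>0$. It therefore suffices to establish $\per A>0$ in two regimes: (i) $n$ odd with $d$ arbitrary, and (ii) $d$ odd, so that $A$ has even dimension $d+1$, with $n$ arbitrary. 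I parameterize a transversal of $Q$ by permutations $\sigma_1,\dots,\sigma_d$ of $\{0,\dots,n-1\}$; normalizing $\sigma_1=\mathrm{id}$, a transversal is a choice of $\sigma_2,\dots,\sigma_d$ for which $i\mapsto q_{\,i,\sigma_2(i),\dots,\sigma_d(i)}$ is a bijection of the symbol set.

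For regime (ii) the goal is to promote Sun's theorem, which yields $\per A>0$ for the single linear hypercube $\mathcal{Q}_n^d$, to all latin hypercubes of odd dimension. My approach is to show that $\per A$ is nonzero modulo a well-chosen prime $p$ by exploiting a symmetry available precisely because the dimension $d+1$ is even. Concretely, I would seek a fixed-point-free involution on the non-transversal diagonals of $A$, built from a coordinate symmetry of the ``defect'' of such a diagonal, so that these diagonals cancel in pairs modulo $p$ while the transversals survive; the even number $d+1$ of coordinates is the feature that should make such pairing consistent. Combined with a separate evaluation showing the leftover contribution of the transversals is nonzero modulo $p$, this would give $\per A\not\equiv 0\pmod p$ and hence $\per A>0$. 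The obstacle here is that a single involution must respect the latin constraints of an \emph{arbitrary} hypercube, not only the homogeneous ones, so this step is itself currently open beyond the structured cases treated by Sun.

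Regime (i) is the main obstacle, and where I expect the decisive difficulty, since its $d=2$ instance is exactly Ryser's conjecture, which remains unresolved; thus (i) is at least as hard as that notorious problem and cannot be expected to yield to a short argument. The natural line of attack is the polynomial method: encode the existence of a transversal as the non-vanishing of a polynomial $P$ over a prime field $\mathbb{F}_p$ with $p$ odd and $p>n$, built so that $P$ vanishes at the relevant grid points unless some $i\mapsto q_{\,i,\sigma_2(i),\dots,\sigma_d(i)}$ is a bijection, and apply the Combinatorial Nullstellensatz. A transversal then exists provided a distinguished top-degree coefficient of $P$ is nonzero in $\mathbb{F}_p$; this coefficient is a permanent-type quantity attached to $Q$, and the crux of the entire conjecture is to prove that it does not vanish when $n$ is odd. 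This nonvanishing is precisely the arithmetic phenomenon underlying both Ryser's conjecture and the Alon--Tarsi conjecture, and no method is known that secures it in general. The present paper sidesteps this obstacle by restricting to order $n=4$, where the finitely many local configurations can be classified directly, rather than attempting the full conjecture.
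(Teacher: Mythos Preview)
The statement you are attempting is Conjecture~\ref{latinhyp}, which the paper presents as an \emph{open} conjecture due to Wanless; there is no proof of it in the paper to compare your proposal against. The paper only establishes partial cases (orders $2,3,4,5$ via the cited works) and then proceeds to the stronger Conjecture~\ref{mainhyp} for polystochastic matrices of order $4$, which is the actual main result (Theorem~\ref{poly4complchar}).

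Your proposal is not a proof and, to your credit, you do not pretend otherwise: you explicitly identify that regime~(i) contains Ryser's conjecture as the case $d=2$, and that the nonvanishing of the relevant Nullstellensatz coefficient is exactly the unresolved arithmetic obstruction. That diagnosis is accurate. The sketch for regime~(ii) is also honest in flagging that the desired involution on non-transversal diagonals is not known for arbitrary hypercubes; Sun's result covers only the single linear hypercube $\mathcal{Q}_n^d$, and no general parity or sign-reversing argument of the kind you describe has been found. So both halves of your plan terminate at acknowledged open problems, and the proposal should be read as a survey of obstacles rather than a proof attempt. Your closing remark that the present paper sidesteps all of this by fixing $n=4$ and classifying local configurations is exactly right.
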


This conjecture is trivial for latin hypercubes of order $2$ and easy to prove for order $3$~\cite{AAT16}.  In~\cite{AAT18}, Taranenko showed that, except for the hypercube $\mathcal{Q}_4^d$ of even dimension $d$,   all latin hypercubes of order $4$ have a transversal, and  Perezhogin, Potapov, and Vladimirov in~\cite{PPV} proved that all latin hypercubes of order $5$ have a transversal. 

 In~\cite{AAT16}, Taranenko extended Conjecture~\ref{latinhyp} from multidimensional permutations to all polystochastic matrices.

\begin{conjecture}[Taranenko,~\cite{AAT16}] \label{mainhyp}
The permanent of every  polystochastic matrix of odd order $n$ or even dimension $d$ is positive.
\end{conjecture}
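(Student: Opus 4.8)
The plan is to recast the statement combinatorially and then attack the two regimes --- odd order $n$ and even dimension $d$ --- through a common parity mechanism. Observe first that $\per A>0$ if and only if $\supp A$ contains a positive diagonal, so the assertion depends only on $\supp A$, and the polystochastic hypothesis says precisely that $A$ is a fractional perfect matching of the $d$-partite $d$-uniform hypergraph $H$ whose hyperedges are the cells of $\supp A$: the entries are nonnegative and sum to $1$ along every line. Thus I would reformulate the conjecture as the statement that a line-regular fractional perfect matching of such an $H$ forces an integral one whenever $n$ is odd or $d$ is even. For $d=2$ this is König's theorem, while the matrices $\mathcal{M}_n^d$ of Wanless show that integrality genuinely fails for odd $d$ and even $n$; hence any proof must isolate and exploit the parities of $d$ and $n$.

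Assuming for contradiction that $\per A=0$, I would study the minimal obstruction to a positive diagonal. By a Hall/deficiency-type argument this failure is witnessed by a trade-like configuration inside $\supp A$ --- a \emph{unitrade} in the language of this paper --- formed by two families of diagonals covering the same cells with opposite orientations, the guiding examples being exactly the modifications of $\mathcal{M}_n^d$ of Child and Wanless. The key quantity I would attach is a $\mathbb{Z}_2$-invariant: identify each diagonal with the $(d-1)$-tuple $(\sigma_2,\dots,\sigma_d)$ it determines after normalizing the first coordinate, and weigh it by $\prod_{i=2}^{d}\operatorname{sgn}(\sigma_i)$. Summing these weights over the diagonals of the obstruction gives an alternating (Alon--Tarsi--Glynn type) permanent, which I would evaluate modulo $2$ --- equivalently, count the positive diagonals of $A$ modulo $2$.

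For even dimension $d$ the number $d-1$ of sign factors is odd, and I expect the sign-reversing maps acting on the obstruction to permute its diagonals while multiplying the alternating weight by $(-1)^{d-1}=-1$ without fixed points, forcing the count of positive diagonals to be \emph{odd} and hence nonzero, contradicting $\per A=0$. Making this rigorous is the content of the even-dimensional case: one must show that the relevant Alon--Tarsi/Glynn coefficient is nonvanishing over $\mathbb{F}_2$ exactly when $d$ is even, and then lift from the $(0,1)$ case to arbitrary polystochastic matrices using that enlarging a support can only create diagonals --- so it suffices to treat supports minimal among those carrying a line-regular fractional perfect matching.

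The hard part is the odd-order case, and I do not expect a complete resolution by these means: its instance $d=3$ is precisely the statement that every latin square of odd order has a transversal, i.e. Ryser's conjecture, which is open. For even order the analogous parity count is \emph{even} (Balasubramanian's theorem on transversals), consistent with the vanishing examples, so the ``odd'' direction needed for odd $n$ cannot follow from a symbol-swap involution alone and must rest on finer arithmetic of the $\mathbb{F}_2$ Alon--Tarsi--Glynn invariant, known only for special orders. The realistic targets are therefore the even-dimensional regime and small fixed orders; indeed the structural classification of unitrades carried out in this paper settles order $4$, where the $\mathbb{Z}_2$-invariant can be computed directly, while the general odd-order conjecture remains the principal obstacle, of the same difficulty as Ryser's conjecture.
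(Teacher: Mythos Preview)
The statement you are attempting to prove is \emph{Conjecture}~\ref{mainhyp}: the paper does not prove it. The paper establishes only the special case $n=4$ (Theorem~\ref{poly4complchar}), by methods entirely different from what you sketch --- a computer-aided classification of admissible $3$-dimensional planes, reduction to sesquialteral permutations, a bitrade argument, and then an analysis of convex sums of matrices equivalent to $\mathcal{M}_4^d$. So there is no ``paper's own proof'' of the full conjecture to compare against, and you correctly observe that the odd-order case already for $d=3$ is Ryser's conjecture, which is open.

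Your even-dimension argument, however, has a genuine gap. You assert that an Alon--Tarsi/Glynn-type alternating count, taken modulo $2$, will show that the number of positive diagonals of $A$ is odd whenever $d$ is even, via a sign-reversing involution with no fixed points. But you never construct such an involution, and the claim it would establish --- that every $d$-dimensional polystochastic matrix with $d$ even has an \emph{odd} number of positive diagonals --- is strictly stronger than the conjecture and is not known. Concretely, when $A$ is a multidimensional permutation this would say that every latin hypercube of odd dimension has an odd number of transversals; even the existence of one transversal (Conjecture~\ref{latinhyp}) is open. The sign $\prod_{i=2}^d \operatorname{sgn}(\sigma_i)$ you introduce is an invariant of diagonals of $I_n^d$, not of the support of $A$, so a fixed-point-free involution on all diagonals says nothing about which of them lie in $\supp A$; and if instead you restrict to diagonals in $\supp A$, there is no reason your involution should preserve that set. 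The reduction ``enlarging a support can only create diagonals, so it suffices to treat minimal supports'' is also unjustified: minimal supports of polystochastic matrices of order $n$ are not classified, and for $n\ge 3$ they need not be multidimensional permutations. In short, the parity mechanism you invoke is a heuristic, not an argument, and the even-dimension case of the conjecture remains open beyond the small orders handled by direct structural analysis.
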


It is straightforward  to show that all polystochastic matrices of order $2$ with zero permanent are matrices $\mathcal{M}_2^d$ with odd $d$ (see~\cite{AAT16}). It is also proved in~\cite{AAT16} that every polystochastic matrix of order $3$ has a positive permanent. Finally, in~\cite{AAT20}, Taranenko proved that  the permanent of every $4$-dimensional polystochastic matrix of order $4$ is positive. 

In the present paper, we prove  that for even $d$, every $d$-dimensional  polystochastic   of order $4$ has a positive permanent, and for odd $d$, we show that $d$-dimensional polystochastic matrices of order $4$ with zero permanent are either equivalent to $\mathcal{M}_4^d$ or matrices constructed by Child and Wanless in~\cite{ChW}. Thus, we confirm Conjecture~\ref{mainhyp} for polystochastic matrices of order $n = 4$.

\section{Definitions,  preliminary results, and the proof outline}

\subsection{Planes, support, and classes of polystochastic matrices}

Let $A = (a_{\alpha})_{\alpha \in I_n^d} $   be a $d$-dimensional matrix of order $n$, where $I_n^d =\{ \alpha = (\alpha_1, \ldots, \alpha_d)$, $\alpha_i \in \{ 0, \ldots n-1\}\} $ is the index set of $A$.     Given a matrix $A$   and  $1 \leq i_1 < \cdots < i_{d-k} \leq d$, we denote by $A^{\alpha_1,\dots,\alpha_{d-k}}_{i_1,\dots,i_{d-k}}$ the $k$-dimensional plane of \textit{direction} $(i_1,  \ldots,  i_{d-k})$ obtained fixing index positions $i_1,\dots,i_{d-k}$ by values $\alpha_1,\dots,\alpha_{d-k}$, respectively.   If $S$ is a subset of indices of $A$, then the $k$-dimensional plane $S^{\alpha_1,\dots,\alpha_{d-k}}_{i_1,\dots,i_{d-k}}$  is the intersection of $S$ and the $k$-dimensional plane $A^{\alpha_1,\dots,\alpha_{d-k}}_{i_1,\dots,i_{d-k}}$. In particular,  a hyperplane $S_i^{\alpha_i}$ is the intersection of $S$ and the hyperplane $A_i^{\alpha_i}$.

We will say that two $k$-dimensional planes are \textit{parallel} if they have the same direction, and parallel planes 
$A_{i_1,\dots,i_{d-k}}^{\alpha_1,\dots,\alpha_{d-k}}$ and
$A_{i_1,\dots,i_{d-k}}^{\beta_1,\dots,\beta_{d-k}}$  are {\it diagonally located} if   $\beta_j\neq \alpha_j$ for all $j = 1, \ldots, d-k$.   Let $(\Gamma_0, \ldots, \Gamma_{n-1})$ denote the multidimensional matrix of order $n$ with parallel hyperplanes $\Gamma_0, \ldots, \Gamma_{n-1}$.  

Two matrices are  called {\it equivalent} if they are obtained from each other by permutations of positions of indices or by permutations of hyperplanes of one direction.

In this paper, we mostly consider  multidimensional matrices of order $4$. Let us describe  partitions of such matrices into submatrices of order $2$.  

Firstly, note that there are three different partitions of the set $\{ 0, 1, 2, 3\}$ into two unordered parts, each consisting of two elements. We label them as
$$\mathcal{P}_1 = 01 | 23; ~~~ \mathcal{P}_2  = 02 | 13;  ~~~ \mathcal{P}_3 = 03 | 12;$$
that is in partition $\mathcal{P}_i$ elements $i$  and $0$ are in the same part. 

To every partition  $\mathcal{P}_i$, $i = 1, 2,3$, we assign a function $p_i: \{0, 1, 2,3 \} \rightarrow \{ 0,1\}$ so that $p_i(a)$ stands for  a part of $\mathcal{P}_i$  contaning $a$. Without loss of generality, we assume that $p_i (0) = 0$ for all partitions $\mathcal{P}_i$. Then 
\begin{gather*}
p_1(0) = p_1(1) = 0; ~~~ p_1 (2) = p_1(3) = 1; \\
p_2(0) = p_2(2) = 0; ~~~ p_2 (1) = p_2(3) = 1; \\
p_3(0) = p_3(3) = 0; ~~~ p_3 (1) = p_3(2) = 1. 
\end{gather*}

Then the tuple  $\mathcal{E} \in \{ 1,2,3\}^d$, $\mathcal{E} = (\varepsilon_1, \ldots, \varepsilon_d)$, defines a \textit{partition} of a $d$-dimensional  matrix of order $4$ into $2^d$ copies of $d$-dimensional \textit{subcubes} $C_y$, $y \in \{0, 1 \}^d$ of order $2$, where $\mathcal{P}_{\varepsilon_i}$ is the partition of coordinates in the $i$th position.   In particular, each $C_y$ is composed of indices $\alpha$ such that $y_i = p_{\varepsilon_i} (\alpha_i)$ for all $i \in \{ 1, \ldots, i\}$.

The \textit{support} ${\rm supp}(A)$ of a matrix $A$ is the set of all
indices $\alpha$ for which $a_\alpha  \neq 0$.  In what follows, we will denote elements of matrices from the support by $\bullet$ and elements that do not belong to the support by $\circ$.  For a nonnegative multidimensional matrix (or verctor) $A$, the \textit{weight} $w(A)$ is the sum of all entries of $A$.

Recall that a \textit{multidimensional permutation}  is a $(0,1)$-polystochastic matrix, i.e.,  each line of the matrix contains exactly one nonzero entry.   If a polystochastic matrix $A$ has entries only from  the  set $\{0, \nicefrac{1}{2}\}$  (i.e.,  each line of $A$ contains exactly two nonzero entries), then we call it a \textit{double permutation}. We will say that a polystochastic matrix $A$ is a   \textit{sesquialteral permutation}   if every line of $A$ contains no more than two nonzero entries.  Note that multidimensional permutations and double permutations are  sesquialteral permutations. 

If $A$ is a sesquialteral permutation, then we can replace all entries $a_\alpha$ such that $0 < a_\alpha <1$ by $\nicefrac{1}{2}$ and obtain a polystochastic matrix  $B$ with the same support and  all entries  equal $0$, $1$ or $\nicefrac{1}{2}$.

We will call  a  multidimensional  matrix $A$ with nonnegative entries a {\it stochastic matrix} if the sum of its entries over  lines  of every direction, except for the first one, is equal to $1$. Equivalently,  all hyperplanes of the first direction of $A$ are polystochastic matrices, but the matrix $A$ is not necessarily polystochastic.

The definition of permanent and diagonally located planes imply  that for every $d$-dimensional matrix $A$ of order $n$ it holds
\begin{equation*}
{\rm per}(A)=\sum\limits_{(\alpha_1,\dots,\alpha_n)} {\rm
per}(A^{\alpha_1}_{i_1,\dots,i_k}, \dots,A^{\alpha_n}_{i_1,\dots,i_k}),
\end{equation*}
where sum is taken over all pairwise diagonally located $k$-dimensional planes $A^{\alpha_1}_{i_1,\dots,i_k},\dots,A^{\alpha_n}_{i_1,\dots,i_k}$ of direction $(i_1, \ldots, i_k)$. The  proof of this formula can  also be  found in \cite{AAT16}.  It yields the following statement.

\begin{proposition} \label{perreduction} 
Let $A$ be a$d$-dimensional  polystochastic  matrix of order $n$ and $1 \leq k \leq d-1$. 
\begin{enumerate}
\item The permanent of the matrix $A$ is  positive  if and only if there exist pairwise  diagonally located  $k$-dimensional planes  $A^{\alpha_1}_{i_1,\dots,i_k},\dots,A^{\alpha_n}_{i_1,\dots,i_k}$ of direction $(i_1, \ldots, i_k)$ such that the permanent of the stochastic matrix $(A^{\alpha_1}_{i_1,\dots,i_k},\dots,A^{\alpha_n}_{i_1,\dots,i_k})$ is positive. 
\item The permanent of the matrix $A$ is zero if and only if for all  pairwise diagonally located $k$-dimensional  planes $A^{\alpha_1}_{i_1,\dots,i_k},\dots,A^{\alpha_n}_{i_1,\dots,i_k}$ of direction $(i_1, \ldots, i_k)$ we have that the permanent of  stochastic matrices  $(A^{\alpha_1}_{i_1,\dots,i_k},\dots,A^{\alpha_n}_{i_1,\dots,i_k})$  is equal to zero. 
\end{enumerate}
\end{proposition}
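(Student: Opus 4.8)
The plan is to read off both statements directly from the displayed expansion
\begin{equation*}
\per(A)=\sum_{(\alpha_1,\dots,\alpha_n)}\per(A^{\alpha_1}_{i_1,\dots,i_k},\dots,A^{\alpha_n}_{i_1,\dots,i_k}),
\end{equation*}
which is already in hand (it is proved in \cite{AAT16}) and whose outer summation runs over exactly the same objects that appear in the proposition, namely over all $n$-tuples of pairwise diagonally located $k$-dimensional planes of direction $(i_1,\dots,i_k)$. Thus the collection ranged over by the proposition's quantifiers (``there exist'' in item~1, ``for all'' in item~2) coincides with the index set of the sum, and no combinatorial rearrangement is needed.

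Next I would check that each term of the sum is the permanent of a \emph{nonnegative} stochastic matrix. Stacking the $n$ parallel planes $A^{\alpha_1}_{i_1,\dots,i_k},\dots,A^{\alpha_n}_{i_1,\dots,i_k}$ produces a $(k+1)$-dimensional matrix $B$ of order $n$ all of whose entries are entries of $A$; every line of $B$ contained in one of the constituent planes is a line of $A$, hence has sum $1$, so all hyperplanes of $B$ of the stacking direction are polystochastic and $B$ is a stochastic matrix in the sense defined above (with the stacking direction taken as the first one). Since $A$ is polystochastic, its entries, and therefore the entries of every such $B$, are nonnegative, so $\per(B)=\sum_{p\in D(B)}\prod_{\beta\in p}b_\beta$ is a sum of products of nonnegative reals and is itself nonnegative. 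In particular $\per(A)\ge 0$.

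It then remains to invoke the elementary fact that a finite sum of nonnegative reals is positive if and only if at least one summand is positive, and is zero if and only if every summand is zero. Applied to the expansion above, the first alternative yields item~1 and the second yields item~2; since the two conditions are logically complementary, the two items are in fact equivalent reformulations of one another. The whole argument is thus essentially bookkeeping once the expansion formula is granted; the only point that deserves a line of care — and the nearest thing to an obstacle — is confirming that the stacked family $(A^{\alpha_1}_{i_1,\dots,i_k},\dots,A^{\alpha_n}_{i_1,\dots,i_k})$ really is a stochastic matrix and that the index set of the sum is precisely the set of pairwise diagonally located $n$-tuples named in the statement, both of which follow straight from the definitions.
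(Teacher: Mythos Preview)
Your argument is correct and matches the paper's own approach: the paper simply records the expansion formula (citing \cite{AAT16}) and states that it ``yields'' the proposition, leaving implicit exactly the nonnegativity-of-terms observation that you spell out. Your extra sentence verifying that the stacked array is a stochastic matrix in the paper's sense is a welcome clarification but not a departure from the intended proof.
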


\subsection{Unitrades and bitrades} 
 
 Following \cite{KP19},  we will say that a collection of indices $U$, $U \subset I_n^d$, is a  \textit{unitrade}, if every line of a $d$-dimensional matrix of order $n$ contains zero
 or two elements from $U$.  A unitrade $U$ is called  a \textit{bitrade} if there is a sign function  $\sigma:U\rightarrow\{\pm 1\}$ such that for every $\alpha, \beta$ from the same line it holds $\sigma(\alpha)\neq \sigma(\beta)$.  In other words,  if we consider a unitrade  $U$ as a vertex set of a graph, where two elements are adjacent if and only if they are from the same line,  then a  bitrade is a bipartite unitrade. A unitrade  $U$ is called {\it connected} if it  is not a union smaller unitrades, i.e.,  the  graph corresponding  to the unitrade  is connected.

The definitions imply that for every sesquialteral permutation $A$ the set of indices $U(A) = \{  \alpha | 0 < a_{\alpha} < 1\}$ is a unitrade.  Note that if $U(A)$ is bitrade with a sign function $\sigma$, then there is a multidimensional permutation $M$ such that $\supp(M) \subseteq \supp(A)$, $\supp(M) = \{ \alpha \in \supp(A) : a_{\alpha} = 1 \mbox{ or } \sigma(\alpha) = 1 \}$. By the induction on the size of the support of $A$, we get that in this case the matrix  $A$ is a convex combination of multidimensional permutations.   

For the future, we need the following sufficient condition for a double permutation to be a bitrade.

\begin{lemma}\label{thPPTV1}
Let $A$ be a $d$-dimensional double permutation of order $n$. If there exists $k \geq 2$ such that  the support of all $k$-dimensional and $(k+1)$-dimensional planes of $A$ are connected bitrades, then the  support $U$ of the matrix  $A$ is a connected bitrade. 
\end{lemma}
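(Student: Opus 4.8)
The plan is to induct on $d$, with the hypothesis supplying the base of the induction. Fix $k\ge 2$ and suppose all $k$- and $(k+1)$-dimensional planes of $A$ have support that is a connected bitrade. I want to show $U=\supp(A)$ is a connected bitrade, i.e. (i) the ``line graph'' $G$ on $U$ (two indices adjacent iff they share a line) is connected, and (ii) $G$ is bipartite. For (i): since $A$ is a double permutation every line through a vertex of $U$ contains exactly one other vertex of $U$, so $G$ is nonempty on every occupied line; connectivity of $G$ will follow by stitching together the connected pieces. The natural way to stitch is to use the $(k+1)$-dimensional planes: each such plane's support is connected by hypothesis, and two parallel $(k+1)$-planes that are not diagonally located share a common $k$-plane (again connected, hence nonempty in $U$ provided it meets $U$), which lets one walk between them; varying the $d-(k+1)$ fixed coordinates one at a time, and using that consecutive values give $(k+1)$-planes overlapping in a $k$-plane, one traverses all of $I_n^d$. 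One must check the shared $k$-plane actually contains a point of $U$ — this holds because any nonempty $k$-plane support is a connected bitrade and in particular nonempty once the ambient planes are nonempty; a short argument is needed that $U$ meets every relevant plane, which follows from $A$ being a (nonzero) double permutation: every hyperplane, and inductively every plane of dimension $\ge 1$, carries positive weight and hence meets $U$.

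For bipartiteness (ii), the key point is that a graph is bipartite iff it has no odd cycle, and every cycle in $G$ can be ``localized'' into low-dimensional planes where bipartiteness is already known. Concretely, take any cycle $\alpha^0,\alpha^1,\dots,\alpha^m=\alpha^0$ in $G$; consecutive vertices share a line, so they agree in $d-1$ coordinates. I would argue that any such closed walk can be decomposed, via the overlapping-planes structure used for connectivity, into closed walks each lying inside a single $k$- or $(k+1)$-dimensional plane. Since each such plane's support is a \emph{bitrade} by hypothesis, it has a proper $2$-colouring $\sigma$; these local colourings must be consistent on the overlaps (two colourings of a connected bitrade agreeing on one vertex agree everywhere, because the colour is forced along edges, and a nonempty $k$-plane is connected and shared), so they glue to a global proper $2$-colouring of $G$. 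Equivalently: define $\sigma(\alpha)$ by fixing $\sigma(\alpha^\ast)=+1$ at a base point and propagating along a spanning tree of $G$; consistency around every cycle reduces, by the decomposition, to consistency within individual $k$- or $(k+1)$-planes, which holds because those are bitrades.

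Here is the cleaner way I would actually organize it, to avoid a messy cycle-decomposition argument. First handle $k+1 < d$: show that if the conclusion holds for $(d-1)$-dimensional matrices it holds for $d$-dimensional ones, so that the genuine content is the case $k+1=d$, i.e. $A$ itself is $(k+1)$-dimensional and we are told all of its hyperplanes (the $k$-planes) are connected bitrades and $A$'s full support is also given — wait, that is not quite the hypothesis. So instead: fix a direction, say the $d$-th, and look at the $n$ hyperplanes $A_d^0,\dots,A_d^{n-1}$; each is $(d-1)$-dimensional, and each of \emph{their} $k$- and $(k+1)$-planes is a $k$- or $(k+1)$-plane of $A$, hence a connected bitrade by hypothesis (using $k+1\le d-1$; the case $k+1=d$ is treated separately and more directly). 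By induction each hyperplane support $U_d^i$ is a connected bitrade with its colouring $\sigma_i$. It remains to (a) see that the $U_d^i$ are glued into a connected graph across the $d$-th direction, and (b) see that the $\sigma_i$ are mutually consistent. For (a) and (b), use a $(k+1)$-plane $P$ that is \emph{not} contained in any single $A_d^i$ — such a plane meets several hyperplanes $A_d^i$, is a connected bitrade, so its colouring simultaneously extends and reconciles the $\sigma_i$ it touches; since such planes connect all of the $i$'s (one can move the fixed $d$-coordinate by $\pm1$ while staying inside $\supp$ via connectivity of $P\cap U$), everything glues.

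The main obstacle I anticipate is (b)/(ii), the consistency of the local colourings — i.e., ruling out an odd cycle that ``escapes'' every single $k$- or $(k+1)$-plane. The hypothesis $k\ge 2$ is presumably exactly what is needed: with planes of dimension $\ge 2$, any pair of edges of $G$ at a vertex lies in a common $2$-dimensional coordinate plane (two lines through a point span a $2$-plane), and by padding up to dimension $k$ one gets that any \emph{short} cycle sits in a $k$-plane; then a general cycle is contracted to short ones using the overlap structure of parallel $(k+1)$-planes. Making the ``contraction'' rigorous — showing every cycle in $G$ is, in the appropriate sense, a sum of cycles each inside a single $k$- or $(k+1)$-plane — is the technical heart and is where I would spend the most care, probably phrasing it as: the union of all (connected) $(k+1)$-plane supports, with the natural incidences, is simply connected, so $\pi_1$ of the line graph is generated by the loops inside individual planes.
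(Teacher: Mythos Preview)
Your overall framework is right: prove bipartiteness by gluing the local $2$-colourings of lower-dimensional planes, using connectedness of intersections to force agreement on overlaps. You also correctly identify the crux: after choosing colourings $\sigma_i$ of the hyperplanes $A_d^i$ and reconciling them via one transverse $(k{+}1)$-plane $P$, you must still verify that the resulting global function $\mu$ is a sign function on lines \emph{in direction $d$} --- and this is exactly where your argument stops. A single $P$ tells you $\mu(\alpha)\neq\mu(\beta)$ only for the direction-$d$ lines that lie in $P$; for any other such line you would need a different $P'$, and you have not shown that the reconciliation determined by $P'$ agrees with the one determined by $P$. Your proposed fixes (cycle decomposition, simple connectivity of the nerve of $(k{+}1)$-planes) are not wrong in spirit, but as written they are programmes rather than proofs, and carrying them out is not obviously easier than the lemma itself.

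The paper closes this gap with a concrete and short device that you are missing. Rather than inducting from $d{-}1$ to $d$, it steps from $k{+}1$ to $k{+}2$: it suffices to show every $(k{+}2)$-dimensional plane $S$ is a bitrade. Inside $S$ one takes \emph{two} transverse hyperplane families, say $\{S_2^i\}_i$ and $\{S_3^j\}_j$ (each a connected bitrade by hypothesis), together with a single reference hyperplane $S_1^0$ with colouring $\gamma$. The colourings $\sigma_i$ of $S_2^i$ and $\delta_j$ of $S_3^j$ are each aligned with $\gamma$ on the connected $k$-planes $S_{1,2}^{0,i}$ and $S_{1,3}^{0,j}$. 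The point is that $\sigma_i$ and $\delta_j$ are then \emph{both} defined on the connected $k$-plane $S_{2,3}^{i,j}$, and they agree there because they already agree with $\gamma$ on the triple intersection $S_{1,2,3}^{0,i,j}$, a $(k{-}1)$-plane which is nonempty in the support precisely because $k\ge 2$. Setting $\mu=\sigma_i=\delta_j$ on $S_{2,3}^{i,j}$ gives a well-defined function on $S$, and now any line of $S$ lies inside some $S_2^i$ or some $S_3^j$ (it cannot vary in both coordinates $2$ and $3$), so $\mu$ is a sign function. That triple-intersection step is the missing idea in your write-up, and it is also the precise place where the hypothesis $k\ge 2$ is used.
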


\begin{proof}
By the definition, the support $U$ of a double permutation $A$ is a unitrade. 
It is easy to see that if  all $k$-dimensional planes of the matrix $A$ contain a connected unitrade, then the whole  unitrade $U$ is also connected.  

 So it remains to prove that $U$ is a bitrade. It is sufficient to show that  the support of every  $(k+2)$-dimensional plane of $A$ is bitrade, since then we can repeat the same reasoning till $k = d-2$. 
  
Let  a unitrade $S$ be the support of a $(k+2)$-dimensional plane in $A$. By the conditions, each  hyperplane  of $S$  (a $(k+1)$-dimensional plane in $A$) is a connected bitrade. 

Fix some hyperplane $S_1^0$  of the first direction in the unitrade $S$ and consider all hyperplanes $S_2^i$ and  $S_3^j$ in $S$ of the second and third directions. Since $S_1^0$,  $S_2^i$, and  $S_3^j$ are connected bitrades, there exist  sign functions $\gamma$, $\sigma_i$, and $\delta_j$, respectively.  Since all $k$-dimensional planes $S_{1,2}^{0,i}$  and $S_{1,3}^{0,j}$ are also connected bitrades, we can choose sign functions $\sigma_i$ and $\delta_j$  so that they coincide with  $\gamma$  on $S_{1,2}^{0,i}$ and $S_{1,3}^{0,j}$, respectively.

 Consider $k$-dimensional planes  $S_{2,3}^{i,j}$, $i,j \in \{ 0, \ldots, n-1\}$. Since $k \geq 2$, we see that   the $(k-1)$-plane  $S_{1,2,3}^{0,i,j}$ obtained as an intersection of $S_{2,3}^{i,j}$ with the hyperplane $S_1^0$, is a non-empty intrade.   By the choice of $\sigma_i$ and $\delta_j$, we have that  sign functions $\sigma_i$, $\delta_j$, and $\gamma$ coincide on $S_{1,2,3}^{0,i,j}$.  Since  all $S_{2,3}^{i,j}$ are connected bitrades, we obtain that we can take either $\sigma_i$ or $\delta_j$ as their sign functions. 

Define  a function $\mu : S\rightarrow\{\pm 1\}$ so that  $\mu(\alpha)=\sigma_i(\alpha)=\delta_j(\alpha)$ if $\alpha \in S_{2,3}^{i,j}$. Let us show that $\mu$ is a sign function, and so $S$ is a bitrade. Consider two indices $\alpha, \beta \in S$   that are different in one position. Then they both belong to some hyperplane $S^i_2$ or $S^j_3$.  If $\alpha, \beta \in S^i_2$, then by the construction $\mu(\alpha) = \sigma_i(\alpha) \neq  \sigma_i(\beta)  = \mu(\beta)$. Similarly, if $\alpha, \beta \in S^j_3$, then  $\mu(\alpha) = \delta_j(\alpha) \neq  \delta_j(\beta)  = \mu(\beta)$. Therefore, $\mu$ is a sign function. 
\end{proof}

\subsection{Main result and proof outline}

Recall that  $\mathcal{M}_n^d$ is  the $d$-dimensional permutation of order $n$ such that $m_{\alpha} = 1$ if $\alpha_1 + \cdots + \alpha_d \equiv 0 \mod n$. Denote by $\mathcal{L}_n^d$ the family of $d$-dimensional polystochastic matrices of order $n$ obtained as a convex sum $\lambda  \mathcal{M}_n^d+ (1 - \lambda) M$, $0 < \lambda <1$, where $M$ is the matrix equivalent to $\mathcal{M}_n^d$ such that  ${\rm supp}(M)=\{ \alpha: \alpha_1+\cdots+\alpha_{d-1}+\pi(\alpha_d)\equiv 0 \mod n\}$, $\pi$ is the transposition $(01)$.

It is not hard to prove (see~\cite{ChW, Wan}) that  matrices  $\mathcal{M}_n^d$ and $\mathcal{L}_n^d$ have zero permanent if $d$ is odd and $n$ is even. All  known examples of $d$-dimensional polystochastic matrices of order $4$ with zero permanent have odd dimension $d$ and  are equivalent to $\mathcal{M}_4^d$ or some matrix from $\mathcal{L}_4^d$. 

The main result of the present paper is that $\mathcal{M}^d_4$ and matrices from the family $\mathcal{L}_4^d$ of odd dimension $d$ are the unique (up to equivalence)  polystochastic matrices of order $4$ with zero permanent.

\begin{theorem}\label{poly4complchar}
Let $A$ be a $d$-dimensional polystochastic matrix of order $4$ such that $\per A = 0$. Then $d$ is odd and $A$ is equivalent to  $\mathcal{M}^d_4$ or  some matrix from $\mathcal{L}_4^d$.
\end{theorem}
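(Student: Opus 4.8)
The plan is to argue by induction on $d$, proving the two halves of the theorem simultaneously: for even $d$ that $\per A>0$, and for odd $d$ the stated characterization. The base of the induction collects the small‑dimensional facts — positivity of the permanent for $4$-dimensional polystochastic matrices of order $4$ from~\cite{AAT20}, the order‑$2$ and order‑$3$ results, and a direct treatment of dimension $3$ — together with the classification of order‑$4$ multidimensional permutations with zero permanent (equivalently, latin hypercubes of order $4$ with no transversal) from~\cite{AAT18, Wan}, which says that such a permutation is, up to equivalence, $\mathcal{M}_4^d$ with $d$ odd.

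For the inductive step, suppose $\per A=0$. Fixing a direction $i$, the hyperplanes $\Gamma_0,\dots,\Gamma_3 = A_i^0,\dots,A_i^3$ are $(d-1)$-dimensional polystochastic matrices of order $4$, and by the second part of Proposition~\ref{perreduction} every stochastic matrix obtained from a diagonally located arrangement of $\Gamma_0,\dots,\Gamma_3$ has zero permanent; the induction hypothesis then constrains each $\Gamma_j$ (positive permanent when $d-1$ is even, one of the listed shapes when $d-1$ is odd). The heart of the argument is to upgrade this, using the order‑$4$ structure, to the statement that $A$ is a sesquialteral permutation whose unitrade $U(A)=\{\alpha:0<a_\alpha<1\}$ is a bitrade. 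To this end I would fix a partition tuple $\mathcal{E}\in\{1,2,3\}^d$ as in Section~2.1, split $A$ into $2^d$ order‑$2$ subcubes $C_y$, and exploit that the matrix of subcube weights is, up to scaling, an order‑$2$ polystochastic matrix, hence a one‑parameter object; by tracking how a positive diagonal of $A$ must distribute among the subcubes and using the inductively guaranteed positive permanents of the hyperplanes to complete partial positive diagonals, one rules out any line with three or more nonzero entries and obtains local bitrade information on $2$- and $3$-dimensional sub‑planes. Lemma~\ref{thPPTV1} is then invoked to propagate the bitrade property from these small planes to the whole support. This structural step — a case analysis over the partition type $\mathcal{E}$ and over the admissible support patterns of low‑dimensional sub‑planes — is the main obstacle and where essentially all the work resides.

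Once $A$ is known to be a sesquialteral permutation with $U(A)$ a bitrade, the remark preceding Lemma~\ref{thPPTV1} writes $A=\sum_j\lambda_j M_j$ as a convex combination of multidimensional permutations $M_j$; since the entries are nonnegative, automatically $\supp(M_j)\subseteq\supp(A)$ for every $j$. As $\per A=0$, every diagonal contains an index outside $\supp(A)$, hence outside each $\supp(M_j)$, so $\per M_j=0$ for all $j$; by the base‑case classification each $M_j$ is equivalent to $\mathcal{M}_4^d$ and $d$ is odd. In particular, for even $d$ no such $A$ exists, because $\mathcal{M}_4^d$ has positive permanent in even dimension — this proves the even half by contradiction.

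It remains, for odd $d$, to single out which of these $A$ actually have zero permanent; note that $\per A=0$ must be used once more here, since a convex combination of permutations of zero permanent may have positive permanent (a diagonal can mix indices from different $M_j$). Because $A$ is a sesquialteral permutation, each line meets at most two of the pairwise non‑equal, $\mathcal{M}_4^d$-equivalent supports $\supp(M_j)$; a short analysis of how two equivalent copies of $\mathcal{M}_4^d$ may overlap, combined with the vanishing of $\per A$, shows that up to equivalence either a single copy occurs — whence $A$ is equivalent to $\mathcal{M}_4^d$ — or exactly the two copies defining the family $\mathcal{L}_4^d$ occur, and the convex decomposition then exhibits $A$ as a matrix from $\mathcal{L}_4^d$. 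Recalling (as in Section~2.3) that $\mathcal{M}_4^d$ and the matrices of $\mathcal{L}_4^d$ indeed have zero permanent for odd $d$ completes the proof.
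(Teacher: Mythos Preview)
Your outline matches the paper's overall architecture (reduce to sesquialteral permutations, show $U(A)$ is a bitrade, write $A$ as a convex combination of copies of $\mathcal{M}_4^d$, then classify such combinations), but the mechanisms you propose for each step do not work as written. For the sesquialteral reduction, the induction hypothesis gives essentially no structural information in the critical case ($d$ odd, hyperplanes of even dimension with positive permanent), and the subcube--weight observation you describe does not by itself force any subcube weight to be $0$ or $2^{d-1}$, which is what Proposition~\ref{blocks} would need; the paper instead proves Lemma~\ref{corPPTV2} via the computer-verified Claim~\ref{clPPTV3} and a rather delicate analysis of $2$-dimensional planes with three nonzero entries in a line. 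For the bitrade step, you cannot apply Lemma~\ref{thPPTV1} directly to $U$: its planes need not be \emph{connected} bitrades. The paper circumvents this by passing to the even completion $E(U)$, invoking the computer-verified list of $3$-dimensional planes in Claim~\ref{cl1}, and using Krotov's direct-sum decomposition (Proposition~\ref{PPTVprop06}) to reduce to connected pieces whose $2$- and $3$-dimensional planes are all of type $(H)$ or $(h)$; only then is Lemma~\ref{thPPTV1} applicable.

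The final step is also not ``short''. Your remark that a sesquialteral line meets at most two of the supports $\supp(M_j)$ does not bound the number $k$ of distinct $M_j$ (several $M_j$ can share the same two support positions on a line). The paper devotes all of Section~\ref{convexsumsec} to this: it introduces the block-permutation parameters $(\mathcal{E},\lambda,s)$ and the tessellation index, proves by an explicit diagonal construction (Lemma~\ref{addtofilled}) that adding a single entry to a filled subcube of $\mathcal{M}_4^d$ already yields positive permanent, and then uses Propositions~\ref{filledintersec}--\ref{nonewinfilled} (the last again by induction on $d$) to force $k\le 2$ and to pin the two copies down to either the $-\infty$ tessellation case (Proposition~\ref{nonintersect}, positive permanent) or the $\mathcal{L}_4^d$ configuration (Proposition~\ref{Ld4unique}). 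None of these ingredients is visible in your sketch.
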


The proof of this  theorem is by consecutive narrowing the set of matrices that can have the zero permanent. Firstly, in Section~\ref{reductionsection}, we show that only  sesquialteral permutations can have zero permanent.

\begin{lemma}\label{corPPTV2}
Let $A$ be a $d$-dimensional polystochastic matrix of order $4$. If there exists a line in $A$ that contains at least three nonzero entries, then $\per A > 0$.
\end{lemma}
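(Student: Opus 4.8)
The plan is to reduce the statement to producing a single positive diagonal and then to exploit the three positive entries of the given line via a Hall-type argument. Since $\per A=\sum_{p\in D(A)}\prod_{\alpha\in p}a_\alpha$ is a sum of nonnegative terms, $\per A>0$ is equivalent to $A$ having a positive diagonal, so it suffices to exhibit one. Permuting the positions and the values of the first coordinate, we may assume that the line with at least three positive entries is parallel to the first coordinate, based at a point $\beta=(\beta_2,\dots,\beta_d)$, and that $a_{0,\beta},a_{1,\beta},a_{2,\beta}>0$; write $\ell=A^{\beta_2,\dots,\beta_d}_{2,\dots,d}$ for this line. I look for a positive diagonal of $A$ passing through one of the points $(i^*,\beta)$, $i^*\in\{0,1,2\}$.

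By Proposition~\ref{perreduction} applied with $k=d-1$ to the lines parallel to the first coordinate (equivalently, straight from the definition of the permanent), such a diagonal amounts to a choice of three further lines $L^{(1)},L^{(2)},L^{(3)}$ parallel to the first coordinate whose base points $\delta^{(1)},\delta^{(2)},\delta^{(3)}$ are pairwise distinct in every coordinate and distinct from $\beta$ in every coordinate, together with a system of distinct representatives of $\ell,L^{(1)},L^{(2)},L^{(3)}$, i.e.\ positive entries occupying four distinct values of the first coordinate. Equivalently, letting $M$ be the $4\times4$ matrix whose columns are these four lines — a column-stochastic matrix, since every line of $A$ sums to $1$ — we must choose the base points so that $M$ admits a positive transversal.

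Now apply the König--Hall theorem. Because $\supp(\ell)\supseteq\{0,1,2\}$, the column of $M$ coming from $\ell$ meets at least three rows, so $M$ has no positive transversal only in one of the following restrictive situations: (i) $a_{3,\delta}=0$ for every $\delta\in\{\beta,\delta^{(1)},\delta^{(2)},\delta^{(3)}\}$; (ii) two of the lines $L^{(t)}$ have support equal to the same single value of the first coordinate; or (iii) the supports of the three lines $L^{(t)}$ are contained in a common pair of values. Hence it remains to choose the base points $\delta^{(1)},\delta^{(2)},\delta^{(3)}$ — which range over the ``diagonals'' of the order-$3$ box $\prod_{j=2}^{d}\bigl(\{0,\dots,3\}\setminus\{\beta_j\}\bigr)$ — so that none of (i)--(iii) holds; such diagonals of the box certainly exist.

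Suppose, for contradiction, that no admissible triple works, so that every diagonal of the box triggers one of (i)--(iii); I would then extract structure on $A$ forbidden by polystochasticity. For instance, if $a_{3,\delta}=0$ for every $\delta$ differing from $\beta$ in all of coordinates $2,\dots,d$, then each line of direction $2$ with first coordinate $3$ and remaining coordinates off $\beta$ has all but one entry zero, forcing $a_{3,\beta_2,\gamma_3,\dots,\gamma_d}=1$ whenever $\gamma_j\ne\beta_j$ for all $j\ge3$; but then the line of direction $3$ through $(3,\beta_2,\ast,\gamma_4,\dots,\gamma_d)$ carries three entries equal to $1$, which is impossible. The crux, and the step I expect to be the main obstacle, is to rule out the ``concentration'' obstructions (ii) and (iii) in the same spirit: one organizes the lines $A^{\delta_2,\dots,\delta_d}_{2,\dots,d}$ by their supports over the box $\prod_{j}(\{0,\dots,3\}\setminus\{\beta_j\})$ and shows that if (ii) or (iii) held for \emph{every} diagonal of the box then some line of $A$ would have sum different from $1$; this is a finite but somewhat intricate case analysis. (For $d=2$ the statement is Birkhoff's theorem and for $d=4$ it is the result of~\cite{AAT20}; the argument above is meant to cover all odd $d$, and one may equally well run it by induction on $d$, the core combinatorial claim about the order-$3$ box being the same at every step.)
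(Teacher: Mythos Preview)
Your proposal has a genuine gap: the central step is not carried out. You correctly reduce the problem to finding a diagonal $\{\delta^{(1)},\delta^{(2)},\delta^{(3)}\}$ of the order-$3$ box $\prod_{j=2}^d(\{0,\dots,3\}\setminus\{\beta_j\})$ for which the Hall obstructions (i)--(iii) all fail. But you then write that ruling out (ii) and (iii) ``is a finite but somewhat intricate case analysis'' and stop. That sentence is precisely where the proof has to happen, and it is not clear it is ``finite'' in any useful sense: the box has $3^{d-1}$ points and of the order of $6^{d-2}$ diagonals, so the obstruction pattern genuinely depends on $d$. Moreover, your sample argument for (i) assumes that $a_{3,\delta}=0$ for \emph{every} $\delta$ in the box; but the actual hypothesis is only that each diagonal triggers \emph{some} obstruction among (i)--(iii), and different diagonals may trigger different ones. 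Handling that mixture is the whole difficulty, and nothing in the proposal addresses it. The closing remark about ``running it by induction on $d$'' does not help either, since the box changes dimension with $d$ and you give no mechanism for descending.

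For comparison, the paper proceeds in a completely different way. It does not try to build a positive diagonal directly through the given line. Instead, assuming $\per A=0$, it looks at a $2$-dimensional plane $\Gamma$ containing the line with three nonzero entries, and invokes a computer-verified classification (Claim~\ref{clPPTV3}) of all pairs $(A,B)$ of doubly stochastic $4\times4$ supports that can occur as two diagonally located $2$-planes in a $3$-dimensional stochastic matrix of zero permanent (with the other two hyperplanes being permutation matrices). This forces $\supp(\Gamma)$ to be one of two explicit patterns $(A1)$ or $(A2)$, and the diagonally located $2$-planes to lie in short explicit lists. From there the paper derives global structural consequences --- certain hyperplanes must be $(0,1)$-matrices (via Proposition~\ref{01minor}), or a subcube of order $2$ must have weight $0$ or $2^{d-1}$ (via Proposition~\ref{blocks}) --- each of which contradicts the existence of a line with three nonzero entries. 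So the paper replaces your open-ended Hall analysis over a $(d{-}1)$-dimensional box with a fixed $2$-dimensional classification plus a short structural propagation; the price is the computer enumeration in Claim~\ref{clPPTV3}.
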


Next, we prove that if a  sesquialteral permutation $A$ of order $4$ has zero permanent, then the support of each $3$-dimensional plane of $A$ is equivalent to a  matrix from a certain list. Thus we reduce our consideration to some class of sesquialteral permutations that we call suspicious. 
The similar reduction was previously used for transversals in latin hypercubes of order $5$ in \cite{PPV}. 

Using restrictions on planes of suspicious sesquialteral permutations $A$, in Section~\ref{bitradesection} we show that the unitrade $U(A)$ is a bitrade.

\begin{lemma}\label{Ubitrade}
Let $A$ be a  suspicious  $d$-dimensional  sesquialteral permutation of order $4$ and  $U = U(A)$ be the unitrade of $A$. Then $U$ is a  bitrade. 
\end{lemma}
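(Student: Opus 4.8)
\textbf{Proof proposal for Lemma~\ref{Ubitrade}.}

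The plan is to apply Lemma~\ref{thPPTV1}, so the task reduces to finding a value of $k \geq 2$ such that for a suspicious sesquialteral permutation $A$ the supports of all $k$-dimensional and $(k+1)$-dimensional planes of $A$ are connected bitrades. The natural choice is $k = 2$: I would first argue that every $2$-dimensional plane of $A$ contributing to the unitrade $U(A)$ is, after restricting to its support, a small doubly stochastic object of order $4$, and then use the suspiciousness hypothesis — which constrains the support of every $3$-dimensional plane to lie on a known short list — to check the required connectedness and bipartiteness case by case. The first thing to establish, however, is that $A$ may be replaced by the $\{0, \nicefrac12, 1\}$-valued double/sesquialteral permutation $B$ with the same support (this replacement is described right before Lemma~\ref{thPPTV1}), since $U(A) = U(B)$ depends only on the support; so from now on all entries are $0$, $\nicefrac12$, or $1$.

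The core of the argument is a finite case analysis on the list of admissible $3$-dimensional plane supports that defines ``suspicious.'' For each admissible $3$-dimensional plane $P$, I would (i) verify that the sub-unitrade $U \cap P$ is connected — i.e. the line-intersection graph on those indices with $0 < \text{(entry)} < 1$ has no isolated pieces — and (ii) exhibit a sign function $\sigma_P$ on $U \cap P$, witnessing that it is a bitrade. Since the list of admissible planes is short and each such plane has order $4$ in three dimensions, both checks are finite. Two subtleties must be handled: first, a $3$-dimensional plane $P$ might have $U \cap P = \emptyset$ (all its nonzero entries equal $1$), in which case it is vacuously a bitrade but then it is \emph{not} connected in the sense needed; so I must be careful to apply Lemma~\ref{thPPTV1} only within the subcube structure where the unitrade actually lives, or equivalently argue that the connected components of $U(A)$ can be treated separately and each such component, when intersected with the planes through it, is genuinely a connected bitrade. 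Second, for the step from $k$ to $k+1$ in Lemma~\ref{thPPTV1} I also need every $4$-dimensional plane support to be a connected bitrade; but this follows from the $k=2$, $k=3$ inputs by a single application of Lemma~\ref{thPPTV1} itself, so it suffices to check dimensions $2$ and $3$ directly, with dimension $3$ being exactly the ``suspicious'' hypothesis and dimension $2$ being an easy classification of the possible support patterns of order-$4$ sesquialteral doubly stochastic matrices.

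So the concrete steps, in order, are: (1) reduce to the $\{0,\nicefrac12,1\}$-valued matrix $B$ with support $U$; (2) restrict attention to a single connected component of $U$ and to the minimal coordinate directions it spans, so that within that span every relevant $2$- and $3$-dimensional plane genuinely meets the unitrade; (3) classify the support patterns of $2$-dimensional planes of a sesquialteral permutation of order $4$ and check each is a connected bitrade whenever it is nonempty (the ``doubly stochastic $\{0,\nicefrac12\}$ of order $n$'' pictures — essentially cycles — are all bipartite, which is where bitrade-ness comes from); (4) invoke the suspiciousness hypothesis to conclude the same for $3$-dimensional planes; (5) apply Lemma~\ref{thPPTV1} with $k = 2$ to conclude $U$ is a connected bitrade on that span, hence $U(A)$ is a bitrade overall. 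The main obstacle I anticipate is step (2)/(3): making precise and painless the reduction that lets Lemma~\ref{thPPTV1}'s connectedness hypothesis be satisfied — i.e. ensuring we never feed the lemma a plane whose support avoids $U$ entirely — and organizing the $2$-dimensional and $3$-dimensional checks so the case analysis stays short rather than ballooning over all equivalence classes.
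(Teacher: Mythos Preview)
There is a genuine gap. Lemma~\ref{thPPTV1} is stated for a \emph{double permutation}, i.e.\ a matrix whose support meets every line in exactly two points; its proof uses this when it asserts that the $(k-1)$-plane $S_{1,2,3}^{0,i,j}$ is nonempty. The unitrade $U(A)$ of a sesquialteral permutation is not of this type: lines in which $A$ has a single entry equal to $1$ miss $U$ entirely. So you cannot feed $U$ (or a connected component of it) into Lemma~\ref{thPPTV1} as written. Your proposed fix --- restrict to a connected component and to the directions it spans --- does not repair this: already for $d=3$ with support~$(f)$ from Claim~\ref{cl1}, each of the four connected components of $U$ is a $2\times 2\times 2$ cube that spans all three directions yet misses half of the hyperplanes in each direction.

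More fundamentally, the connectedness hypothesis you need is simply false for several items on the suspicious list. For a $3$-dimensional plane of type~$(d)$, $(e)$ or $(f)$, the intersection $U\cap P$ has all its $2$-dimensional slices of the block form~$(F)$, and such a slice is a disjoint union of two $4$-cycles, not a single cycle; correspondingly $U\cap P$ itself splits into several components. Your remark that the $2$-dimensional pictures are ``essentially cycles'' overlooks exactly this case. The paper gets around both obstacles by first replacing $U$ with its even completion $E(U)$, which \emph{is} a genuine double permutation, and then invoking Krotov's direct-sum decomposition (Proposition~\ref{PPTVprop06}) to write $E(U)=\bigoplus_j B_j$. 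An edge-colouring argument on the types $(F)$/$(H)$ of $2$-dimensional planes shows that the disconnected $(F)$-behaviour is confined to the $1$-dimensional summands (which are trivially bitrades), while every summand $B_j$ of dimension $\ge 2$ has all its $2$- and $3$-dimensional planes of type~$(H)$ and~$(h)$ respectively --- and those \emph{are} connected bitrades, so Lemma~\ref{thPPTV1} applies to each such $B_j$. The inclusion $U\subseteq E(U)$ then finishes. You are missing both the passage to $E(U)$ and the direct-sum decomposition that isolates the disconnected pieces.
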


It means that  if a $d$-dimensional polystochastic matrix $A$ of order $4$ has a zero permanent, then $A$ is a convex sum of some multidimensional permutations with zero permanent. Such permutations were previously described in~\cite{AAT18}.

\begin{theorem}[\cite{AAT18}, Theorem 5]\label{PPTVprop16}
Let $A$ be a $d$-dimensional permutation of order $4$ such that $\per A = 0$. Then $d$ is odd and $A$ is equivalent to the matrix $\mathcal{M}_4^d$. 
\end{theorem}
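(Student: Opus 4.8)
The plan is to prove the two assertions separately: first that $A$ is equivalent to $\mathcal{M}_4^d$, and only then that this forces $d$ to be odd. The second part is the easy one. Once $A\cong\mathcal{M}_4^d$, the value $\per A$ is the number of transversals of the latin hypercube $\mathcal{Q}_4^{d-1}$, and the classical symbol-sum argument settles it: summing the symbol $\alpha_1+\cdots+\alpha_{d-1}\pmod 4$ over the four cells of a transversal gives $0+1+2+3\equiv 2\pmod 4$ on one hand, and, since in each of the $d-1$ coordinates the four cells take all four distinct values, it gives $(d-1)\binom{4}{2}\equiv 2(d-1)\pmod 4$ on the other. Hence $2\equiv 2(d-1)\pmod 4$, so a transversal can exist only if $d-1$ is odd; together with Sun's theorem \cite{sun} (which produces a transversal of $\mathcal{Q}_4^{m}$ for every odd $m$) this yields $\per\mathcal{M}_4^d=0$ exactly when $d$ is odd. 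As $\per A=0$, we conclude $d$ is odd. Thus the whole weight of the proof lies in establishing $A\cong\mathcal{M}_4^d$.

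I would prove $A\cong\mathcal{M}_4^d$ by induction on $d$, the engine being the passage from order $4$ to order $2$ supplied by the partitions $\mathcal{P}_1,\mathcal{P}_2,\mathcal{P}_3$. The base case $d=3$ is the classification of latin squares of order $4$: up to equivalence there are exactly two, the Cayley tables of $\mathbb{Z}_4$ and of $\mathbb{Z}_2\times\mathbb{Z}_2$; a direct check shows the latter has a transversal, so the only order-$4$ latin square without one is $\mathbb{Z}_4=\mathcal{Q}_4^2$, i.e. $A\cong\mathcal{M}_4^3$. For the inductive step, rewrite $\per A=0$ via Proposition~\ref{perreduction} as the statement that no quadruple of pairwise diagonally located support cells exists; folding each coordinate by $p_{\varepsilon_i}$ sends such a quadruple to four points of $\{0,1\}^d$ having exactly two $0$'s and two $1$'s in every coordinate. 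The core structural claim I would aim for is that zero permanent forces a \emph{single} partition type $\mathcal{E}$ for which $\supp(A)$ is carried into the even-weight subcubes $C_y$ (those with $\sum_i y_i$ even) and restricts on each such $C_y$ to an order-$2$ permutation. This is precisely the recursive shape of $\mathcal{M}_4^d$: taking every $\varepsilon_i$ to the parity partition $\mathcal{P}_2$, one checks that $\supp(\mathcal{M}_4^d)$ lands in the even-weight subcubes and equals a translate of $\mathcal{M}_2^d$ inside each of them.

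Granting that folded structure, the problem descends to order $2$: $A$ becomes a disjoint union of order-$2$ permutations, one in each even-weight subcube, which must fit together consistently along the order-$4$ lines they share. I expect the order-$2$ classification (every order-$2$ matrix with zero permanent is an $\mathcal{M}_2^{\,\cdot}$ of odd dimension \cite{AAT16}) together with this gluing analysis to pin the global matrix down to $\mathcal{M}_4^d$, up to the equivalences generated by permuting hyperplanes and index positions. The main obstacle is precisely the structural claim of the previous paragraph. Because an individual hyperplane of $A$ may well have positive permanent — already for $d=3$ each hyperplane is a single permutation matrix — zero permanent is a genuinely \emph{joint} condition on the four hyperplanes, so one cannot simply feed a slice to the inductive hypothesis. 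Extracting the partition type $\mathcal{E}$ and proving the even-weight/order-$2$ dichotomy is where the real combinatorial work lies; I would attempt it by analysing the order-$4$ latin-square slices (the $3$-dimensional planes of $A$) through Proposition~\ref{perreduction} with $k=3$, showing that zero permanent forces every such slice to be of $\mathbb{Z}_4$-type, and then assembling these local $\mathbb{Z}_4$-structures into a global one. Controlling how the $\mathbb{Z}_4$-type slices in different directions must agree is the delicate step, and the place where I expect the argument to be longest.
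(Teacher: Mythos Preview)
The paper does not contain its own proof of this theorem: it is quoted verbatim as Theorem~5 of~\cite{AAT18} and used as a black box (see the line immediately after the statement and its invocations in Lemma~\ref{thPPTV2}, Lemma~\ref{addtofilled}, and Proposition~\ref{nonintersect}). There is therefore nothing in the present paper to compare your proposal against.

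As for the proposal itself: your parity argument for ``$d$ is odd'' is correct and standard, and your plan to pass to the block structure via a partition $\mathcal{E}\in\{1,2,3\}^d$ is indeed the mechanism used in~\cite{AAT18} (where it goes under the name of complete reducibility / semilinearity of $n$-quasigroups of order~$4$, cf.~\cite{PK06}). However, what you have written is an outline, not a proof: the central structural claim---that $\per A=0$ forces a single partition $\mathcal{E}$ under which $\supp(A)$ lands in the even-weight subcubes and is an order-$2$ permutation in each---is stated as a target (``the core structural claim I would aim for''), and you explicitly flag the gluing of local $\mathbb{Z}_4$-type slices into a global one as ``the delicate step'' without supplying it. That step is essentially the whole theorem. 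If you want a self-contained argument, you will need to actually carry out the analysis in~\cite{AAT18} (or something equivalent): first show every $3$-dimensional plane of $A$ is of $\mathbb{Z}_4$-type, then prove the compatibility of the induced partitions across directions; neither is automatic from what you have written.
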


Thus we obtain the following result.

\begin{lemma}\label{thPPTV2}
Let $A$ be a $d$-dimensional polystochastic  of order $4$ such that $\per A = 0$. Then $d$ is odd and $A$ is  equivalent to a convex sum of matrices equivalent to $\mathcal{M}_4^d$.
\end{lemma}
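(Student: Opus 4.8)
The plan is to assemble Lemma~\ref{thPPTV2} from the ingredients prepared in Sections~\ref{reductionsection} and~\ref{bitradesection}, so the argument is a synthesis rather than a fresh computation. Assume $\per A=0$. First I would invoke Lemma~\ref{corPPTV2}: no line of $A$ can carry three or more nonzero entries, hence every line carries at most two, i.e.\ $A$ is a sesquialteral permutation. Next I would appeal to the classification of the supports of the $3$-dimensional planes of zero-permanent sesquialteral permutations of order $4$ obtained in Section~\ref{reductionsection}, which says that $A$ is \emph{suspicious}. Then Lemma~\ref{Ubitrade} applies and gives that $U=U(A)$ is a bitrade; fix a sign function $\sigma$ on $U$.

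With $U(A)$ a bitrade, the observation recorded after the definition of bitrades produces a multidimensional permutation $M$ with $\supp(M)=\{\alpha\in\supp(A):a_\alpha=1\text{ or }\sigma(\alpha)=1\}\subseteq\supp(A)$ and, by induction on $|\supp(A)|$, shows that $A$ is a convex combination $A=\sum_{i=1}^{t}\lambda_i M_i$ of multidimensional permutations. (Concretely, split $M$ off with coefficient $\mu=\min\{a_\alpha:\sigma(\alpha)=1\}$: the matrix $\frac{1}{1-\mu}(A-\mu M)$ is again a polystochastic sesquialteral permutation, its unitrade is still a bitrade with the restriction of $\sigma$, and its support is strictly smaller, so the induction closes; the base case $U(A)=\varnothing$ is $A$ itself.) Since every $\lambda_i>0$ and the $M_i$ are nonnegative, $\supp(M_i)\subseteq\supp(A)$ automatically; and $\per A=0$ means $\supp(A)$ contains no diagonal, hence so does $\supp(M_i)$, giving $\per M_i=0$ for all $i$.

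Finally I would apply Theorem~\ref{PPTVprop16} to each $M_i$: it forces $d$ to be odd and each $M_i$ to be equivalent to $\mathcal{M}_4^d$. Since $A$ is polystochastic the sum is nonempty, so $d$ is odd and $A=\sum_{i=1}^{t}\lambda_i M_i$ is a convex sum of matrices equivalent to $\mathcal{M}_4^d$, which is the assertion.

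The substantive work --- that a zero-permanent sesquialteral permutation of order $4$ is suspicious (Section~\ref{reductionsection}) and that a suspicious one has a bitrade unitrade (Lemma~\ref{Ubitrade}) --- is done beforehand, so inside this lemma the only point needing a little care is the Birkhoff-type peeling that keeps each extracted permutation supported inside $\supp(A)$; the rest is immediate.
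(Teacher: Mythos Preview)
Your proposal is correct and matches the paper's approach: the paper presents Lemma~\ref{thPPTV2} as the synthesis of Lemma~\ref{corPPTV2}, the suspicious classification (Claim~\ref{cl1} combined with Proposition~\ref{perreduction}), Lemma~\ref{Ubitrade}, the bitrade/convex-combination observation, and Theorem~\ref{PPTVprop16}, exactly as you have assembled them. One small correction: the reduction to suspicious sesquialteral permutations via Claim~\ref{cl1} lives in Section~\ref{bitradesection}, not Section~\ref{reductionsection}; otherwise your write-up, including the explicit peeling step, is a faithful and slightly more detailed version of the paper's intended argument.
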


In the last section (Section~\ref{convexsumsec}) we show that  if an odd-dimensional matrix $A$ is equal to a convex sum of matrices  equivalent to $\mathcal{M}_4^d$ and has a zero permanent, then $A$ is equivalent to $\mathcal{M}^d_4$ or some matrix from $\mathcal{L}_4^d$.

\section{Reduction to sesquialteral permutations} \label{reductionsection}

We start with some auxiliary results.

\begin{proposition} \label{blocks}
Let $A$ be a $d$-dimensional polystochastic matrix of order $4$. If there is a subcube $C$ of order $2$ such that  the weight $w(C) =0$ or $w(C) = 2^{d-1}$,  then every line of $A$ contains no more than two nonzero entries.
\end{proposition}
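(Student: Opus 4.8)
The plan is to use the polystochastic condition to extract a linear relation between the weights of neighbouring subcubes of the partition, conclude that \emph{every} subcube has weight $0$ or $2^{d-1}$, and then observe that the weight-$0$ subcubes already account for all the zero entries that a line must contain.

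First I would fix a tuple $\mathcal{E}=(\varepsilon_1,\dots,\varepsilon_d)$ and the associated partition of $A$ into the $2^d$ subcubes $C_y$, $y\in\{0,1\}^d$, with $C=C_{y^\ast}$ the subcube of the stated weight. For $y\in\{0,1\}^d$ put $w(y):=w(C_y)$. The key identity is: for any direction $j$ and any two subcubes $C_y,C_{y'}$ whose labels differ only in coordinate $j$, one has $w(y)+w(y')=2^{d-1}$. To see this, note that every line of direction $j$ meeting $C_y$ also meets $C_{y'}$, in exactly two cells of each; there are $2^{d-1}$ such lines, each entry of $C_y\cup C_{y'}$ lies on exactly one of them, and each of these lines has entry-sum $1$ by polystochasticity, so summing gives $w(y)+w(y')=2^{d-1}$. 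Getting this bookkeeping with $p_{\varepsilon_i}$ and the correspondence $y_i=p_{\varepsilon_i}(\alpha_i)$ right is the only place that needs care.

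Next, from $w(y)+w(y')=2^{d-1}$ for all cube-edges it follows that $f(y):=w(y)-2^{d-2}$ changes sign along every edge of $\{0,1\}^d$, hence $f(y)=(-1)^{|y|}f(\mathbf 0)$, i.e.\ $w(y)=2^{d-2}+(-1)^{|y|}c$ with $c=w(\mathbf 0)-2^{d-2}$. Now I plug in the hypothesis: since $w(y^\ast)\in\{0,2^{d-1}\}$, we get $(-1)^{|y^\ast|}c=\mp 2^{d-2}$, so $|c|=2^{d-2}$, and therefore $w(y)\in\{0,2^{d-1}\}$ for \emph{every} $y$, the two values alternating with the parity of $|y|$. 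In particular, among any two subcubes sharing a cube-edge exactly one has weight $0$.

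Finally, take an arbitrary line $\ell$ of $A$, say of direction $j$. It meets exactly two subcubes $C_y$ and $C_{y'}$ with labels differing only in coordinate $j$, each in two cells; by the previous paragraph one of them, say $C_y$, has weight $0$. Since all entries of $A$ are nonnegative, every entry of $C_y$ equals $0$, so the two cells of $\ell$ inside $C_y$ are zero and $\ell$ carries at most the two remaining entries (those in $C_{y'}$) nonzero. Hence every line of $A$ has at most two nonzero entries. I expect no genuine obstacle here; the argument is entirely elementary once the line-summation identity of the first step is set up correctly.
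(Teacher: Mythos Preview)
Your proof is correct and follows essentially the same approach as the paper: both rely on the identity $w(C_y)+w(C_{y'})=2^{d-1}$ for subcubes adjacent along a hypercube edge, then propagate the value $0$ or $2^{d-1}$ from the given subcube to all of $\{0,1\}^d$ according to parity, and finish by noting that each line has its two cells in a weight-$0$ subcube forced to zero. Your presentation is slightly more polished---you package the propagation via the closed formula $w(y)=2^{d-2}+(-1)^{|y|}c$ and thereby handle both hypotheses $w(C)\in\{0,2^{d-1}\}$ uniformly---whereas the paper iterates the neighbour relation more informally; but the underlying argument is the same.
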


\begin{proof}
Without loss of generality, we may assume that subcube $C$ is the subcube $C_{0 \cdots 0}$ for some partition $\mathcal{E}$ of $A$. 
If $w(C_{0 \cdots 0}) =0$, then all entries of $C_{0 \cdots 0}$ are equal to $0$. By the definition of polystochastic matrix,    for every $y \in \{ 0,1\}^d$, $w(y) = 1$, the  weight $w(C_y)$  is equal to $2^{d-1}$.  It implies that  for every $y \in \{ 0,1\}^d$, $w(y) = 2$, the  weight $w(C_y)$  is equal to $0$.   Similarly, if $w(C_{0 \cdots 0}) =2^{d-1}$, then   for every $y \in \{ 0,1\}^d$, $w(y) = 2$, the  weight $w(C_y)$  is equal to $2^{d-1}$. 
 Iterating this process, we obtain that every line contains no more than two non-zero entries located in subcubes $C$ with  $w(C) = 2^{d-1}$.
\end{proof}

\begin{proposition}\label{corPPTV4}
Let $A$ be a $d$-dimensional polystochastic matrix of order $n$. If $\Gamma$ is a $k$-dimensional plane in $A$ such that  there exists $\alpha \in \Gamma$ for which $0 < \gamma_\alpha< 1$, then there exist  a diagonally located $k$-dimensional plane $\Gamma'$ of the same direction and   $\alpha' \in \Gamma'$  such that  $0 < \gamma'_{\alpha'}< 1$.
\end{proposition}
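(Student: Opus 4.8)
The plan is to build the required plane $\Gamma'$ by changing the $d-k$ fixed coordinates of $\Gamma$ one at a time, always keeping hold of an index whose entry lies strictly between $0$ and $1$. The single ingredient I need is the elementary remark that \emph{if a line of a polystochastic matrix contains an index $\alpha$ with $0<a_\alpha<1$, then it contains another such index}: the entries of the line are nonnegative and sum to $1$, so the entries other than $a_\alpha$ sum to $1-a_\alpha\in(0,1)$, hence at least one of them, say $a_\beta$, is positive, and each of them is at most $1-a_\alpha<1$, so $0<a_\beta<1$.

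I would write $\Gamma=A^{\beta_1,\dots,\beta_{d-k}}_{i_1,\dots,i_{d-k}}$, let $\alpha\in\Gamma$ satisfy $0<a_\alpha<1$, set $\alpha^{(0)}=\alpha$, and define $\alpha^{(1)},\dots,\alpha^{(d-k)}$ recursively: given $\alpha^{(j-1)}$ with $0<a_{\alpha^{(j-1)}}<1$, apply the remark to the line through $\alpha^{(j-1)}$ in direction $i_j$ to obtain an index $\alpha^{(j)}$ on that line, distinct from $\alpha^{(j-1)}$, with $0<a_{\alpha^{(j)}}<1$. Since $\alpha^{(j)}$ and $\alpha^{(j-1)}$ agree in every coordinate except $i_j$, an easy induction shows that for each $j$ the index $\alpha^{(j)}$ has, in positions $i_1,\dots,i_j$, values all different from $\beta_1,\dots,\beta_j$ respectively, still has value $\beta_\ell$ in position $i_\ell$ for $\ell>j$, and coincides with $\alpha$ in the remaining $k$ positions (the varying positions of $\Gamma$).

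Taking $\alpha'=\alpha^{(d-k)}$, I get an index with a value $\beta'_j\neq\beta_j$ in each fixed position $i_j$ and coinciding with $\alpha$ elsewhere; hence $\alpha'$ lies in $\Gamma':=A^{\beta'_1,\dots,\beta'_{d-k}}_{i_1,\dots,i_{d-k}}$, which is a $k$-dimensional plane of the same direction as $\Gamma$, is diagonally located with respect to $\Gamma$, and satisfies $0<\gamma'_{\alpha'}=a_{\alpha'}<1$, as required. The argument is entirely direct; the only point needing (minor) care is the inductive bookkeeping that each line-move alters a \emph{new} fixed coordinate away from its original value and never reverts a previously changed one, which is automatic because a move in direction $i_j$ changes only coordinate $i_j$.
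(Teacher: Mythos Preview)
Your proof is correct and follows essentially the same approach as the paper: both arguments rest on the elementary observation that a line containing a fractional entry must contain a second one, and then iterate this observation along successive directions. The only cosmetic difference is that the paper phrases the iteration as an induction on $d$ producing an index $\alpha'$ differing from $\alpha$ in \emph{all} $d$ coordinates (and then takes any $\Gamma'$ through $\alpha'$), whereas you iterate only over the $d-k$ fixed directions of $\Gamma$; your version is slightly more economical but the idea is identical.
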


\begin{proof}
It is sufficient to establish that for every index $\alpha$ such that $0 < a_\alpha< 1$ there is a diagonally located index $\alpha'$ with  the same property. Indeed,  if $\Gamma$ is such that  $\alpha \in \Gamma$,  then we  choose $\Gamma'$ so that $\alpha' \in \Gamma'$. 

The required statement  can be easily proved by induction on $d$. The inductive step follows  from the fact that if  there is a line $L$   in a polystochastic matrix such that $\alpha \in L$,  $0 < a_\alpha < 1$, then there is another index $\beta \in L$ for which  $0 < a_{\beta} < 1$. 
\end{proof}

\begin{proposition}\label{01minor}
Let $A$ be a $d$-dimensional polystochastic matrix of order $n$ such that there exists a $d$-dimensional $(0,1)$-submatrix in $A$ of order $n-1$. Then the matrix $A$ is a multidimensional permutation.
\end{proposition}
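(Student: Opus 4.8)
The plan is to leverage the polystochastic condition to force all the remaining entries into a rigid structure. Let $B$ be the $(0,1)$-submatrix of order $n-1$ sitting in $A$; after permuting hyperplanes of each direction we may assume $B$ occupies the index block $\{0,\dots,n-2\}^d$. The key first observation is that $B$ itself is (up to the obvious normalization) forced to be a $(d-1)$-dimensional permutation structure: each line of $A$ of length $n$ that lies mostly inside $B$ has all but one entry in $B$ already determined to be $0$ or $1$, and the line sum is $1$, so the coordinate-$(n-1)$ entry of that line is pinned down as well. I would make this precise by looking, for a fixed direction $i$ and a fixed line $L$ of that direction with first $n-1$ entries inside $B$: if those $n-1$ entries sum to $1$ in $B$, the last entry (which lies in the hyperplane $A_i^{n-1}$) must be $0$; if they sum to $0$, the last entry must be $1$.

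Next I would show $B$ has exactly one $1$ in each of its lines. Suppose some line of $B$ (direction $i$, length $n-1$) contained two or more $1$s; then, reading the argument above, the matching entry in $A_i^{n-1}$ would have to be negative, contradicting nonnegativity. Suppose instead some line of $B$ contained no $1$ at all; then the matching entry in $A_i^{n-1}$ equals $1$. Now I would count: summing over all lines of a fixed direction $i$ internal to $B$, the number of all-zero lines of $B$ equals the number of entries equal to $1$ forced into the ``last slab'' $A_i^{n-1}$ along direction $i$. But one can also count the $1$s of $B$ two ways and, together with the polystochastic line sums in the orthogonal directions restricted to the slab $A_i^{n-1}$, deduce that in fact every internal line of $B$ has exactly one $1$; equivalently $\supp(B)$ is a $(d{-}1)$-dimensional permutation (an MDS code / latin hypercube structure) on $\{0,\dots,n-2\}^d$. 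This is really the combinatorial heart of the argument and the step I expect to require the most care: getting the bookkeeping right so that ``at least one $1$ per line'' and ``at most one $1$ per line'' both follow purely from nonnegativity plus the line-sum-$1$ condition, with no slack.

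Once $B$ is known to be a permutation submatrix, the rest is forced. Every internal line of $B$ sums to $1$, so the corresponding entry of $A$ in the last slab $A_i^{n-1}$ is $0$, for every direction $i$; hence all entries of $A$ with exactly one coordinate equal to $n-1$ and the rest in $\{0,\dots,n-2\}$ are $0$. Then, iterating: consider a line of direction $i$ whose non-$(n-1)$ coordinates are otherwise fixed in $\{0,\dots,n-2\}$ — it meets $\supp(B)$ in exactly one cell, so its sum is already $1$, forcing the remaining entry (the one in the last slab) to be $0$, and so on. Propagating this through the ``corner'' indices (those with two or more coordinates equal to $n-1$) by the same line-sum argument — each such line either already sees a $1$ from the part we have pinned down, or its complementary entry is forced — shows that the only nonzero entries of $A$ outside $B$ lie at indices all of whose coordinates equal $n-1$ except possibly along a single controlled pattern; carefully, one gets that $a_\alpha$ for $\alpha$ with some coordinate $=n-1$ is $0$ unless $\alpha=(n-1,\dots,n-1)$, and by the line sum through that corner point, $a_{(n-1,\dots,n-1)}=1$. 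Thus $A$ is a $(0,1)$-matrix with exactly one $1$ per line, i.e.\ a $d$-dimensional permutation. I would present the iteration as an induction on the number of coordinates of $\alpha$ equal to $n-1$, the base case being the one-coordinate case handled above and the inductive step being a single application of the line-sum/nonnegativity dichotomy. The main obstacle, to reiterate, is the middle step: ruling out the ``all-zero internal line of $B$'' case cleanly, since a priori such a line could be compensated by the last slab — one must use the polystochasticity of $A$ along \emph{all} directions simultaneously to close that door.
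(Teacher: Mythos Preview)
Your central claim---that every internal line of the $(0,1)$-submatrix $B$ contains exactly one $1$---is false, and the counting argument you sketch cannot establish it. Take $d=2$, $n=3$, and the permutation matrix
\[
A=\begin{pmatrix}0&0&1\\1&0&0\\0&1&0\end{pmatrix}.
\]
The upper-left $2\times 2$ block $B=\begin{pmatrix}0&0\\1&0\end{pmatrix}$ is a $(0,1)$-submatrix of order $n-1$, yet its first row and second column are all-zero. Consequently your downstream claims also fail: it is not true that every entry of $A$ with some coordinate equal to $n-1$ vanishes except at the corner $(n-1,\dots,n-1)$; in the example above $a_{0,2}=a_{2,1}=1$. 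So the whole propagation, as you have set it up, collapses.

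What \emph{does} work is much simpler and is essentially what the paper does: induct on $d$. For each $j\in\{0,\dots,n-2\}$ the hyperplane $A_d^{\,j}$ is a $(d-1)$-dimensional polystochastic matrix containing the $(d-1)$-dimensional $(0,1)$-corner $B\cap A_d^{\,j}$ of order $n-1$; by the inductive hypothesis each such hyperplane is a permutation, hence a $(0,1)$-matrix. Then every line of direction $d$ has its first $n-1$ entries in $\{0,1\}$, so the remaining entry (in $A_d^{\,n-1}$) equals $1$ minus a nonnegative integer and is therefore in $\{0,1\}$ as well; thus $A$ is a $(0,1)$-polystochastic matrix, i.e.\ a permutation. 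Your iterative idea on the number of coordinates equal to $n-1$ can also be rescued, but only if at each step you assert merely that the entry lies in $\{0,1\}$ rather than that it equals $0$; the stronger conclusion you aim for is simply not available.
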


\begin{proof}
The statement can be easily proved by induction on dimension $d$ from the definition of a  polystochastic matrix. 
\end{proof}

The following claim describes $3$-dimensional stochastic matrices with zero permanent in which one of the hyperplanes has a line with at least three nonzero entries.

\begin{claim}\label{clPPTV3}
Let $A$ and $B$ be a  pair of $2$-dimensional polystochastic matrices of order $4$ such that 
\begin{itemize}
\item there exists a line in $A$ with at least three nonzero entries;
\item there exists a line in $B$ with at least two nonzero entries;
\item there exist permutations $C_1$ and $C_2$ such that $\per (A, B, C_1, C_2) = 0$;
\item the matrix $A$ has the minimal (by inclusion) support among all matrices with such properties. 
\end{itemize}
Then up two the equivalence there are only two types of supports of matrices $A$
$$
(A1)\quad
 \begin{array}{cccc}
 \bullet & \circ & \circ & \circ\\
 \circ & \bullet & \bullet& \bullet\\
 \circ& \bullet & \circ& \bullet\\
 \circ& \bullet& \bullet& \circ
 \end{array} \qquad
 (A2)\quad
 \begin{array}{cccc}
 \circ & \bullet & \bullet & \bullet\\
 \bullet & \bullet& \circ & \circ \\
 \bullet& \circ & \bullet& \circ\\
 \bullet& \circ& \circ& \bullet
 \end{array}
$$
Given a matrix $A$ with support $(A1)$, supports of the corresponding  matrices $B$ belong to List 1

{\bf \large List  1.}
$$
\quad
 \begin{array}{cccc}
 \bullet & \circ & \circ & \circ\\
 \circ & \circ & \bullet& \bullet\\
 \circ& \bullet & \bullet& \circ\\
 \circ& \bullet& \circ& \bullet
 \end{array}
\quad
 \begin{array}{cccc}
 \bullet & \circ & \circ & \circ\\
 \circ & \bullet & \circ& \bullet\\
 \circ& \circ & \bullet& \circ\\
 \circ& \bullet& \circ& \bullet
 \end{array}
\quad
 \begin{array}{cccc}
 \circ & \circ & \bullet & \circ\\
 \circ & \bullet & \circ& \bullet\\
 \bullet& \circ & \circ& \circ\\
 \circ& \bullet& \circ& \bullet
 \end{array}
\quad
 \begin{array}{cccc}
 \bullet & \circ & \circ & \circ\\
 \circ & \bullet & \bullet& \circ\\
 \circ& \bullet & \bullet& \circ\\
 \circ& \circ& \circ& \bullet
 \end{array}
\quad
 \begin{array}{cccc}
 \circ & \circ & \circ & \bullet\\
 \circ & \bullet & \bullet& \circ\\
 \circ& \bullet & \bullet& \circ\\
 \bullet& \circ& \circ& \circ
 \end{array}
$$

and for a matrix $A$ with support $(A2)$, supports of $B$ belong to List 2

{\bf \large List 2.} 
$$
\begin{array}{cccc}
 \circ & \bullet & \bullet & \circ\\
 \bullet& \circ & \bullet& \circ\\
 \bullet& \bullet & \circ& \circ\\
 \circ& \circ& \circ& \bullet
 \end{array}
 \quad
\begin{array}{cccc}
 \circ & \bullet & \circ & \bullet\\
 \bullet & \circ & \circ& \bullet\\
 \circ& \circ & \bullet& \circ\\
 \bullet& \bullet & \circ& \circ\\
 \end{array}
 \quad
\begin{array}{cccc}
 \circ & \circ & \bullet & \bullet\\
 \circ & \bullet & \circ& \circ\\
 \bullet & \circ & \circ& \bullet\\
 \bullet& \circ & \bullet& \circ\\
 \end{array}
 \quad
\begin{array}{cccc}
 \bullet & \circ & \circ & \circ\\
  \circ & \bullet & \circ & \bullet\\
 \circ & \bullet & \bullet & \circ\\
 \circ& \circ& \bullet& \bullet \\
 \end{array}
  \quad
\begin{array}{cccc}
 \bullet & \circ & \circ & \circ\\
  \circ & \bullet & \bullet & \circ\\
 \circ& \circ& \bullet& \bullet \\
 \circ & \bullet & \circ & \bullet\\
 \end{array}
$$
Moreover,  
\begin{enumerate}
\item If  the support  of $B$  belongs to List 1, then the $0$-th row and column of  $B$ contain only zeroes and ones.
\item If the support of $A$ is equal to $(A2)$ and  the support of $B$   belongs to List 2, then for every permutations  $C_1, C_2$  such that ${\rm per}(A,B,C_1,C_2)=0$ we have  $C_1=C_2$. 
\end{enumerate}
\end{claim}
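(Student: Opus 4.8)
The plan is to treat this as a finite case analysis driven by the support-minimality hypothesis on $A$, exploiting Proposition~\ref{perreduction} with $k=1$ (reduction over lines/hyperplanes of one direction) together with the elementary fact that $\per(A,B,C_1,C_2)=0$ forces, for every pair $(\pi_3,\pi_4)$ of diagonally located rows contributed by the permutations $C_1,C_2$, that the $2$-dimensional stochastic matrix obtained from $A$ and $B$ by deleting those two rows has zero permanent. Concretely, since $C_1,C_2$ are permutations of order $4$, writing them as permutation matrices $P_{\pi_3},P_{\pi_4}$ with $\pi_3,\pi_4$ nowhere-agreeing, the identity ${\rm per}(A,B,C_1,C_2)=\sum{\rm per}(A^{j_3}_1 \text{-minor})$ collapses to a statement purely about the $4\times4$ supports of $A$ and $B$: for certain induced $2\times 2$ subpatterns determined by pairs of disjoint transversals, the corresponding $2$-dimensional permanents vanish. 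First I would fix the line of $A$ with $\ge 3$ nonzero entries to lie in a standard position (say the second row, positions $1,2,3$), use polystochasticity to see that the first row must then be concentrated, i.e.\ have support exactly $\{0\}$ in case (A1), and run the symmetric argument with $\ge 3$ entries in a column to get case (A2); minimality of $\supp(A)$ is what kills the fuller patterns and pins $A$ down to exactly (A1) or (A2) up to equivalence (row/column permutations fixing the distinguished line, plus transposition).

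Next, with $A$ fixed, I would enumerate the admissible supports of $B$. Here the workhorse is again Proposition~\ref{perreduction}: $\per(A,B,C_1,C_2)=0$ means that for \emph{every} choice of two diagonally located rows $r,r'$ (these are exactly the rows that $C_1,C_2$ can realize, since a permutation matrix of order $4$ picks one $1$ per row, and two such must be diagonally located to sit on a common diagonal of the $4$-dimensional matrix), the $2$-dimensional stochastic matrix formed by the remaining two rows of $A$ and of $B$ has zero permanent — equivalently, those two $2\times 2$ "rectangular" supports have no system of distinct representatives compatible across the pair. Running through all $\binom{4}{2}\cdot$(derangement-count) choices of $(r,r')$ against the fixed support (A1) (resp.\ (A2)) produces a constraint system on $\supp(B)$; since $B$ is itself a $2$-dimensional polystochastic matrix of order $4$ with a line having $\ge 2$ nonzero entries, solving this system yields precisely the five patterns of List~1 (resp.\ List~2). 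This is the step I expect to be the main obstacle: it is a genuine, if bounded, combinatorial enumeration, and the bookkeeping — keeping track of which pairs of rows/transversals force which cells of $B$ to be $\circ$, and then checking the surviving candidates are mutually inequivalent and really do admit suitable $C_1,C_2$ — is where all the work lies. I would organize it by first locating the $\bullet$ forced in $B$ by the "long" line of $A$ and then propagating; symmetry of (A1) (resp.\ (A2)) under its stabilizer in the equivalence group cuts the number of genuinely distinct cases to a manageable handful.

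Finally I would verify the two addenda. For (1): for a support of $B$ in List~1 one inspects that the $0$-th row and $0$-th column each meet $\supp(B)$ in a single cell; since $B$ is polystochastic of order $4$, a line with exactly one nonzero entry forces that entry to be $1$, and then the rest of that line is $0$, which is the assertion. For (2): with $\supp(A)=(A2)$ and $\supp(B)$ from List~2, one re-examines the zero-permanent constraints derived above for the \emph{specific} pairs $(r,r')$ and shows that the only way all the induced $2$-dimensional permanents vanish simultaneously is that the two "remaining rows" contributed by $C_1$ and by $C_2$ must in fact be the \emph{same} row in each case — which, unwound, says $\pi_3=\pi_4$, i.e.\ $C_1=C_2$; concretely one exhibits, for each List~2 pattern, a cell forcing that if $C_1\ne C_2$ then some diagonal of $(A,B,C_1,C_2)$ is positive, contradicting $\per=0$. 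I would present these as short direct checks on the explicit $4\times4$ patterns rather than invoking further machinery.
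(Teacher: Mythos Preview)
The paper does not give a written proof of this claim: it states only that the result ``is obtained by computer enumeration'' of all nonequivalent supports of $A$ and $B$ and all permutation pairs $(C_1,C_2)$, checking when $\per(A,B,C_1,C_2)=0$. So your plan and the paper's argument are the same in kind---an exhaustive finite check---except that you propose to carry it out by hand with structural shortcuts.

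However, your setup of the reduction contains several errors that would have to be fixed before the enumeration could proceed. First, $(A,B,C_1,C_2)$ is a $3$-dimensional matrix of order $4$, not $4$-dimensional. Second, the quantifier on $C_1,C_2$ is existential: you are looking for \emph{some} pair making the permanent vanish, so the constraints on $\supp(B)$ come only from the cells compatible with that particular pair, not from ``every choice of two diagonally located rows''. Third, there is no reason for the permutations $\pi_3,\pi_4$ underlying $C_1,C_2$ to be nowhere-agreeing; addendum~(2) of the very claim says that in the $(A2)$ case one is forced to have $C_1=C_2$, i.e.\ $\pi_3=\pi_4$. Fourth, the actual constraint is not that a ``$2$-dimensional stochastic matrix formed by the remaining two rows of $A$ and of $B$'' has zero permanent: fixing a valid pair $(r_2,r_3)$ with $\sigma_1(r_2)\neq\sigma_2(r_3)$ and writing $\{a,b\}=\{0,1,2,3\}\setminus\{r_2,r_3\}$ and $\{c,c'\}=\{0,1,2,3\}\setminus\{\sigma_1(r_2),\sigma_2(r_3)\}$, the condition is that each of the four products $A_{a,c}B_{b,c'}$, $A_{a,c'}B_{b,c}$, $A_{b,c}B_{a,c'}$, $A_{b,c'}B_{a,c}$ vanishes---one row of $A$ paired against one row of $B$, not two rows of each. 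With these corrections the enumeration is well posed, but the search (over minimal supports of $A$, supports of $B$, and up to $24^2$ pairs $(C_1,C_2)$) is large enough that the authors opted for a computer; expect the bookkeeping to be substantially heavier than your outline suggests.
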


The claim is obtained by computer enumeration of all nonequivalent  matrices $A$  with at least three nonzero entries in some line,  matrices $B$  with at least two nonzero entries in some line, and permutations $C_1$ and $C_2$  with  checking whether $\per (A, B, C_1, C_2) = 0$. 

Now we are ready to prove   Lemma~\ref{corPPTV2}.

\begin{proof}[of Lemma~\ref{corPPTV2}]
By the Birkhoff theorem, the permanent of every doubly stochastic matrix of order $4$ is positive.

Let $d \geq 3$ and suppose that $A$ is a $d$-dimensional polystochastic matrix of order $4$ with zero permanent such that a line $L$ of $A$ contains at least three nonzero entries. Consider a $2$-dimensional plane $\Gamma$ such that $L \subset \Gamma$.  Without loss of generality, we may assume that the plane $\Gamma$ is the plane $A_{3, \ldots, d}^{0, \ldots,0}$.    By Proposition~\ref{corPPTV4}, there exists a diagonally located $2$-dimensional plane $\Gamma'$ in which there are entries between $0$ and $1$. By Claim~\ref{clPPTV3}, we may assume  that the support of plane $\Gamma$ is equal to  the matrix $(A1)$ or $(A2)$ (otherwise consider a matrix equivalent to $A$). 

Suppose that  a  $2$-dimensional plane $\Gamma = A_{3, \ldots, d}^{0, \ldots,0}$ has support equal the matrix $(A1)$.    Then  every  $2$-dimensional  plane $\Gamma'$  diagonally located  to the plane $\Gamma$  has only zeroes and ones in the  $0$-th  lines  of  first and  second directions (lines $A_{2, 3, \ldots, d}^{0, \alpha_3, \ldots, \alpha_d}$ and $A_{1, 3, \ldots, d}^{0, \alpha_3, \ldots, \alpha_d}$, where $\alpha_3, \ldots, \alpha_d \in \{1, 2, 3 \}$). Indeed, if a plane $\Gamma'$ has at least two nonzero entries in some line, then the support of $\Gamma'$ belongs to List 1. But  Claim~\ref{clPPTV3}(1) states that the $0$-th row and the $0$-th column of $\Gamma'$ have only entries equal to zero and one.

Thus we have that in hyperplanes $A_{1}^0$ and $A_2^{0}$ of the matrix $A$ there are $(0,1)$-submatrices of order $3$  composed by  entries $a_{0, \alpha_2, \ldots, \alpha_d}$ and  $a_{\alpha_1, 0, \alpha_3 \ldots, \alpha_d}$, $\alpha_1, \alpha_2, \ldots,  \alpha_d \in \{1,2,3 \}$, respectively.  By Proposition~\ref{01minor}, the hyperplanes  $A_{1}^0$ and $A_2^{0}$ are multidimensional permutations. 

Consider now the $3$-dimensional plane $A_{4, \ldots, d}^{0, \ldots, 0}$. By the above, its  $0$-th hyperplane of the third direction has support equal to $(A1)$, and the $0$-th rows and columns in other hyperplanes of this direction contain only zeros and ones. So the plane $A_{4, \ldots, d}^{0, \ldots, 0}$ has support of the form
$$
\begin{array}{ccccccccccccccccccc}
  \bullet & \circ& \circ& \circ& \qquad &\circ& \circ& \circ& \bullet&  \qquad &  \circ& \circ& \bullet& \circ& \qquad &  \circ& \bullet& \circ& \circ\\
  \circ& \bullet& \bullet& \bullet& \qquad &\circ& \bullet& \bullet& \circ&  \qquad &  \circ& \bullet& \circ& \bullet& \qquad &  \bullet& \circ& \circ& \circ\\
  \circ& \bullet&\circ & \bullet& \qquad &\circ& \bullet& \bullet& \circ&  \qquad &  \bullet& \circ& \circ& \circ& \qquad &  \circ& \circ& \bullet& \bullet\\
  \circ& \bullet& \bullet& \circ& \qquad &\bullet& \circ& \circ& \circ&  \qquad &  \circ& \bullet& \circ& \bullet& \qquad & \circ& \circ& \bullet&
  \bullet
\end{array}
$$

Note that the $2$-dimensional plane $A_{2, 4, \ldots, d}^{1, 0, \ldots, 0}$ composed by the $1$-th columns of  this $3$-dimensional plane $A_{4, \ldots, d}^{0, \ldots, 0}$ has the support equivalent to matrix $(A1)$ with $(0,1)$-lines $A_{1, 2, 4, \ldots, d}^{0, 1, 0, \ldots, d}$ and $A_{2, 3, 4, \ldots, d}^{1, 3, 0, \ldots, 0}$. Repeating the previous reasoning with respect to plane $A_{2, 4, \ldots, d}^{1, 0, \ldots, 0}$ we conclude that hyperplanes $A_{1}^0$ and $A_3^3$ are multidimensional permutations. But the last condition contradicts the structure of the support of the plane $A_{4, \ldots, d}^{0, \ldots, 0}$ since, e.g., the entry $a_{3, 3, 3, 0, \ldots, 0}$ is between $0$ and $1$. So in this case $\per A > 0$. 

Suppose now that all $2$-dimensional planes that have at least three nonzero entries in some line have support equivalent to the matrix $(A2)$. As before, we also assume that   the $2$-dimensional plane $\Gamma = A_{3, \ldots, d}^{0, \ldots,0}$ has support equal the matrix $(A2)$.   Then for every $1 \leq i_1 <  \ldots < i_{d-2} \leq d$ and $1 \leq j_1 <  \ldots < j_{d-1} \leq d$  all $2$-dimensional  planes  $A_{i_1, \ldots, i_{d-2}}^{0, \ldots, 0}$ have support of type $(A2)$ and  $1$-dimensional planes $A_{j_1, \ldots, j_{d-1}}^{0, \ldots, 0}$ have three nonzero entries and one zero entry $a_{0, \ldots, 0} $.

Consider four pairwise diagonally located $2$-dimensional planes $\Gamma_0 = A_{3, \ldots, d}^{0, \ldots,0}$, $\Gamma_1 = A_{3, \ldots, d}^{1, \ldots,1}$, $\Gamma_2 = A_{3, \ldots, d}^{2, \ldots,2}$, and $\Gamma_3 = A_{3, \ldots, d}^{3, \ldots,3}$.  As before, by Proposition~\ref{corPPTV4}, there exists a   $2$-dimensional plane  diagonally located with respect to $\Gamma_0$   that contains an entry between $0$ and $1$.    By Claim~\ref{clPPTV3},  we get that the support of $\Gamma_1$ is a matrix from  List 2, and planes $\Gamma_2$ and $\Gamma_3$ are the same permutations $C$.   Moreover, we can simultaneously permute $1$-th, $2$-th, and $3$-th rows and columns of planes $\Gamma_0, \ldots, \Gamma_3$ so that the entry $a_{1, \ldots, 1}$ will be between $0$ and $1$. 

Consider now pairs of diagonally located planes $\Gamma'_2 = A_{3, \ldots, d}^{\alpha_3, \ldots, \alpha_d}$ and $\Gamma'_3 = A_{3, \ldots, d}^{\beta_3, \ldots,\beta_4}$, $\alpha_i, \beta_i \in \{ 2,3\}$ for all $i \in \{ 3, \ldots, d\}$, that are also diagonally located to $\Gamma_0$ and $\Gamma_1$. Claim~\ref{clPPTV3} implies that $\Gamma'_2$ and $\Gamma'_3$  are the same  permutation matrices. Therefore, the subcube, $M = \{ m_{\alpha} : \alpha_i \in \{ 2,3\}, i \in \{ 1, \ldots, d\}\}$, is a $(0,1)$-matrix. Moreover, the condition that the planes  $\Gamma'_2$ and $\Gamma'_3$  are the same  matrices implies that $m_{\alpha} = m_{\beta}$ for every $\alpha, \beta$ such that $\alpha_1 = \beta_1$, $\alpha_2 = \beta_2$, and $\alpha_i \neq \beta_i$ for all $i \in \{ 3, \ldots, d\}$. 

As it was noted before, every  $2$-dimensional plane  $A_{2, \ldots, j-1, j+1,  \ldots, d }^{0, \ldots, 0,0, \ldots, 0}$ has support of type $(A2)$. Consider pairwise diagonally located planes $\Gamma''_1 = A_{2, \ldots, j-1, j+1,  \ldots, d }^{1, \ldots, 1,1, \ldots, 1}$, $\Gamma''_2 = A_{2, \ldots, j-1, j+1,  \ldots, d }^{\alpha_2, \ldots, \alpha_{j-1},\alpha_{j+1}, \ldots, \alpha_d}$, and $\Gamma''_3 = A_{2, \ldots, j-1, j+1,  \ldots, d }^{\beta_2, \ldots, \beta_{j-1},\beta_{j+1}, \ldots, \beta_d}$, where $\alpha_i, \beta_i \in \{ 2,3\}$. Note that plane $\Gamma''_1$ contains an entry between $0$ and $1$, namely $a_{1, \ldots, 1}$, and so  $\Gamma''_1$ has a line with at least two nonzero entries. 
 Therefore,  by Claim~\ref{clPPTV3}, the support of the plane $\Gamma''_1$ belongs to List 2 and planes $\Gamma''_2$ and $\Gamma''_3$ are the same permutations.    Repeating the previous reasoning with respect to planes $\Gamma''_2$ and $\Gamma''_3$, we deduce that in the subcube $M$ it holds $m_{\alpha} = m_{\beta}$ for every $\alpha, \beta$ such that $\alpha_1 = \beta_1$,  $\alpha_j = \beta_j$, and $\alpha_i \neq \beta_i$ for all $i \neq 1, j$. 

Taking a sequence of  $2$-dimensional planes $A_{i_1, \ldots, i_{d-2}}^{0, \ldots, 0}$ such  that  new planes  intersect the previous ones  at  some line and repeating the same argument, we obtain that  $m_{\alpha} = m_{\beta}$ for every $\alpha, \beta$ such that $\rho (\alpha, \beta) = d-2$ in the $(0,1)$-subcube $M$, where $\rho$ is the Hamming distance between indices $\alpha$ and $\beta$ (the number of different positions).   Since for every $\alpha, \beta$ such that $\rho (\alpha, \beta) = 2$  there exists an index $\gamma$ for which $\rho(\alpha, \gamma) = \rho (\gamma, \beta) = d-2$  (e.g., $\rho((2, 2, 2, \ldots 2), (2, 2, 3, \ldots, 3)) = d-2$ and $\rho( (2, 2, 3, \ldots, 3, 3), (2, 3, 2 , \ldots 2, 3 )) = d-2$), the previous condition imply that $m_{\alpha} = m_{\beta}$ for every $\alpha, \beta$ such that $\rho (\alpha, \beta) $ is even. 

There are only two possibilities for $d$-dimensional  $(0,1)$-submatrices $M$ of order $2$ in a polystochastic matrix $A$ of order $4$ satisfying the property  $m_{\alpha} = m_{\beta}$ for all $\alpha, \beta$  at even distance: 

1. All entries of $M$ are equal to $0$. Then the weight of subcube $M$ is zero, and, by Proposition~\ref{blocks}, every line of $A$ contains no more than two non-zero entries: a contradiction.

2. $m_\alpha  = 1$ if $w(\alpha) $ is even  and $m_\alpha  = 0$ if $w(\alpha)  $ is odd (or vice versa). Then the weight of subcube $M$ is equal to $2^{d-1}$. Using again Proposition~\ref{blocks}  we conclude that every line of $A$ contains no more than two non-zero entries: a contradiction.

Therefore, there are no polystochastic matrices of order $4$ with at least three nonzero entries in some line that have a zero permanent. 
\end{proof}

\section{Suspicious sesquialteral permutations} \label{bitradesection}

Firstly, we  describe types of all $3$-dimensional planes of order $4$ that can appear in   sesquialteral permutations of order $4$ with zero permanent.

\begin{claim}\label{cl1}
Let $A$ be a $4$-dimensional stochastic matrix of  order $4$ such that each hyperplane of the first direction in  $A$ is a sesquialteral permutation.  If  ${\rm per}(A)=0$, then  the support of every  hyperplane of the first direction in $A$  is equivalent to one of the matrices from  the following list

$$(a)\qquad
\begin{array}{ccccccccccccccccccc}
\bullet& \circ& \circ& \circ& \qquad & \circ& \bullet& \circ& \circ& \qquad & \circ& \circ& \bullet& \circ& \qquad & \circ& \circ& \circ& \bullet\\
\circ& \bullet& \circ& \circ& \qquad & \bullet& \circ& \circ& \circ& \qquad & \circ& \circ& \circ& \bullet& \qquad & \circ& \circ& \bullet& \circ\\
\circ& \circ& \bullet& \circ& \qquad & \circ& \circ& \circ& \bullet& \qquad & \bullet& \circ& \circ& \circ& \qquad & \circ& \bullet& \circ& \circ\\
\circ& \circ& \circ& \bullet& \qquad & \circ& \circ& \bullet& \circ& \qquad & \circ& \bullet& \circ& \circ& \qquad & \bullet& \circ& \circ& \circ
\end{array}
$$

$$(b)\qquad
\begin{array}{ccccccccccccccccccc}
 \bullet& \circ& \circ& \circ& \qquad &  \circ& \bullet& \circ& \circ& \qquad &  \circ& \circ& \bullet& \circ& \qquad &  \circ& \circ& \circ& \bullet\\
 \circ& \bullet& \circ& \circ& \qquad &  \circ& \circ& \bullet& \circ& \qquad &  \circ& \circ& \circ& \bullet& \qquad &  \bullet& \circ& \circ& \circ\\
 \circ& \circ& \bullet& \circ& \qquad &  \circ& \circ& \circ& \bullet& \qquad &  \bullet& \circ& \circ& \circ& \qquad &  \circ& \bullet& \circ& \circ\\
 \circ& \circ& \circ& \bullet& \qquad &  \bullet& \circ& \circ& \circ& \qquad &  \circ& \bullet& \circ& \circ& \qquad &  \circ& \circ& \bullet& \circ
\end{array}
$$

$$(c)\qquad
\begin{array}{ccccccccccccccccccc}
 \bullet& \circ& \circ& \circ& \qquad &  \circ& \bullet& \circ& \circ& \qquad &  \circ& \circ& \bullet& \circ& \qquad &  \circ& \circ& \circ& \bullet\\
 \circ& \bullet& \circ& \circ& \qquad &  \bullet& \circ& \circ& \circ& \qquad &  \circ& \circ& \circ& \bullet& \qquad &  \circ& \circ& \bullet& \circ\\
 \circ& \circ& \bullet& \circ& \qquad &  \circ& \circ& \circ& \bullet& \qquad &  \bullet& \bullet& \circ& \circ& \qquad &  \bullet& \bullet& \circ& \circ\\
 \circ& \circ& \circ& \bullet& \qquad &  \circ& \circ& \bullet& \circ& \qquad &  \bullet& \bullet& \circ& \circ& \qquad &  \bullet& \bullet& \circ& \circ
\end{array}
$$

$$(d)\qquad
\begin{array}{ccccccccccccccccccc}
 \bullet& \circ& \circ& \circ& \qquad &  \circ& \bullet& \circ& \circ& \qquad &  \circ& \circ& \bullet& \bullet& \qquad &  \circ& \circ& \bullet& \bullet\\
 \circ& \bullet& \circ& \circ& \qquad &  \bullet& \circ& \circ& \circ& \qquad &  \circ& \circ& \bullet& \bullet& \qquad &  \circ& \circ& \bullet& \bullet\\
 \circ& \circ& \bullet& \circ& \qquad &  \circ& \circ& \circ& \bullet& \qquad &  \bullet& \bullet& \circ& \circ& \qquad &  \bullet& \bullet& \circ& \circ\\
 \circ& \circ& \circ& \bullet& \qquad &  \circ& \circ& \bullet& \circ& \qquad &  \bullet& \bullet& \circ& \circ& \qquad &  \bullet& \bullet& \circ& \circ
\end{array}
$$

$$(e)\qquad
\begin{array}{ccccccccccccccccccc}
 \bullet& \circ& \circ& \circ& \qquad &  \circ& \bullet& \circ& \circ& \qquad &  \circ& \circ& \bullet& \bullet& \qquad &  \circ& \circ& \bullet& \bullet\\
 \circ& \bullet& \circ& \circ& \qquad &  \bullet& \circ& \circ& \circ& \qquad &  \circ& \circ& \bullet& \bullet& \qquad &  \circ& \circ& \bullet& \bullet\\
 \circ& \circ& \bullet& \bullet& \qquad &  \circ& \circ& \bullet& \bullet& \qquad &  \bullet& \bullet& \circ& \circ& \qquad &  \bullet& \bullet& \circ& \circ\\
 \circ& \circ& \bullet& \bullet& \qquad &  \circ& \circ& \bullet& \bullet& \qquad &  \bullet& \bullet& \circ& \circ& \qquad &  \bullet& \bullet& \circ& \circ
\end{array}
$$

$$(f)\qquad
\begin{array}{ccccccccccccccccccc}
 \bullet& \bullet& \circ& \circ& \qquad &  \bullet& \bullet& \circ& \circ& \qquad &  \circ& \circ& \bullet& \bullet& \qquad &  \circ& \circ& \bullet& \bullet\\
 \bullet& \bullet& \circ& \circ& \qquad &  \bullet& \bullet& \circ& \circ& \qquad &  \circ& \circ& \bullet& \bullet& \qquad &  \circ& \circ& \bullet& \bullet\\
 \circ& \circ& \bullet& \bullet& \qquad &  \circ& \circ& \bullet& \bullet& \qquad &  \bullet& \bullet& \circ& \circ& \qquad &  \bullet& \bullet& \circ& \circ\\
 \circ& \circ& \bullet& \bullet& \qquad &  \circ& \circ& \bullet& \bullet& \qquad &  \bullet& \bullet& \circ& \circ& \qquad &  \bullet& \bullet& \circ& \circ
\end{array}
$$

$$(g)\qquad
\begin{array}{ccccccccccccccccccc}
  \bullet& \circ& \circ& \circ& \qquad &  \circ& \bullet& \circ& \circ& \qquad &  \circ& \circ& \bullet& \bullet& \qquad &  \circ& \circ& \bullet& \bullet\\
  \circ& \bullet& \circ& \circ& \qquad &  \circ& \circ& \bullet& \circ& \qquad &  \bullet& \circ& \circ& \bullet& \qquad &  \bullet& \circ& \circ& \bullet\\
  \circ& \circ& \bullet& \circ& \qquad &  \circ& \circ& \circ& \bullet& \qquad &  \bullet& \bullet& \circ& \circ& \qquad &  \bullet& \bullet& \circ& \circ\\
  \circ& \circ& \circ& \bullet& \qquad &  \bullet& \circ& \circ& \circ& \qquad &  \circ& \bullet& \bullet& \circ& \qquad &  \circ& \bullet& \bullet& \circ
\end{array}
$$

$$(h)\qquad
\begin{array}{ccccccccccccccccccc}
\bullet& \bullet& \circ& \circ& \qquad &  \circ& \bullet& \bullet& \circ& \qquad &  \circ& \circ& \bullet& \bullet& \qquad &  \bullet& \circ& \circ& \bullet\\
\circ& \bullet& \bullet& \circ& \qquad &  \circ& \circ& \bullet& \bullet& \qquad &  \bullet& \circ& \circ& \bullet& \qquad &  \bullet& \bullet& \circ& \circ\\
\circ& \circ& \bullet& \bullet& \qquad &  \bullet& \circ& \circ& \bullet& \qquad &  \bullet& \bullet& \circ& \circ& \qquad &  \circ& \bullet& \bullet& \circ\\
\bullet& \circ& \circ& \bullet& \qquad &  \bullet& \bullet& \circ& \circ& \qquad &  \circ& \bullet& \bullet& \circ& \qquad &  \circ& \circ& \bullet& \bullet
\end{array}
$$
\end{claim}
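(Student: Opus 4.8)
The plan is to strip the vanishing of $\per A$ down to a purely support-level condition and then run a finite enumeration, much in the spirit of Claim~\ref{clPPTV3}. Write the hyperplanes of the first direction as $H_0 = A_1^0,\dots,H_3 = A_1^3$; by hypothesis each $H_i$ is a $3$-dimensional sesquialteral permutation of order $4$, and conversely any four such matrices stacked along the first coordinate form a stochastic matrix, so no further constraint on $A$ is hidden. Since $\per A = \per(H_0,H_1,H_2,H_3)$ is a sum of nonnegative products, $\per A = 0$ holds if and only if there is no ``rainbow'' diagonal: no way to pick one index from each $\supp(H_i)$ so that the four chosen indices are pairwise distinct in all of the last three coordinates. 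This condition depends only on the four supports, so we may replace each $H_i$ by its canonical $\{0,1,\nicefrac12\}$-representative (as noted in the preliminaries this keeps the support and keeps the matrix polystochastic), and then work entirely with supports. By the symmetry of the role of the index $i$, it suffices to prove that $\supp(H_0)$ is equivalent to one of $(a)$--$(h)$.

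The next step is to list, up to the equivalence of the paper, all possible supports of a $3$-dimensional sesquialteral permutation of order~$4$. This set is finite and can be produced hyperplane by hyperplane: each $2$-dimensional hyperplane of $H_0$ in the second direction is a doubly stochastic $4\times 4$ matrix with at most two nonzero entries per line, and the supports of such matrices, viewed as bipartite graphs with total support and maximum degree~$2$, reduce up to row/column permutation to the short list ``permutation matrix / one $4$-cycle block plus two fixed points / two $4$-cycle blocks / one $6$-cycle block plus a fixed point / one $8$-cycle''. Assembling four mutually consistent hyperplanes of this kind (consistency being dictated by polystochasticity in the remaining directions) yields the full, still finite, catalogue of candidate supports $S$ for $H_0$. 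For each candidate $S$ one then asks whether there exist supports $S_1,S_2,S_3$ of sesquialteral permutations such that the quadruple $(S,S_1,S_2,S_3)$ admits no rainbow diagonal; by Proposition~\ref{perreduction} with $k=2$ this is equivalent to asking whether $S$ can be completed to a stochastic matrix all of whose quadruples of diagonally located $2$-dimensional planes have zero permanent, which turns ``admissibility'' into a bounded search over quadruples of $4\times 4$ matrices.

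It remains to carry this out and observe that the admissible $S$ are exactly $(a)$--$(h)$. For each of these one exhibits an explicit zero-permanent completion: $(a)$ and $(b)$ arise as hyperplanes of $\mathcal{M}_4^d$ and of the matrices in $\mathcal{L}_4^d$, and $(c)$--$(h)$ arise from the Child--Wanless modifications discussed in the introduction, so these supports are admissible. For every other candidate $S$ one verifies --- and this is the computational heart, analogous to how Claim~\ref{clPPTV3} was obtained --- that a rainbow diagonal necessarily exists for any choice of $S_1,S_2,S_3$, hence $\per A > 0$. The main obstacle is the sheer size and bookkeeping of this enumeration rather than any single conceptual step: one must generate the equivalence classes of $3$-dimensional sesquialteral permutations of order~$4$ and then rule out all their zero-permanent completions, and keeping the search tractable hinges on aggressively normalizing both $\supp(H_0)$ and the completions under the equivalence group (permutations of coordinate positions and of hyperplanes of a fixed direction) and on the $k=2$ reduction of Proposition~\ref{perreduction} to prune the branching. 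A minor but genuine subtlety is that ``admissible'' must be read as the \emph{existence} of a zero-permanent completion, not a property of $\supp(H_0)$ in isolation, and that the reduction to $\{0,1,\nicefrac12\}$-representatives is lossless precisely because the vanishing of the permanent depends only on the supports involved.
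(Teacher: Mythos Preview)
Your proposal is correct and matches the paper's approach: both establish the claim by a finite computer enumeration over supports of $3$-dimensional sesquialteral permutations of order~$4$, reducing $\per A=0$ to the purely combinatorial ``no rainbow diagonal'' condition on the four hyperplane supports. The paper adds the concrete detail that there are exactly $44$ such equivalence classes (coinciding with the orthogonal arrays $OA(32,3,4,2)$) and organizes the search by fixing three hyperplane supports and computing the \emph{maximal} fourth support compatible with $\per A=0$, then checking whether it contains a sesquialteral-permutation support, rather than fixing one hyperplane and ranging over the other three as you do.
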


Claim~\ref{cl1} is verified by computer  calculation.  Firstly, we enumerate all supports of $3$-dimensional  sesquialteral permutations of order $4$ and find that they constitute $44$ equivalence classes. Note that these supports are exactly the orthogonal arrays $OA(32,3,4,2)$ that were previously enumerated in~\cite{OApaper} (see case $OA(32;2;4^3)$).   Next we choose  three  (possibly the same) supports of $3$-dimensional  sesquialteral permutations of order $4$,  take  matrices equivalent to them as supports of three hyperplanes of the first direction in a  $4$-dimensional stochastic matrix $A$ of  order $4$,  and consider the  maximal support  in  the last hyperplane $\Gamma$ of $A$ so that $\per A = 0$.  To verify that there is a $3$-dimensional polystochastic matrix $B$ of order $4$ such that $\supp (B) \subseteq \supp (\Gamma)$,  it is sufficient to consider only sesquialteral permutations that cannot be expressed as a convex sum of $3$-dimensional permutations of order $4$.

Let us give more details on matrices from the list of Claim~\ref{cl1}.  Matrices $(a)$  and $(b)$ are  supports of multidimensional permutations corresponding to the Cayley tables of groups $\mathbb{Z}_2^2$ and $\mathbb{Z}_4$ (the matrix $\mathcal{M}_4^3$), respectively. Matrices  $(f)$ and $(h)$ are the supports of double permutations. Moreover, matrices  $(c)$--$(f)$ are obtained as a union of supports of two multidimensional permutations  of  type $(a)$, and  supports of matrices $(g)$ and $(h)$ are  a union of supports  of two  permutations corresponding to a matrix type $(b)$.

By Proposition~\ref{perreduction}, if the permanent of a $d$-dimensional  sesquialteral permutation $A$  of order $4$ is zero, then  the support of every $3$-dimensional plane of $A$ is equivalent to a matrix from the list of Claim~\ref{cl1}.  So we call sesquialteral permutations with $3$-dimensional planes satisfying this property \textit{suspicious}.

Following \cite{K2008},  we introduce the several operations for index subsets  and multidimensional matrices. 

Let $S \subseteq I_4^d$ be a set of indices. For $i \in \{ 1, \ldots, d\} $, let  ${\mathcal T}_i(S)$ denote the union of all lines of direction $i$ that intersect the set $S$. We also define the \textit{complement of the set $S$ in direction $i$} as  $ \displaystyle\backslash_i S =  {\mathcal T}_i(S)\backslash S $. Note that if $U$ is the support of a unitrade of order $4$, then for every $i \in \{1, \ldots, d \}$ the set $\displaystyle\backslash_i U$ is also  the support of a unitrade.  

Given a $d$-dimensional unitrade $U$  of order $4$ and  $y \in \{ 0,1\}^d$, denote $\backslash_{y} U =\backslash_{j_1}{\cdot}{\cdot}{\cdot}\backslash_{j_k} U$, where $j_1, \ldots, j_k$ are all indices such that $y_{j_1} = \cdots = y_{j_1} = 1$.  Define $E(U)=\bigcup\limits_{w(y) \equiv 0 \mod 2} \backslash_{y} U$ to be the union of all complements of $U$ in an even number of directions. We will say that $E(U)$ is the \textit{even completion} of $U$. The definition implies that $U \subseteq E(U)$.   In~\cite[Proposition 3.7(d)]{K2008} it is proved  that the even completion of  every unitrade  is a double permutation.  

Let $A$ be a $d$-dimensional double permutation of order $4$. We will say that directions $i$ and $i'$ are \textit{equivalent} for $A$ if there exists a connected unitrade $S \subseteq \supp(A)$ such that   $\backslash_{i}S=\backslash_{i'}S$.   In~\cite{K2008} it is shown that this condition does define an equivalence relation ${\stackrel{\scriptscriptstyle A}{\sim}}$  which does not depend on the choice  of $S$ and divides the set $\{ 1, \ldots, d\}$ into equivalence classes $K_j$. 

At last, given a $d_1$-dimensional $(0,1)$-matrix $A = (a_{\alpha})_{\alpha \in I_n^{d_1}}$ order $n$ and  $d_2$-dimensional $(0,1)$-matrix $B = (b_{\beta})_{\beta \in I_n^{d_2}}$ order $n$,   the \textit{direct sum} $A \oplus B$ is a $(d_1 + d_2)$-dimensional matrix $C $ of order $n$ with entries $c_{\alpha, \beta} = a_{\alpha} \oplus b_{\beta}$, where $1 \oplus 0 = 0 \oplus 1 = 1$ and $ 0 \oplus 0 = 1 \oplus 1 = 0$. It is easy to see that if $A$ and $B$ are double permutations, then $A \oplus B$ is a double permutation. 

For the future, we need the following  auxiliary result.

\begin{proposition}[\cite{K2008}, Theorem 4-1]\label{PPTVprop06}
Let $A$ be a $d$-dimensional double permutation of order $4$. Then there exist  $d_j$-dimensional connected double permutations $ B_j $, $j = 1, \ldots, k$ $\sum\limits_{j=1}^k d_j = d$, such that $A = \bigoplus\limits_{j=1}^k B_j$ and  the coordinate set of each $B_j$ is the  equivalence class for the relation ${\stackrel{\scriptscriptstyle A}{\sim}}$.  Moreover, the support of the matrix $A$ is a bitrade if and only if the support of every $B_j$ is a bitrade.
\end{proposition}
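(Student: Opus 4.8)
\emph{Overview.} The claim has two parts — the direct-sum decomposition along the coordinate classes, and the passage of the bitrade property to and from the summands — and I would prove the decomposition first, reusing the complementation operators $\backslash_i$ for the transfer. The first ingredient is a criterion for when $A$ splits along a prescribed bipartition $\{1,\dots,d\}=I\sqcup J$: writing $f=\mathbf 1_{\supp(A)}$, I claim $A$ is a direct sum $B\oplus C$ with $B$ on the coordinates in $I$ and $C$ on those in $J$ if and only if $f$ has no ``mixed square'' modulo $2$, i.e.\ $f(\beta,\gamma)+f(\beta',\gamma)+f(\beta,\gamma')+f(\beta',\gamma')\equiv 0\pmod 2$ for all $\beta,\beta'$ on $I$ and $\gamma,\gamma'$ on $J$. ``Only if'' is immediate from $f(\beta,\gamma)=\mathbf 1_{\supp(B)}(\beta)\oplus\mathbf 1_{\supp(C)}(\gamma)$; for ``if'' one fixes a base point, reads $\supp(B)$ and $\supp(C)$ off the corresponding fibres of $f$, and checks directly that every line of $B$ and of $C$ still carries exactly two cells, so $B$ and $C$ are genuine double permutations. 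The core of the argument is to match this with $\stackrel{A}{\sim}$. For ``$A=B\oplus C\Rightarrow$ no class crosses $I\sqcup J$'': the line graph of a unitrade of order $4$ is $d$-regular (each cell has exactly one partner in each direction), so any nonempty connected unitrade $S$ extends in every direction; and if $S\subseteq\supp(B\oplus C)$, moving along $S$ in a direction of $I$ fixes $\mathbf 1_{\supp(C)}$ while moving in a direction of $J$ fixes $\mathbf 1_{\supp(B)}$, so $\mathbf 1_{\supp(B)}$ is constant on the $I$-projection of $S$. A short computation then shows the nonempty sets $\backslash_iS$ and $\backslash_{i'}S$ lie on opposite sides of $\supp(B)$ when $i\in I$, $i'\in J$, hence $\backslash_iS\neq\backslash_{i'}S$; as the relation does not depend on $S$, no class of $\stackrel{A}{\sim}$ meets both parts. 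For the converse, I argue by contraposition: if the mixed-square condition fails then, telescoping along coordinate-by-coordinate paths, it already fails for an ``atomic'' square living in one $2$-dimensional plane $P$ of $A$ in some pair of directions $i\in I$, $i'\in J$; a $2$-dimensional double permutation of order $4$ has a failing square exactly when it is \emph{not} a direct sum along its two coordinates, which is exactly the connected case (the disconnected ones being pairs of $4$-cycles), so $\supp(P)$ is a connected unitrade inside $\supp(A)$ with $\backslash_i\supp(P)=P\setminus\supp(P)=\backslash_{i'}\supp(P)$, forcing $i\stackrel{A}{\sim}i'$.

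\emph{Assembling the decomposition.} Applying the criterion to $K_1$ against $\{1,\dots,d\}\setminus K_1$ gives $A=B_1\oplus C$. The relation transports upward from $C$: if a connected unitrade $S\subseteq\supp(C)$ witnesses $i\sim i'$ in $C$ and $X$ is a connected component of the unitrade $\overline{\supp(B_1)}$, then $X\times S\subseteq\supp(A)$ is a connected unitrade with $\backslash_i(X\times S)=X\times\backslash_iS=X\times\backslash_{i'}S=\backslash_{i'}(X\times S)$, so $i\stackrel{A}{\sim}i'$; by contraposition the classes $K_2,\dots,K_k$ remain pairwise separated inside $C$, and induction yields $A=\bigoplus_{j=1}^{k}B_j$ with $B_j$ on $K_j$. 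Finally each $B_j$ is directly indecomposable: a nontrivial splitting of $K_j$ would make $A$ a direct sum separating two coordinates of the single class $K_j$, contradicting the first direction above; and applying the converse direction to $B_j$ itself, indecomposability forces $\stackrel{B_j}{\sim}$ to have a single class, which is the intended sense of a connected double permutation.

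\emph{Transfer of the bitrade property.} I would use two facts about $\backslash_i$. First, $\backslash_i$ sends bitrades to bitrades: a sign function for $\backslash_iU$ is obtained by flipping, inside each active direction-$i$ line, the sign function of $U$, compatibility in the remaining directions being a line-by-line check (this is essentially contained in \cite{K2008}); consequently, since $\backslash_\ell\supp(B_j)=\overline{\supp(B_j)}$ for any coordinate $\ell$ of a double permutation $B_j$, the complement of $\supp(B_j)$ is a bitrade iff $\supp(B_j)$ is. Second, a restriction of a bitrade — freeze the coordinates outside one block — is a bitrade with the restricted sign function. If $\supp(A)$ is a bitrade, freeze the coordinates outside $K_j$ so that the fibre equals exactly $\supp(B_j)$ (take them outside every $\supp(B_{j'})$, $j'\neq j$) to conclude $\supp(B_j)$ is a bitrade. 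Conversely, if each $\supp(B_j)$ is a bitrade then so is its complement, and combining the two sign functions gives a map $\tau_j\colon I_4^{K_j}\to\{\pm1\}$ restricting to a sign function on both $\supp(B_j)$ and its complement; then $\sigma_A(\alpha)=\prod_{j}\tau_j(\alpha_{K_j})$ is a sign function on $\supp(A)$, because on a line in a direction of $K_{j_0}$ the two cells of $\supp(A)$ have $K_{j_0}$-parts equal to the two cells of some line of $B_{j_0}$ lying entirely in $\supp(B_{j_0})$ or entirely in its complement (according to the parity contributed by the other blocks), and $\tau_{j_0}$ separates either pair.

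\emph{Main obstacle.} The delicate step is the identification of $\stackrel{A}{\sim}$ with direct-sum indecomposability: it relies both on the $d$-regularity and the homogeneity of connected unitrades and on the order-$4$ classification of $2$-dimensional double permutations, and it is precisely here that the special features of order $4$ — two cells in and two cells out of every line — are essential. The other point requiring care is the bookkeeping that lets the relation descend cleanly to the summands under $\oplus$, which is what makes the induction in the second paragraph run.
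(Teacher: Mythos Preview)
The paper does not give a proof: this proposition is quoted as Theorem~4-1 of \cite{K2008} and used as a black box in the proof of Lemma~\ref{corPPTV1}. So there is no argument in the paper to compare yours against; you are supplying an independent proof of Krotov's result.

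Your overall architecture --- the mixed-square criterion for a direct-sum splitting, its identification with $\stackrel{A}{\sim}$, the inductive assembly, and the transfer of the bitrade property through the summands --- is sound, and the forward implication (a splitting $I\sqcup J$ prevents any $\stackrel{A}{\sim}$-class from crossing the bipartition) is correctly argued, as is the bitrade transfer. The genuine gap is in the converse step identifying $\stackrel{A}{\sim}$ with indecomposability. After telescoping a failing mixed square down to a single $2$-dimensional plane $P$ in directions $i\in I$, $i'\in J$, you take $S=\supp(P)$ (the $8$-cycle) as the witnessing connected unitrade with $\backslash_i S=\backslash_{i'}S$. But $\supp(P)$ is \emph{not} a unitrade in $I_4^d$: every line in a direction $\ell\notin\{i,i'\}$ through a point of $P$ leaves the plane and hence meets $\supp(P)$ in exactly one point, not zero or two. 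The definition of $\stackrel{A}{\sim}$ (both here and in \cite{K2008}) requires $S$ to be a $d$-dimensional unitrade, so this step is illegitimate as written. Replacing $\supp(P)$ by the connected component $S$ of $\supp(A)$ containing the $8$-cycle does give a valid unitrade, but then you must show $\backslash_iS=\backslash_{i'}S$ globally; equality on the single plane $P$ does not suffice, and a parallel $(i,i')$-plane $P'$ of type~(F) in which $S$ meets only one of the two $4$-cycles would give $\backslash_iS\cap P'\neq\backslash_{i'}S\cap P'$. Ruling this out --- essentially showing that the (F)/(H) type is constant across parallel planes once one works inside a connected component --- is exactly the structural analysis carried out in \cite{K2008}, and your sketch does not reproduce it.
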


Since  $U \subseteq E(U)$, to prove Lemma~\ref{Ubitrade} it is sufficient to establish the following result.

\begin{lemma}\label{corPPTV1}
Let $A$ be a  suspicious  $d$-dimensional  sesquialteral permutation of order $4$,  $U = U(A)$ be the unitrade of $A$, and $E(U)$ be the double permutation equal to the even completion of $U$. Then $E(U)$ is a   bitrade. 
\end{lemma}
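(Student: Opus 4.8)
The plan is to reduce the claim to connected double permutations via Proposition~\ref{PPTVprop06} and then to verify the bitrade property for connected pieces by exploiting the structure of their $3$-dimensional planes, which is heavily constrained since $A$ is suspicious. First I would invoke Proposition~\ref{PPTVprop06} to write $E(U) = \bigoplus_{j=1}^k B_j$ with each $B_j$ a connected double permutation whose coordinate set is an equivalence class $K_j$ of $\stackrel{\scriptscriptstyle A}{\sim}$; since the support of $E(U)$ is a bitrade iff each $\supp(B_j)$ is, it suffices to treat a single connected component. So I may assume $E(U)$ itself is a connected double permutation, and I need to show its support is a bitrade.

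The key observation is that the even completion commutes with taking planes: for any $k$-dimensional plane $\Gamma$ of $A$, the corresponding plane of $E(U)$ is the even completion of $U(\Gamma)$ (inside that plane), and $U(\Gamma)$ is still a suspicious unitrade coming from a sesquialteral permutation, because the restriction of $A$ to $\Gamma$ is again a sesquialteral permutation whose $3$-dimensional planes appear among those of $A$ and hence are on the list of Claim~\ref{cl1}. Thus I would argue by induction on $d$. The base cases $d=2,3$ should be checked directly: for $d=3$ one examines the list $(a)$--$(h)$ of Claim~\ref{cl1}, notes which supports arise as even completions of unitrades of sesquialteral permutations, and verifies each such $3$-dimensional support is a bitrade — this is a finite check (the relevant supports being the double permutations $(f)$, $(h)$, and the even completions of the permutation-type supports $(a)$, $(b)$, which are themselves among $(c)$--$(h)$). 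Then I would apply Lemma~\ref{thPPTV1}: with $k=2$, if every $2$-dimensional and every $3$-dimensional plane of $E(U)$ has support a connected bitrade, then $\supp(E(U))$ is a connected bitrade. Connectedness of the planes will follow from $E(U)$ being connected together with the fact (used already in the proof of Lemma~\ref{thPPTV1}) that a double permutation all of whose lower-dimensional planes contain connected unitrades is itself built from connected pieces; alternatively one restricts to the connected sub-double-permutation containing a fixed cell. The bitrade property of the $2$- and $3$-dimensional planes is exactly the base case of the induction applied to $U(\Gamma)$ for the corresponding plane $\Gamma$ of $A$.

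More carefully, to make Lemma~\ref{thPPTV1} applicable I need every $2$- and $3$-dimensional plane of $E(U)$ to be a \emph{connected} bitrade, and a plane of $E(U)$ need not be connected even when $E(U)$ is. I would handle this by again passing through Proposition~\ref{PPTVprop06} at the level of planes: each $2$- or $3$-dimensional plane of $E(U)$ decomposes as a direct sum of connected double permutations, and it is a bitrade iff each summand is; each summand is the even completion of a (possibly lower-dimensional) suspicious unitrade, to which the base-case analysis applies. So the real content is: (i) the commutation of $E(\cdot)$ with plane restriction; (ii) the finite verification that every $2$- and $3$-dimensional support arising as $E(U)$ of a suspicious sesquialteral unitrade is a bitrade — equivalently, a direct sum of the connected $2$- and $3$-dimensional double permutations that are bitrades; (iii) assembling via Lemma~\ref{thPPTV1}.

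The main obstacle I expect is step (ii), the $3$-dimensional base case: one must check that the even completion of each suspicious $3$-dimensional sesquialteral unitrade is bipartite, and in particular rule out that some suspicious unitrade has an odd cycle surviving in its even completion — the danger being exactly a configuration like the support of $\mathcal{M}_4^3$ (type $(b)$), whose completion could a priori fail to be a bitrade. In fact the non-bitrade connected double permutations of order $4$ are known (they are related to $\mathcal{M}_4^d$), so I would cross-check the list $(a)$--$(h)$ against the classification of connected non-bitrade double permutations and confirm that none of the even completions in question is of that exceptional form; this is where I would lean on the enumeration behind Claim~\ref{cl1} and on~\cite{K2008}. A secondary subtlety is ensuring the induction is set up so that at dimension $d$ I only ever invoke the statement for \emph{planes} of $A$, whose restricted matrices are genuinely sesquialteral permutations of lower dimension with $3$-planes on the list — this is immediate from Proposition~\ref{perreduction} and the definition of suspicious, so it should not cause trouble.
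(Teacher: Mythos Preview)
Your overall architecture --- decompose $E(U)$ via Proposition~\ref{PPTVprop06} into connected pieces $B_j$, then verify each $B_j$ is a bitrade by controlling its low-dimensional planes and invoking Lemma~\ref{thPPTV1} --- is the same as the paper's. The commutation of even completion with restriction to planes is also correct, though you should note that the reason is not that $\backslash_y$ commutes with restriction (it does not when $y$ touches the fixed coordinates), but rather that both $(E(U))_\Gamma$ and $E_{\dim\Gamma}(U_\Gamma)$ are double permutations and the latter is contained in the former, forcing equality.

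The genuine gap is in your handling of connectedness. Lemma~\ref{thPPTV1} requires every $2$- and $3$-dimensional plane of the double permutation under consideration to be a \emph{connected} bitrade. Your base-case check yields only that each such plane is a bitrade: indeed, the $3$-dimensional even completions that arise are (up to equivalence) the matrices $(f)$, $(h)$, and $(i)$, and while all three are bitrades, $(f)$ and $(i)$ are \emph{not} connected (their $2$-dimensional planes of type $(F)$ split into two $2\times 2$ blocks). Your proposed fix --- decomposing each plane via Proposition~\ref{PPTVprop06} and checking the summands --- tells you the plane is a bitrade, but it does not make the plane connected, so you still cannot fire Lemma~\ref{thPPTV1} on $B_j$.

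What the paper does, and what your outline is missing, is a structural step identifying the equivalence classes $K_j$ with the $2$-dimensional plane types. One checks that the $2$-dimensional planes of $E(U)$ come in two types $(F)$ and $(H)$, that all planes of a fixed direction have the same type, and (from the list $(f),(h),(i)$ of possible $3$-dimensional planes) that no $K_3$ has exactly two $(H)$-edges; hence the $(H)$-edges form cliques, and one verifies these cliques are exactly the classes $K_j$. Consequently, inside each connected $B_j$ with $d_j\ge 2$, \emph{all} $2$-dimensional planes are of type $(H)$ and all $3$-dimensional planes are of type $(h)$, both of which are connected bitrades --- and now Lemma~\ref{thPPTV1} applies. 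This edge-coloring argument is the piece you need to add; without it the induction does not close.
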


\begin{proof}
By Proposition~\ref{PPTVprop06}, there exist $d_j$-dimensional connected double permutations $ B_j $, $j = 1, \ldots, k$, $\sum\limits_{j =1}^k d_j = d$, such that  the double permutation $E(U) = \bigoplus\limits_{j=1}^k B_j$, where  the position set of each $B_j$ is the  equivalence class of $E(U)$  under the relation ${\stackrel{\scriptscriptstyle E(U)}{\sim}}$. 

We need to show that all $B_j$ are bitrades. For this purpose, we describe the equivalence relation ${\stackrel{\scriptscriptstyle E(U)}{\sim}}$ and  take a closer look at planes of $E(U)$.

Firstly, note that  each  $3$-dimensional plane of the even completion $E(U)$ of $U$ is equivalent to a matrix $(f)$, $(h)$ or $(i)$, where
$$(i)\qquad
\begin{array}{ccccccccccccccccccc}
  \bullet& \bullet& \circ& \circ& \qquad &\bullet& \bullet& \circ& \circ&  \qquad &  \circ& \circ& \bullet& \bullet& \qquad &  \circ& \circ& \bullet& \bullet\\
  \circ& \bullet& \bullet& \circ& \qquad &\circ& \bullet& \bullet& \circ&  \qquad &  \bullet& \circ& \circ& \bullet& \qquad &  \bullet& \circ& \circ& \bullet\\
  \circ& \circ& \bullet& \bullet& \qquad &\circ& \circ& \bullet& \bullet&  \qquad &  \bullet& \bullet& \circ& \circ& \qquad &  \bullet& \bullet& \circ& \circ\\
  \bullet& \circ& \circ& \bullet& \qquad &\bullet& \circ& \circ& \bullet&  \qquad &  \circ& \bullet& \bullet& \circ& \qquad &  \circ& \bullet& \bullet& \circ
\end{array}.
$$
To prove this fact, it is sufficient to check that the even completion of the unitrade $U$ for every $3$-dimensional sesquialteral permutation with the support from the list of Claim~\ref{cl1} is equivalent to a matrix $(f)$, $(h)$ or $(i)$.

Then  every $2$-dimensional plane of $E(U)$ is equivalent to 
 $$(F)\quad
 \begin{array}{cccc}
 \bullet & \bullet & \circ & \circ\\
 \bullet & \bullet & \circ& \circ\\
 \circ& \circ & \bullet& \bullet\\
 \circ& \circ& \bullet & \bullet
 \end{array} \quad\mbox{or} 
 \qquad(H)\quad 
 \begin{array}{cccc}
 \bullet & \bullet & \circ& \circ\\
 \circ & \bullet& \bullet& \circ\\
 \circ& \circ & \bullet& \bullet\\
 \bullet& \circ& \circ& \bullet
 \end{array}
 $$
Moreover, if some $2$-dimensional plane of $E(U)$ has type  (F) (or type (H)),  then all $2$-dimensional planes of the same direction have type (F) (type (H)), because it is true for matrices $(f)$, $(h)$, and $(i)$.
 
 We present the types of $2$-dimensional planes  of the $d$-dimensional double permutation $E(U)$  as  an edge coloring $\varphi$ of the complete graph $K_d$, where $\varphi(i,j) = F$ if all $2$-dimensional planes of direction $(i,j)$ in $E(U)$ have type (F) and  $\varphi(i,j) = H$ if all  such planes  have type (H).  
 
Note that the matrix $(f)$ corresponds to a coloring of $K_3$ whose all edges have color F,  the matrix $(h)$ corresponds to a coloring of edges of $K_3$ in color H, and the matrix $(i)$ is a coloring of two edges of $K_3$ in color F, and the last edge has color H.
 So the condition on the $3$-dimensional planes of $E(U)$ implies that the coloring $\varphi$ has no subgraphs $K_3$ in which one  edge has color F  and two edges are of color  H.  It means that if two edges of color H are adjacent, then the third edge completing them to $K_3$  also has color H. Therefore, edges of color H in $\varphi$ compose cliques  (complete subgraphs) connected by edges of color F.

Note that if for some $i$ and for all $j$ an edge $(i,j)$
has type (F)  in $\varphi$,  then the equivalence class containing $i$ has size $1$  and $B_i$ is a $1$-dimensional bitrade.

We  show that directions $i {\stackrel{\scriptscriptstyle E(U)}{\sim}} j$    if the edge $(i,j)$ in $\varphi$ has type (H) and $i$  is not equivalent to $ j$ if the edge $(i,j)$ has type (F). Indeed, consider a connected component $S$ of $E(U)$  and assume that all $2$-dimensional planes of $E(U)$ in direction $(i,j)$ have type (F).  Then the component $S$ has all $2$-dimensional planes equivalent to the matrix 
$$
 \begin{array}{cccc}
 \circ & \circ &  \circ & \circ \\
 \circ & \circ &  \circ& \circ\\
 \circ& \circ& \bullet& \bullet\\
 \circ& \circ& \bullet& \bullet
 \end{array}
 $$
and  it is easy to see that  $\backslash_{i} S \neq \backslash_{i'} S$. 

Let us now show that all connected double permutations $B_j$ are bitrades. If $B_j$ is a $1$-dimensional double permutation, then, by the definition, it is a bitrade. 
As we proved above, if the dimension $d_j$ of $B_j$  is at least  $2$, then every $2$-dimensional plane of  $B_j$  has type (H). If $d_j \geq 3$, then the support of every $3$-dimensional plane of $B_j$  is equivalent to the matrix $(h)$, so it is also a connected bitrade. Thus Lemma~\ref{thPPTV1} implies that all $B_j$ are bitrades.

To prove that   $E(U)$ is a bitrade, it remains to apply  Proposition~\ref{PPTVprop06}. 
\end{proof}

\textit{Remark.}  Claim~\ref{cl1}  is used only to obtain that for a  given suspicious sesquialteral permutation  of order $4$ all $2$-dimensional planes of the same direction  of  the  even completion $E(U)$ of  its unitrade $U$  have the same type. The independent proof of this fact  will make  unnecessary the computation of all possible $3$-dimensional planes in Claim~\ref{cl1}.

\section{Convex sum of linear multidimensional permutations} \label{convexsumsec}

To characterize, when a convex sum of permutations $\mathcal{M}_4^d$ has a zero permanent, we consider a more general class of permutations, which we call block permutations. Block permutations are equivalent to latin hypercubes corresponding to semilinear quasigroups.  To define and handle this class, we introduce addition notation as a modification of the notation from~\cite{AAT18} for multidimensional permutations.

Recall that a  tuple $\mathcal{E} \in \{ 1,2,3\}^d$,  $\mathcal{E} = (\varepsilon_1, \ldots, \varepsilon_d)$,  is a partition of a $d$-dimensional matrix of order $4$ into $2^d$ copies of $d$-dimensional subcubes $C_y$, $y \in \{0, 1 \}^d$ of order $2$, where $\mathcal{P}_{\varepsilon_i}$ is the partition of coordinates in the $i$th position. 

To every partition  $\mathcal{P}_i$, $i = 1, 2,3$, of $\{ 0, 1,2,3\}$ we  associate a parity function $\mu_i: \{0, 1, 2,3 \}  \rightarrow \{ 0,1\}$ so that $\mu_i(0) = 0$ for all $i$ and in each part of $\mathcal{P}_i$ the function $\mu$ takes both values. To be precise, let
\begin{gather*}
\mu_1 (0) = \mu_1(2) = 0; ~~~ \mu_1(1) = \mu_1(3) = 1; \\
\mu_2 (0) = \mu_2(1) = 0; ~~~ \mu_2(2) = \mu_2(3) = 1; \\
\mu_3 (0) = \mu_3(2) = 0; ~~~ \mu_3(1) = \mu_3(3) = 1. 
\end{gather*}
Next, let $Q_0^d$ be the set of Boolean vectors $y \in \{ 0, 1\}^d$ such that the weight of $y$ is even, and $Q_1^d$ be the set of all Boolean vectors from $\{ 0, 1\}^d$ with odd weights, $Q_0^d \cup Q_1^d =\{ 0, 1\}^d $. 

We will say that $A$ is  a $d$-dimensional \textit{block permutation} of order $4$ \textit{with parameters $(\mathcal{E}, \lambda, s)$}, where $\mathcal{E} \in \{ 1,2,3\}^d$, $\mathcal{E} = (\varepsilon_1, \ldots, \varepsilon_d)$, $s \in \{ 0, 1\}$,  and $\lambda: Q_s^d \rightarrow \{ 0,1\}$ is a Boolean function, if 
\begin{gather*} 
a_{\alpha} = 1 \Leftrightarrow   \\ 
 p_{\varepsilon_1} (\alpha_1) \oplus \cdots \oplus    p_{\varepsilon_d} (\alpha_d)  = s \mbox { and} \\ 
 \mu_{\varepsilon_1} (\alpha_1) \oplus \cdots \oplus \mu_{\varepsilon_d} (\alpha_d)  \oplus \lambda(p_{\varepsilon_1} (\alpha_1), \ldots, p_{\varepsilon_d} (\alpha_d))  = 0.
 \end{gather*}
 Informally speaking, firstly a tuple $\mathcal{E}$ defines a partition of a $d$-dimensional matrix of order $4$ into $2^d$ copies of $d$-dimensional subcubes $C_y$  of order $2$, $y \in \{0, 1 \}^d$.  Next, the value $s$ specifies which subcubes $C_y$ contain a $d$-dimensional permutation of order $2$: if $s = 0$, then the subcube $C_y$ is a permutation for every $y$, $w(y)$ is even, and if $s = 1$, then the same holds for $y$ of odd weights.  We will say that subcubes $C_y$ containing a permutation are \textit{filled}, and subcubes $C_y$  equal to the zero matrix are \textit{empty}.  At last, the Boolean function $\lambda$ is responsible for which one of two $d$-dimensional permutations is chosen in a subcube $C_y$.  
 
It can be checked that this construction of block permutations  does give multidimensional permutations. Also, if for a block permutation  $A$ a partition $\mathcal{E}$ is fixed, then $s$ and the Boolean function $\lambda$ are unique.  Meanwhile, there are  multidimensional permutations that have more than one presentation  in a block form with different partitions $\mathcal{E}$ (their characterization is given in~\cite{PK06}).

Let us show that the permutation $\mathcal{M}^d_4$ can be presented as a block permutation, and, moreover, it has  unique parameters.

\begin{proposition} \label{Md4blockform} 
A $d$-dimensional permutation $\mathcal{M}_4^d$  is a block permutation with the unique parameters $((2, \ldots, 2), \lambda_M, 0)$, where $\lambda_M: Q_0^d \rightarrow \{ 0,1\}$ is such that $\lambda_M (x) = 0$ if $w(x) \equiv 0 \mod 4$ and $\lambda_M (x) = 1$ if $w(x) \equiv 2 \mod 4$. 
\end{proposition}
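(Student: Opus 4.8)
The plan is to verify the claimed block presentation directly from the defining congruence $\alpha_1+\cdots+\alpha_d\equiv 0\pmod 4$ of $\mathcal{M}_4^d$, and then to argue uniqueness by invoking the stated fact that, once the partition tuple $\mathcal{E}$ is fixed, the pair $(s,\lambda)$ is determined; so the real content of uniqueness is that $\mathcal{E}=(2,\ldots,2)$ is forced. First I would introduce, for each coordinate $\alpha_i\in\{0,1,2,3\}$, the decomposition relative to partition $\mathcal{P}_2$: writing $p_2(\alpha_i)$ and $\mu_2(\alpha_i)$ as in the text, one has $p_2(\alpha_i)=0$ for $\alpha_i\in\{0,2\}$ and $p_2(\alpha_i)=1$ for $\alpha_i\in\{1,3\}$, while $\mu_2(\alpha_i)=0$ for $\alpha_i\in\{0,1\}$ and $\mu_2(\alpha_i)=1$ for $\alpha_i\in\{2,3\}$. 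The key elementary observation is the arithmetic identity $\alpha_i \equiv p_2(\alpha_i) + 2\,\mu_2(\alpha_i) \pmod 4$, which one checks on all four values. Summing over $i$ gives $\alpha_1+\cdots+\alpha_d \equiv \sum_i p_2(\alpha_i) + 2\sum_i \mu_2(\alpha_i)\pmod 4$.

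Next I would read off the two defining conditions of a block permutation with $\mathcal{E}=(2,\ldots,2)$, $s=0$ from this congruence. Reducing modulo $2$, the condition $\mathcal{M}_4^d$'s entry equals $1$ forces $\sum_i p_2(\alpha_i)\equiv 0\pmod 2$, i.e.\ $p_2(\alpha_1)\oplus\cdots\oplus p_2(\alpha_d)=0=s$; this identifies $s=0$ and says $p_2$-vector $x:=(p_2(\alpha_1),\ldots,p_2(\alpha_d))$ lies in $Q_0^d$. Given this, write $\sum_i p_2(\alpha_i) = w(x)$, which is even, so $\sum_i p_2(\alpha_i)\in\{0,2\}\pmod 4$ according to $w(x)\equiv 0$ or $2\pmod 4$. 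Therefore $\alpha_1+\cdots+\alpha_d\equiv 0\pmod 4$ is equivalent, on the subcube indexed by $x$, to $2\bigl(\sum_i\mu_2(\alpha_i)\bigr) + w(x)\equiv 0\pmod 4$, i.e.\ to $\bigl(\mu_2(\alpha_1)\oplus\cdots\oplus\mu_2(\alpha_d)\bigr)\oplus\lambda_M(x)=0$ with $\lambda_M(x)=0$ when $w(x)\equiv 0\pmod 4$ and $\lambda_M(x)=1$ when $w(x)\equiv 2\pmod 4$. This is exactly the second defining condition with the claimed $\lambda_M$, so $\mathcal{M}_4^d$ is the block permutation with parameters $((2,\ldots,2),\lambda_M,0)$.

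For uniqueness, by the remark quoted just before the proposition it suffices to show no other partition tuple $\mathcal{E}$ yields a block presentation of $\mathcal{M}_4^d$. I would argue coordinatewise: fix a position $i$ and consider the restriction of $\mathcal{M}_4^d$ to a line in direction $i$ (all other coordinates fixed). The support of $\mathcal{M}_4^d$ meets such a line in exactly one point, and as the fixed coordinates vary the single nonzero position $\alpha_i$ runs through all of $\{0,1,2,3\}$; more importantly, consider the induced equivalence on the four values of $\alpha_i$: a block presentation with partition $\mathcal{P}_{\varepsilon_i}$ in position $i$ forces $p_{\varepsilon_i}(\alpha_i)$ to be an affine function of the remaining $p$-values, which pins down which pairs of values of $\alpha_i$ can occur "in the same subcube." Concretely, because in $\mathcal{M}_4^d$ changing $\alpha_i$ by $2$ (mod $4$) while changing some other $\alpha_j$ by $2$ preserves membership in the support, the pair $\{\alpha_i,\alpha_i+2\}$ must lie in the same part of $\mathcal{P}_{\varepsilon_i}$, i.e.\ $\{0,2\}$ and $\{1,3\}$ are parts; this is precisely $\mathcal{P}_2$, forcing $\varepsilon_i=2$. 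Doing this for every $i$ gives $\mathcal{E}=(2,\ldots,2)$, and then $s$ and $\lambda$ are unique by the cited fact, with the values computed above.

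The step I expect to be the main obstacle is the uniqueness of $\mathcal{E}$: one has to rule out cleanly that some mixed tuple could still reproduce the same support, and the cleanest route is the "$\alpha_i\mapsto\alpha_i+2$" symmetry argument sketched above, but making it airtight requires checking that this symmetry of the support genuinely forces $\{0,2\}$, $\{1,3\}$ to be a part of $\mathcal{P}_{\varepsilon_i}$ rather than merely being consistent with it — i.e.\ exhibiting, for each competing partition $\mathcal{P}_1$ or $\mathcal{P}_3$ in some position, an explicit violation of the block-permutation conditions. The verification of the arithmetic identity $\alpha_i\equiv p_2(\alpha_i)+2\mu_2(\alpha_i)\pmod 4$ and the bookkeeping $w(x)\bmod 4$ are routine and I would present them tersely.
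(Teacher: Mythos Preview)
Your existence argument is correct and genuinely different from the paper's. The identity $\alpha_i\equiv p_2(\alpha_i)+2\mu_2(\alpha_i)\pmod 4$ gives a clean direct computation that the paper bypasses entirely: the paper instead argues by induction on $d$, verifies $d=3$ by hand, and for the inductive step observes that every hyperplane of $\mathcal{M}_4^d$ is equivalent to $\mathcal{M}_4^{d-1}$ and that restricting a block presentation of $\mathcal{M}_4^d$ to a hyperplane yields a block presentation with the truncated partition tuple. Uniqueness for $d-1$ then fixes $(\varepsilon_1,\dots,\varepsilon_{d-1})$, and varying the direction of the hyperplane fixes $\varepsilon_d$ as well. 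Your arithmetic route is more transparent for existence; the paper's inductive route is tidier for uniqueness and avoids any case analysis on $\mathcal{E}$.

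One caution on your uniqueness sketch: the ``add $2$ in positions $i$ and $j$'' symmetry of $\supp(\mathcal{M}_4^d)$ does not by itself force $\{\alpha_i,\alpha_i{+}2\}$ into a single part of $\mathcal{P}_{\varepsilon_i}$. If \emph{both} $\varepsilon_i\neq 2$ and $\varepsilon_j\neq 2$, then the $+2$ shift flips both $p$-values, sending $\alpha$ to a \emph{different} filled subcube with no contradiction; what the parity of the subcube label actually yields is only the dichotomy $\varepsilon_i=2\Leftrightarrow\varepsilon_j=2$, so either all $\varepsilon_i$ equal $2$ or none do. The ``none'' case still has to be killed separately, e.g.\ by the explicit violation you already propose as a fallback (for any $\mathcal{E}'$ with every $\varepsilon'_i\in\{1,3\}$, the subcube containing $(0,\dots,0)$ meets $\supp(\mathcal{M}_4^d)$ in strictly between $0$ and $2^{d-1}$ points, so it is neither empty nor filled). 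With that patch your argument goes through; the paper's induction simply sidesteps the issue.
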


\begin{proof}
Note that if  $\mathcal{M}_4^d$ is a block permutation with the unique parameters $((2, \ldots, 2), \lambda_M, 0)$, then every matrix equivalent to $\mathcal{M}_4^d$ is a block permutation for some unique parameters $( \mathcal{E}', \lambda', s')$.

We prove  the statement by induction on $d$.  In the base case, the  support of the matrix $\mathcal{M}_4^3$ is
$$
\begin{array}{ccccccccccccccccccc}
\bullet & \circ & \circ & \circ & \qquad & \circ & \circ & \circ & \bullet & \qquad & \circ & \circ & \bullet & \circ & \qquad & \circ & \bullet & \circ & \circ \\
\circ & \circ & \circ & \bullet & \qquad & \circ & \circ & \bullet & \circ & \qquad & \circ & \bullet & \circ & \circ & \qquad & \bullet & \circ & \circ & \circ \\
\circ & \circ & \bullet & \circ & \qquad & \circ & \bullet & \circ & \circ & \qquad & \bullet & \circ & \circ & \circ & \qquad & \circ & \circ & \circ & \bullet \\
\circ & \bullet & \circ & \circ & \qquad & \bullet & \circ & \circ & \circ & \qquad & \circ & \circ & \circ & \bullet & \qquad & \circ & \circ & \bullet & \circ \\
\end{array}
$$
It can be verified directly that $\mathcal{M}_4^3$ is a block permutation with parameters $((2,2,2), \lambda_M, 0)$ and these parameters are unique. 

From the definition of $\mathcal{M}_4^d$ and the block permutations, it follows that parameters $((2, \ldots, 2), \lambda_M, 0)$ do define the  permutation $\mathcal{M}_4^d$.  Since all parallel hyperplanes of a fixed direction  are equivalent to $\mathcal{M}_4^{d-1}$,  from the inductive assumption we get that the obtained parameters are unique.
\end{proof}

Let us show that a certain small modification of the permutation $\mathcal{M}_4^d$ results in a matrix with a positive permanent.

\begin{lemma} \label{addtofilled}
Let  $A$ be a $d$-dimensional  block permutation equivalent to $\mathcal{M}_4^d$ and $C$ be some filled  subcube of the matrix $A$. Then a $(0,1)$-matrix $B$ obtained from $A$ by adding one new entry to the support of the subcube $C$  has a positive permanent. 
\end{lemma}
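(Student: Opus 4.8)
The plan is to reduce to the case $A=\mathcal{M}_4^d$ by an equivalence, and then simply write down one diagonal of $B$ explicitly.

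First I would use that the permanent is an equivalence invariant and that, by Proposition~\ref{Md4blockform}, the block structure of $\mathcal{M}_4^d$ is the unique one, with parameters $((2,\dots,2),\lambda_M,0)$. Applying an equivalence $\phi$ with $\phi(A)=\mathcal{M}_4^d$, the image of the block structure of $A$ must be this standard one; since filled subcubes are exactly the subcubes of weight $2^{d-1}$ and weights of subcubes are preserved by $\phi$, the filled subcube $C$ is mapped to a filled subcube $C_y$ of the standard partition, so $w(y)$ is even. Hence I may assume $A=\mathcal{M}_4^d$, that $C=C_y$ with $y$ of even weight, and that the added index $\beta\in C_y$ satisfies $\beta\notin\supp(\mathcal{M}_4^d)$. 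The number of odd coordinates of $\beta$ equals $w(y)$, which is even, so $\beta_1+\cdots+\beta_d$ is even; being not divisible by $4$, it is $\equiv 2\pmod 4$. If $d$ is even there is nothing to prove, since $\per\mathcal{M}_4^d>0$ by~\cite{sun} and passing from $A$ to $B$ only enlarges the support; so from now on $d$ is odd.

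Next, since $\per\mathcal{M}_4^d=0$, every diagonal contained in $\supp(B)$ must pass through $\beta$, so it suffices to produce indices $\gamma^2,\gamma^3,\gamma^4\in\supp(\mathcal{M}_4^d)$ for which $\{\beta,\gamma^2,\gamma^3,\gamma^4\}$ is a diagonal; such a diagonal lies in $\supp(B)$ and gives $\per B\ge 1$. I would look for $\gamma^k$ of the form $\gamma^k_i=\beta_i+\delta^k_i\pmod 4$, where for each coordinate $i$ the triple $(\delta^2_i,\delta^3_i,\delta^4_i)$ is a permutation of $(1,2,3)$. Then $\{\beta_i,\gamma^2_i,\gamma^3_i,\gamma^4_i\}=\{0,1,2,3\}$ for every $i$, so the four indices are automatically pairwise distinct in all coordinates, i.e.\ they form a diagonal; and $\gamma^k_1+\cdots+\gamma^k_d\equiv 2+(\delta^k_1+\cdots+\delta^k_d)\pmod 4$, so $\gamma^k\in\supp(\mathcal{M}_4^d)$ exactly when $\delta^k_1+\cdots+\delta^k_d\equiv 2\pmod 4$.

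So the whole problem reduces to building a $3\times d$ array $(\delta^k_i)$ whose columns are permutations of $(1,2,3)$ and whose three row-sums are all $\equiv 2\pmod 4$; this is where the parity of $d$ is used, since the array entries sum to $6d$, and the requirement $6d\equiv 3\cdot 2\equiv 2\pmod 4$ forces $d$ odd. For the first three columns I would take any $3\times 3$ latin square on $\{1,2,3\}$ — each of its rows sums to $6\equiv 2$; the remaining $d-3$ columns form an even number of consecutive pairs, and into each pair I put the columns $(1,2,3)$ and $(3,2,1)$ (read top to bottom), which are permutations and contribute $(4,4,4)\equiv(0,0,0)$ to the row-sums. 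Then all three row-sums are $\equiv 2\pmod 4$, which finishes the construction and yields $\per B>0$. The only genuine subtlety, and the reason a naive guess fails, is that a symmetric completion forces the first coordinates of $\gamma^2,\gamma^3,\gamma^4$ to coincide; breaking this symmetry with a latin-square block rather than a constant pattern is exactly what makes the construction go through, after which the remaining verification is a short congruence check.
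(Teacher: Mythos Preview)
Your proof is correct and takes a genuinely different route from the paper's. Both arguments share the same opening: reduce via Proposition~\ref{Md4blockform} to $A=\mathcal{M}_4^d$, observe that the added index $\beta$ has $\sum\beta_i\equiv 2\pmod 4$, and dispose of even $d$. From there the paper proceeds by induction on odd $d$: it lists explicit completions for all residue types of $\beta$ in the base cases $d=3$ and $d=5$, and for $d\ge 7$ splits off $k\in\{2,4\}$ coordinates whose sum is $\equiv 0\pmod 4$ to reduce the dimension.

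Your approach is more uniform: you parametrize the three missing points of the diagonal as $\gamma^k=\beta+\delta^k\pmod 4$ and reduce the problem to a purely combinatorial one on the offset array $(\delta^k_i)$, namely that each column is a permutation of $(1,2,3)$ and each row sums to $2\pmod 4$. The construction (a $3\times 3$ latin square followed by $(d-3)/2$ cancelling column pairs) handles all odd $d\ge 3$ at once, with no case analysis on $\beta$ and no induction. This is cleaner than the paper's argument; the trade-off is that the paper's inductive scheme would generalize more readily to statements where a single offset pattern does not suffice. Your remark about the necessity of breaking symmetry via the latin square block is exactly the point: repeating a single column would collapse the $\gamma^k$ in some coordinate.
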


\begin{proof}
By Proposition~\ref{Md4blockform}, it is sufficient to prove the statement for the matrix $\mathcal{M}_4^d$. Moreover, by Theorem~\ref{PPTVprop16}, we may assume that $d$ is odd. 

By the definition and Proposition~\ref{Md4blockform}, every filled subcube $C$ of the matrix $\mathcal{M}_4^d$ is composed by indices $\alpha$ such that $\alpha_1 +  \cdots + \alpha_d \equiv 0$ or $2 \mod 4$. Since for every $\alpha$ from the  support of  $ \mathcal{M}_4^d$ it holds $\alpha_1 + \cdots + \alpha_d \equiv 0 \mod 4$,  for each new entry $\alpha$ in the support we have that  $\alpha_1 + \cdots + \alpha_d \equiv 2 \mod 4$. 

We prove that there is a positive diagonal in $\mathcal{M}_4^d$ containing any index $\alpha$ with the last property by induction on $d$.  The base cases  are  $d = 3$ and $d = 5$. 

Up to permutations of positions, there are $5$ possible choices of $\alpha$ in a $3$-dimensional case:
$$ (0, 0, 2), ~~ (0, 1, 1), ~~ (0, 3, 3), ~~ (1, 2, 3), ~~ (2, 2, 2).$$
We can choose the remaining three elements from the support $\mathcal{M}_4^3$ giving a positive diagonal, for example, as follows:
$$
\begin{array}{ccccc}
(0, 0, 2) &  (0, 1, 1) & (0, 3, 3) & (1, 2, 3) &  (2, 2, 2) \\
\hline
(1, 2, 1) &  (1, 0, 3) & (1, 1, 2) & (0, 3, 1) &  (0, 1, 3) \\
(2, 3, 3) &  (2, 2, 0) & (2, 2, 0) & (2, 0, 2) &  (1, 3, 0) \\
(3, 1, 0) &  (3, 3, 2) & (3, 0, 1) & (3, 1, 0) &  (3, 0, 1) \\
\end{array}
$$

For $d = 5$, there are $2$ possible indices $\alpha$ (up to permutations of positions) that cannot be reduced to the $3$-dimensional case by deleting a pair of components that sum to $0$ modulo $4$:
$$ (1, 1, 1, 1, 2), ~~ (2, 3, 3, 3, 3).$$

The remaining three elements from the support $\mathcal{M}_4^3$ completing them to a positive diagonal, for example, are the following:
$$
\begin{array}{ccccc}
(1, 1, 1, 1, 2) &  (2, 3, 3, 3, 3) \\
\hline
(0, 0, 2, 2, 0) &  (0, 0, 1, 1, 2) \\
(2, 2, 3, 0, 1) &  (1, 1, 0, 2, 0) \\
(3, 3, 0, 3, 3) &  (3, 2, 2, 0, 1) \\
\end{array}
$$

It can be verified directly that  for every index  $\alpha = (\alpha_1, \ldots, \alpha_d)$ with $d \geq 7$ and such that $\alpha_1 + \cdots + \alpha_d \equiv 2 \mod 4$  there are $k$ components $i_1, \ldots, i_k$, $k \in \{ 2, 4\}$, for which $\alpha_{i_1} + \cdots + \alpha_{i_k}\equiv 0 \mod 4$. Without loss of generality, we assume that the first $k$ components of $\alpha$ satisfy this equality. 

By the inductive assumption, for $(\alpha_{k+1}, \ldots, \alpha_d)$   there are indices  $(\alpha^1_{k+1}, \ldots, \alpha^1_d)$,  $(\alpha^2_{k+1}, \ldots, \alpha^2_d)$,  and  $(\alpha^3_{k+1}, \ldots, \alpha^3_d)$ in $\mathcal{M}_4^{d-k}$ completing it to a positive diagonal. Also by Theorem~\ref{PPTVprop16}, for the index $(\alpha_{1}, \ldots, \alpha_k)$ in $\mathcal{M}_4^{k}$  we can find indices  $(\alpha^1_{1}, \ldots, \alpha^1_k)$,  $(\alpha^2_{1}, \ldots, \alpha^2_k)$,  and  $(\alpha^3_{1}, \ldots, \alpha^3_k)$ completing it to a positive diagonal. So the indices  $(\alpha^1_{1}, \ldots, \alpha^1_d)$,  $(\alpha^2_{1}, \ldots, \alpha^2_d)$,  and  $(\alpha^3_{1}, \ldots, \alpha^3_d)$  complete $\alpha = (\alpha_1, \ldots, \alpha_d)$ to a positive diagonal.
\end{proof}

In view of this lemma, if  there are  two matrices equivalent to $\mathcal{M}_4^d$  such that the support of one matrix adds some new entries to a filled subcube of another matrix, then the convex sum of these  matrices has a positive permanent. So we take a closer look at the intersections of filled subcubes.

\begin{proposition} \label{filledintersec}
Let  $A$ be a $d$-dimensional  block permutation matrix  equivalent to $\mathcal{M}_4^d$ with parameters  $(\mathcal{E}, \lambda, s)$, $\mathcal{E} = (\varepsilon_1, \ldots, \varepsilon_d)$, such that exactly $k$ elements of $\mathcal{E}$ are equal to $2$.
\begin{enumerate}
\item If $0 \leq k < d$, then  every filled subcube of $\mathcal{M}_4^d$  intersects  $2^{d - k -1}$ filled subcubes of $A$ by $k$-dimensional submatrices of order $2$.
\item If $k = d$ and $s = 0$, then filled subcubes of $A$ and $\mathcal{M}_4^d$ coincide, and  if $k = d$ and $s = 1$, then the  filled subcubes of $A$ and $\mathcal{M}_4^d$ do not intersect. 
\end{enumerate}
\end{proposition}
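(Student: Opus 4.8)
The plan is to argue entirely at the level of index sets, reducing everything to a coordinate-by-coordinate comparison of the partition $\mathcal{E}$ with the partition $(2,\dots,2)$ of $\mathcal{M}_4^d$. First I would use Proposition~\ref{Md4blockform}: relative to the partition $(2,\dots,2)$, the filled subcubes of $\mathcal{M}_4^d$ are exactly the subcubes $C_y$, $y \in \{0,1\}^d$ with $w(y)$ even, where in coordinate $i$ the block $C_y$ restricts $\alpha_i$ to the pair $p_2^{-1}(y_i)$ of $\mathcal{P}_2$; and the filled subcubes of $A$ are the subcubes $C'_z$, $z \in \{0,1\}^d$ with $w(z) \equiv s \pmod 2$, where in coordinate $i$ the block $C'_z$ restricts $\alpha_i$ to the pair $p_{\varepsilon_i}^{-1}(z_i)$ of $\mathcal{P}_{\varepsilon_i}$.

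The combinatorial core is the elementary fact that any two distinct partitions among $\mathcal{P}_1,\mathcal{P}_2,\mathcal{P}_3$ of $\{0,1,2,3\}$ into two pairs are \emph{orthogonal}: every pair of one meets every pair of the other in exactly one point. Applying this in each coordinate of the intersection $C_y \cap C'_z$: if $\varepsilon_i = 2$ the two blocks use the same partition $\mathcal{P}_2$, so they agree in coordinate $i$ when $z_i = y_i$ and are disjoint when $z_i \ne y_i$; if $\varepsilon_i \ne 2$ the two pairs meet in a single value whatever $y_i,z_i$ are. Hence $C_y \cap C'_z \ne \emptyset$ iff $z_i = y_i$ for each of the $k$ coordinates $i$ with $\varepsilon_i = 2$, and when this holds the intersection has order $2$ in precisely those $k$ coordinates and order $1$ in the other $d-k$, that is, it is a $k$-dimensional submatrix of order $2$.

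To finish part~(1), with $0 \le k < d$: the coordinates $z_i$ with $\varepsilon_i = 2$ are forced to equal $y_i$, while the remaining $d - k \ge 1$ coordinates of $z$ are unconstrained; splitting $w(z)$ into the fixed partial sum over the forced coordinates plus the free part, exactly $2^{d-k-1}$ of the $2^{d-k}$ choices of the free part give $w(z) \equiv s \pmod 2$, so a fixed filled $C_y$ meets exactly $2^{d-k-1}$ filled subcubes of $A$, each in a $k$-dimensional order-$2$ submatrix. For part~(2), $k = d$ forces $\mathcal{E} = (2,\dots,2)$, so the subcubes of $A$ and of $\mathcal{M}_4^d$ are the same family $\{C_z : z \in \{0,1\}^d\}$; the filled ones of $\mathcal{M}_4^d$ are those with $w(z)$ even and those of $A$ are those with $w(z) \equiv s \pmod 2$, which coincide when $s = 0$ and are disjoint when $s = 1$.

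There is no serious obstacle here: the only points needing care are keeping the two partition schemes straight in each coordinate and verifying the orthogonality of distinct pair-partitions of a $4$-element set, after which both claims follow from a one-line parity count.
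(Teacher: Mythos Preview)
Your argument is correct and follows essentially the same route as the paper's proof: both reduce to a coordinate-by-coordinate comparison of the two partition schemes, using (implicitly in the paper, explicitly in your write-up) that distinct pair-partitions of $\{0,1,2,3\}$ are orthogonal, and then finish with the same parity count over the $d-k$ free coordinates. Your formulation is slightly more abstract --- the paper instead fixes a WLOG ordering of $\mathcal{E}$ and computes the intersection with the specific subcube $C_{0\cdots 0}$ --- but the content is identical.
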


\begin{proof}
Recall that, by Proposition~\ref{Md4blockform}, the matrix $\mathcal{M}_4^d$  is a block permutation with parameters $((2, \ldots, 2), \lambda_M, 0)$. By the definition and Proposition~\ref{Md4blockform},  filled subcubes of $\mathcal{M}_4^d$ are matrices 
$$C_y = \{ (\alpha_1, \ldots, \alpha_d) : \alpha_i \in \{ 0,2\} \mbox{ if } y_i = 0, \alpha_i \in \{ 1,3\} \mbox{ if } y_i = 1 \}, ~ w(y) \mbox{ is even}.$$

Also the definition of the block permutation implies that  if $\mathcal{E} = (2, \ldots, 2)$ and $s = 0$, then the filled subcubes of $A$ are the same as subcubes of $\mathcal{M}_4^d$, and if $\mathcal{E} = (2, \ldots, 2)$ and $s = 1$, then the filled subcubes of $A$ are 
$$C'_y = \{ (\alpha_1, \ldots, \alpha_d) : \alpha_i \in \{ 0,2\} \mbox{ if } y_i = 0, \alpha_i \in \{ 1,3\} \mbox{ if } y_i = 1 \}, ~ w(y) \mbox{ is odd},$$
and so they do not intersect.

Suppose now that exactly $k$ elements of $\mathcal{E}$ are equal to $2$. Without loss of generality, we may assume that $\varepsilon_i = 2$ for $i \in \{ 1, \ldots, k\}$, $\varepsilon_i = 1$ for $i \in \{ k+ 1, \ldots, \ell\}$, and $\varepsilon_i = 3$ for $i \in \{ \ell + 1, \ldots, d\}$.  We also suppose that $s = 0$ (for $s = 1$ the reasoning is similar). 

Then the filled blocks $C'_y$ of $A$, where $y \in Q_0^d$,  are  composed of indices   $(\alpha_1, \ldots, \alpha_d)$ such that 
\begin{itemize}
\item for $i \in \{ 1, \ldots, k\} $  components $\alpha_i \in \{ 0,2\}$ if  $y_i = 0$, and  $\alpha_i \in \{ 1,3\}$  if $y_i = 1 $;
\item for $i \in \{ k+ 1, \ldots, \ell\} $  components $\alpha_i \in \{ 0,1\}$ if  $y_i = 0$, and  $\alpha_i \in \{ 2,3\}$  if $y_i = 1 $;
\item for $i \in \{ \ell+ 1, \ldots, d\} $  components $\alpha_i \in \{ 0,3\}$ if  $y_i = 0$, and  $\alpha_i \in \{ 1,2\}$  if $y_i = 1 $.
\end{itemize}

Without loss of generality, we study the maximal intersection of filled subcubes of $A$ with the filled subcube $C_{0 \cdots 0}$ of  $\mathcal{M}_4^d$. In our assumptions, it is attained  on filled  subcubes $C'_y$  of $A$ with $y_i =0$ for all $i \in \{1, \ldots, k \}$ and consists of indices   $(\alpha_1, \ldots, \alpha_d)$ such that $\alpha_i \in \{ 0,2\}$ if  $i \in \{ 1, \ldots, k\} $,   $\alpha_i  = 0 $   if $i \in \{ k+ 1, \ldots, d\} $ and $y_i = 0$ and   $\alpha_i  = 2$   if $i \in \{ k+ 1, \ldots, d\} $ and $y_i = 1$. It remains to note that this set is exactly the $k$-dimensional submatrix of order $2$, and the number of such filled subcubes $C'_y$ is $2^{d - k -1}$. 
\end{proof}

Given a matrix $A$ obtained as a convex sum of matrices $B$ and $B'$ equivalent to $\mathcal{M}_4^d$, let the \textit{tesselation index} of $A$ be  the dimension of the maximal intersection  of filled subcubes of $B$ and $B'$. If all filled subcubes of $B$ and $B'$ do not intersect, we will say that the tesselation index is $- \infty$.  Due to Propositions~\ref{Md4blockform} and~\ref{filledintersec}, the tesselation index is well defined.  

\begin{proposition} \label{nonintersect}
Let a $d$-dimensional polystochastic $A$ be  a convex sum of matrices $B$ and $B'$ equivalent to $\mathcal{M}_4^d$. If the tessellation index of $A$ is equal to $- \infty$, then the permanent of $A$ is positive. 
\end{proposition}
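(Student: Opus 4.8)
The plan is to use the structural results already established to put the pair $(B,B')$ into a normal form, and then to exhibit an explicit positive diagonal of $A$.

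First, the case of even $d$ is immediate: $\mathcal M_4^d$ has a positive permanent (Sun's theorem, recalled in the introduction), hence so does $B$ since equivalence preserves the permanent, and because $\supp(B)\subseteq\supp(A)$ with $a_\alpha\ge\lambda b_\alpha>0$ on $\supp(B)$, a positive diagonal of $B$ is a positive diagonal of $A$. So assume $d$ is odd, $d\ge 3$. Applying an equivalence we may take $B=\mathcal M_4^d$; this preserves the permanent and, by Propositions~\ref{Md4blockform} and~\ref{filledintersec}, the tessellation index. Then $B'$ is a block permutation with parameters $(\mathcal E',\lambda',s')$, and tessellation index $-\infty$ together with Proposition~\ref{filledintersec} forces $\mathcal E'=(2,\dots,2)$ and $s'=1$: if fewer than $d$ entries of $\mathcal E'$ equal $2$ the filled subcubes of $B'$ and $\mathcal M_4^d$ meet, while for $\mathcal E'=(2,\dots,2)$ the value $s'=0$ makes them coincide. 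Writing indices through $\mathbb Z_4$ as in Proposition~\ref{Md4blockform}, $\supp(B)=\{\alpha:\alpha_1+\cdots+\alpha_d\equiv 0\pmod 4\}$; and since the value permutations of $\{0,1,2,3\}$ preserving the partition $\mathcal P_2=02|13$ are precisely the affine maps $x\mapsto\varepsilon x+c$ of $\mathbb Z_4$, a block permutation with parameters $((2,\dots,2),\cdot,1)$ equivalent to $\mathcal M_4^d$ has $\supp(B')=\{\alpha:\sum_j\varepsilon_j\alpha_j\equiv b\pmod 4\}$ for some $\varepsilon\in\{1,3\}^d$ and odd $b$. Permuting coordinates and applying equivalences fixing $\mathcal M_4^d$ (the global sign change $\alpha\mapsto-\alpha$ and translations $\alpha\mapsto\alpha+c$ with $\sum_j c_j\equiv 0$), we may further assume $\varepsilon_j=3$ for $j\le r$ and $\varepsilon_j=1$ for $j>r$, with $0\le r\le d-1$, and $b=1$; thus $\supp(B')=\{\alpha:-\sum_{j\le r}\alpha_j+\sum_{j>r}\alpha_j\equiv 1\pmod 4\}$.

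Since every nonzero entry of $A$ equals $\lambda$ or $1-\lambda$, $\per A>0$ iff $\supp(A)=\supp(B)\cup\supp(B')$ contains a diagonal, so it suffices to exhibit one. As $\supp(B)$ lies in the even-weight $\mathcal P_2$-subcubes and $\supp(B')$ in the odd-weight ones, the two supports are disjoint, and a parity count shows that any diagonal in $\supp(A)$ has an even number of points in odd subcubes; this number is not $0$ or $4$, since $\per B=\per B'=0$ for odd $d$, so it is $2$. Hence I look for $\alpha,\alpha'\in\supp(B)$ and $\beta,\beta'\in\supp(B')$ forming a diagonal. Take $\alpha=(0,\dots,0)$ and let $\alpha'$ have entry $1$ in two coordinates $p\ne q$ and entry $2$ in the remaining $d-2$ coordinates; then $\sum_j\alpha'_j=2d-2\equiv 0\pmod 4$ because $d$ is odd, so $\alpha,\alpha'\in\supp(B)$. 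In every coordinate $\{\alpha_j,\alpha'_j\}$ equals $\{0,1\}$ for $j\in\{p,q\}$ and $\{0,2\}$ otherwise, forcing $\{\beta_j,\beta'_j\}$ to be $\{2,3\}$ or $\{1,3\}$ respectively; defining $\beta'$ as the coordinatewise complement of $\beta$ within these pairs automatically makes $\{\alpha,\alpha',\beta,\beta'\}$ a diagonal. It remains to choose $\beta$: pick $p,q$ in whichever of $\{1,\dots,r\}$ and $\{r+1,\dots,d\}$ has at least two elements (one of them does, as $0\le r\le d-1$ and $d\ge 3$), set $\beta_p=\beta_q=2$, and leave $\beta_j\in\{1,3\}$ free in the other $d-2$ coordinates. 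Each free coordinate contributes an independent unit to $-\sum_{j\le r}\beta_j+\sum_{j>r}\beta_j\pmod 4$; since $d-2$ is odd, this residue can be made $\equiv 1$, i.e. $\beta\in\supp(B')$. A short computation — using that $p,q$ lie in the same $\varepsilon$-class and $d$ is odd — then shows the corresponding residue for $\beta'$ equals $2-1=1$ as well, so $\beta'\in\supp(B')$. This yields a diagonal contained in $\supp(A)$, whence $\per A\ge\lambda^2(1-\lambda)^2>0$.

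The routine but somewhat delicate ingredients are the normal form for $\supp(B')$ — identifying the $\mathcal P_2$-preserving value permutations with $\mathrm{AGL}(1,\mathbb Z_4)$ and reducing by the automorphisms of $\mathcal M_4^d$ — and the parity bookkeeping in the last step, in particular checking that choosing $p,q$ in the same $\varepsilon$-class makes $\beta$ and $\beta'$ land in $\supp(B')$ simultaneously. I expect the normal form to be the main obstacle: it is the point where the description of the family of matrices equivalent to $\mathcal M_4^d$ genuinely enters, and once it is in place the construction of the diagonal is a direct verification.
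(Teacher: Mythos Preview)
Your argument is correct and takes a genuinely different route from the paper's proof. The paper, after the same reduction to odd $d$ and the observation that $B,B'$ share the partition $\mathcal E$ with opposite values of $s$, does \emph{not} pin down $\supp(B')$ as a linear coset. Instead it notes that every subcube $C_y$ is filled by exactly one of $B,B'$, so every line of $A$ has exactly two nonzero entries; it then takes four pairwise diagonally located $2$-dimensional planes $\Gamma_0,\ldots,\Gamma_3$, argues that the resulting $3$-dimensional stochastic matrix $(\Gamma_0,\Gamma_1,\Gamma_2,\Gamma_3)$ has the support of a $3$-dimensional double permutation of order~$4$, and appeals to the (computer) enumeration of all such double permutations to conclude that its permanent is positive, finishing via Proposition~\ref{perreduction}. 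Your approach instead derives the explicit normal form $\supp(B')=\{\alpha:\sum_j\varepsilon_j\alpha_j\equiv b\pmod 4\}$ by identifying the $\mathcal P_2$-stabilizer with $\mathrm{AGL}(1,\mathbb Z_4)$ and exploiting the uniqueness in Proposition~\ref{Md4blockform}, and then hand-builds a positive diagonal. This is longer to set up but is entirely self-contained: it avoids the external enumeration the paper invokes and yields an explicit witness. The paper's argument is shorter and more structural, but leans on a computer check for the base case. One small remark: your parity paragraph (that any diagonal in $\supp(A)$ meets the odd subcubes in exactly two points) is correct and nicely motivates the shape of the construction, but is not logically needed once you exhibit the diagonal.
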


\begin{proof}
If $d$ is even, then the permanent of $A$ is positive because, by Theorem~\ref{PPTVprop16}, the permanents of matrices $B$ and $B'$ are positive.

Let us prove the statement for odd $d$. By Proposition~\ref{filledintersec}, we may assume that matrices $B$ and $B'$ are block permutations with parameters $(\mathcal{E}, \lambda, 0)$ and  $(\mathcal{E}, \lambda', 1)$,  respectively.  Note that the permanent of $A$ does not depend on the partition $\mathcal{E}$ and is determined by only the Boolean functions $\lambda$ and $\lambda'$.

By  the definition of the tesselation index, filled subcubes  of  matrices $B$ and $B'$ do not intersect.   So we may assume that the matrix $A$  is decomposed into subcubes $C_y$  filled by $d$-dimensional permutations of order $2$, where the choice of a permutation is defined by the Boolean  function $\Lambda$, $\Lambda(y) = \lambda(y)$ if $w(y)$ is even and $\Lambda(y) = \lambda'(y)$ if $w(y)$ is odd.  It means that every line of $A$ contains exactly two nonzero entries, and so every $2$-dimensional plane of $A$ is a doubly stochastic matrix with exactly two nonzero entries in each line.   Since $d$ is odd, for every   diagonally located indices  $\alpha$ and  $\alpha'$  in a subcube $C_y$, exactly one of them belongs to the support of $A$.  

Consider the pairwise diagonally located $2$-dimensional planes  $\Gamma_ 0 = A_{3, \ldots, d}^{0, \ldots, 0 }$,  $\Gamma_1 =  A_{3, \ldots, d}^{1, \ldots, 1 }$, $\Gamma_2 = A_{3, \ldots, d}^{2, \ldots, 2 }$, and  $\Gamma_3 = A_{3, \ldots, d}^{3, \ldots, 3 }$  of $A$.  There exists a pair of indices  $i,i'$, $i \neq i'$, such that $\supp(\Gamma_i) = I_4^2 \setminus\supp(\Gamma_{i'})  $ because the same holds for subcubes $C_y$ intersecting these $2$-dimensional planes. Moreover, by the same reason,  for $j$ and $j'$, $\{i,j,i',j'\} = \{ 0,1,2,3\}$, we have  $\supp(\Gamma_j) = I_4^2 \setminus\supp(\Gamma_{j'})$. 

Therefore, in the $3$-dimensional stochastic matrix $R = (\Gamma_0, \Gamma_1, \Gamma_2, \Gamma_3)$ every line contains exactly $2$ nonzero entries, and the support of $R$ is a support of some $3$-dimensional double permutation of order $4$. The exhaustive enumeration of all $3$-dimensional double permutations of order $4$ (see, for example,~\cite{Shi}) gives that all such matrices have positive permanent.  By Proposition~\ref{perreduction},  the permanent of the matrix $A$ is also positive.
\end{proof}

Note that a matrix from class $\mathcal{L}_4^d$ has a tesselation  index $d-1$ and it is a convex sum of two matrices such  that each matrix adds no new entries  to the support of filled subcubes of another because otherwise, by Lemma~\ref{addtofilled},  $\mathcal{L}_4^d$  should have a positive permanent.  We show that  matrices from $\mathcal{L}_4^d$  are unique matrices with such property.

\begin{proposition} \label{Ld4unique}
Let $A$ be  a convex sum of matrices $B$ and $B'$ equivalent to $\mathcal{M}_4^d$. If the tesselation index of $A$ is equal to $d-1$ and $A$ is such that no one new entry is added to the support of filled subcubes of $B$ and $B'$, then  the matrix $A$ is equivalent to a matrix from  $\mathcal{L}_4^d$. 
\end{proposition}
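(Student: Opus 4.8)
The plan is to run everything through the block-permutation calculus of Section~\ref{convexsumsec}. First I would apply an equivalence so that $B=\mathcal{M}_4^d$ becomes its canonical block permutation with parameters $((2,\dots,2),\lambda_M,0)$ (Proposition~\ref{Md4blockform}), and write $B'$ as a block permutation with parameters $(\mathcal{E}',\lambda',s')$ in which exactly $k'$ entries of $\mathcal{E}'$ equal $2$. By Proposition~\ref{filledintersec}, the maximal intersection of a filled subcube of $\mathcal{M}_4^d$ with a filled subcube of $B'$ has dimension $k'$ if $k'<d$, and is either $d$-dimensional (if $s'=0$) or empty (if $s'=1$) when $k'=d$. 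Since the tesselation index of $A$ equals $d-1$, this forces $k'=d-1$, so after a permutation of positions we may assume $\mathcal{E}'=(2,\dots,2,\varepsilon')$ with $\varepsilon'\in\{1,3\}$. In this situation each filled subcube $C_z$ of $\mathcal{M}_4^d$ ($z\in Q_0^d$) meets exactly one filled subcube $C'_y$ of $B'$, the one with $y_i=z_i$ for $i<d$ and $y_d=z_d\oplus s'$, and $C_z\cap C'_y$ is the $(d-1)$-dimensional subcube obtained by freezing $\alpha_d$ to the unique value $v$ with $p_2(v)=z_d$ and $p_{\varepsilon'}(v)=y_d$.

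Next I would convert the two ``no new entry'' hypotheses into a formula for $\lambda'$. In the coordinates $t_i=\mu_2(\alpha_i)$, the order-$2$ permutation of $\mathcal{M}_4^d$ inside $C_z$ is $\{t: t_1\oplus\cdots\oplus t_d=\lambda_M(z)\}$, and the order-$2$ permutation of $B'$ inside $C'_y$ is $\{t': t'_1\oplus\cdots\oplus t'_d=\lambda'(y)\}$. Restricting both to $C_z\cap C'_y$ — where $t_i=t'_i$ for $i<d$ while $t_d$ and $t'_d$ take the fixed values $\mu_2(v)$ and $\mu_{\varepsilon'}(v)$ — the inclusion $\supp(B')\cap C_z\subseteq\supp(B)\cap C_z$ forces an equality of two $(d-1)$-dimensional order-$2$ permutations sitting in the same subcube, i.e.\ the scalar relation $\lambda'(y)\oplus\mu_{\varepsilon'}(v)=\lambda_M(z)\oplus\mu_2(v)$; the symmetric hypothesis yields the same relation. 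Evaluating $\mu_2(v),\mu_{\varepsilon'}(v)$ in terms of $z_d,y_d$ and using $y_d=z_d\oplus s'$, this reduces to $\lambda'(y)=\lambda_M(z(y))\oplus c$, where $c$ depends only on $s'$ and $z(y)=(y_1,\dots,y_{d-1},y_d\oplus s')$. Hence, once $\varepsilon'\in\{1,3\}$ and $s'\in\{0,1\}$ are fixed, $B'$ is uniquely determined, and unwinding the definitions shows that $B'$ is exactly the matrix obtained from $\mathcal{M}_4^d$ by relabeling the last coordinate with one specific value permutation $f$ (the one sending $\mathcal{P}_2$ to $\mathcal{P}_{\varepsilon'}$ and realizing the $\mu$- and $p$-shifts above).

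It remains to recognize $A=\lambda\mathcal{M}_4^d+(1-\lambda)B'$ as a member of $\mathcal{L}_4^d$. Since $\varepsilon'\neq 2$, the relabeling $f$ does not preserve the partition $\mathcal{P}_2$; as the stabilizer of $\mathcal{P}_2$ in $S_4$ is exactly the $8$-element affine group of $\mathbb{Z}_4$, this means $f$ is not affine. That affine group has only two double cosets in $S_4$ — itself and the set of all sixteen non-affine permutations — so $f=a\,\pi\,b$ with $\pi=(01)$ and $a,b$ affine. Consequently an equivalence fixing $\mathcal{M}_4^d$ (relabel the last coordinate affinely, compensate in the first $d-1$ coordinates, permute hyperplanes) carries $B'$ to the matrix $M$ with $\supp(M)=\{\alpha:\alpha_1+\cdots+\alpha_{d-1}+\pi(\alpha_d)\equiv 0\bmod 4\}$; applying this equivalence to $A$, and using that $\mathcal{L}_4^d$ is stable under exchanging its two summands (relabel the last coordinate by $\pi$), we conclude that $A$ is equivalent to $\lambda\mathcal{M}_4^d+(1-\lambda)M\in\mathcal{L}_4^d$. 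I expect the main obstacle to be the second paragraph: carrying the bookkeeping of the partitions $\mathcal{P}_i$, their parity functions $\mu_i$, the subcube-intersection combinatorics and the parameters $(s',\lambda')$ cleanly enough that the two hypotheses genuinely collapse $B'$ to a single value permutation of one coordinate. Once that is in hand, the last step is a routine finite computation about the affine group of $\mathbb{Z}_4$ together with a few explicit equivalences.
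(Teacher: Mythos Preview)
Your proposal is correct and follows essentially the same approach as the paper: normalize via Proposition~\ref{filledintersec} so that $B=\mathcal{M}_4^d$ and $\mathcal{E}'$ has exactly one entry different from $2$, then use the one-to-one correspondence between filled subcubes of $B$ and $B'$ together with the no-new-entry hypothesis to determine $\lambda'$ uniquely. The paper's own proof is considerably terser at the final step---it simply asserts that once $\lambda'$ is forced the matrix $A$ lies in $\mathcal{L}_4^d$---whereas you supply the explicit double-coset computation in $S_4$ (affine group versus its complement) to justify that the resulting $B'$ is carried to the specific $M$ in the definition of $\mathcal{L}_4^d$ by an equivalence fixing $\mathcal{M}_4^d$.
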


\begin{proof}
By Proposition~\ref{filledintersec}, we may assume that $B = \mathcal{M}_4^d$ and $B'$ is a block permutation with parameters $(\mathcal{E}', \lambda', s)$, where exactly $d-1$ elements of $\mathcal{E}'$ are equal to $2$. Without loss of generality, let $\mathcal{E}' = (2, \ldots, 2, 3)$. 

Proposition~\ref{filledintersec} also implies that  every filled subcube of $\mathcal{M}_4^d$ intersects exactly one  filled subcube of $B'$ by a $(d-1)$-dimensional submatrix of order $2$  and vice versa. So there is a bijection $\varphi$ between filled subcubes $C_y$ and $C'_y$ of $\mathcal{M}_4^d$ and $B'$.   Since  there are no new entries in the supports of filled subcubes of $\mathcal{M}_4^d$ and $B'$,  the value $\lambda_M(y)$ for a filled subcube $C_y$ uniquely defines by the value $\lambda' (\varphi (y))$ in the filled subcube $C'_{\varphi(y)}$. Thus the function $\lambda'$ is unique for this partition $\mathcal{E}'$, and the matrix $A$ coincides with some matrix $\mathcal{L}_4^d$.
\end{proof}

From the definition of the class $\mathcal{L}_4^d$, we see that for every $A \in \mathcal{L}_4^d$, each hyperplane of direction $j \neq d$ is equivalent to  some matrix from $\mathcal{L}_4^{d-1}$, and two of four hyperplanes of $A$ of direction $d$  are equivalent  $\mathcal{M}_4^{d-1}$, while two other hyperplanes are double permutations.  Moreover, if $A$ is a matrix equivalent to some matrix from $\mathcal{L}_4^d$, then this property holds with respect to some direction $j$ of hyperplanes. In this case, we will say that $j$ is the \textit{special direction} for the matrix $A$.

\begin{proposition} \label{nonewinfilled}
Let $A$ be a $d$-dimensional polystochastic matrix of order $4$ obtained as a convex combination of  at  matrices $M_1, \ldots, M_k$, $k \geq 2$,  equivalent to $\mathcal{M}_4^d$ and such that no one new entry is added to the support of filled subcubes of $M_i$.  Then $k = 2$ and    either the filled subcubes of $M_1$ and $M_2$ do not intersect  or the matrix $A$ is equivalent to some matrix from $\mathcal{L}_4^d$.
\end{proposition}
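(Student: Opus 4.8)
The plan is to localise the hypothesis to pairs, to settle the case $k=2$ by a dichotomy on the tessellation index, and then to use that case to rule out $k\ge 3$. Throughout one may assume the $M_i$ are pairwise distinct, since otherwise a shorter list already represents $A$. The first observation is that the condition ``no new entry in the filled subcubes of $M_i$'' passes to sub-sums: because $\supp(M_j)\subseteq\supp(A)$ for every $j$, for any filled subcube $C$ of $M_i$ we get $\supp(M_j)\cap C\subseteq\supp(A)\cap C=\supp(M_i)\cap C$, so any convex combination of two of the $M_i$ again adds no new entry to the filled subcubes of either summand. Hence it suffices to prove the statement for $k=2$ and then exclude $k\ge 3$.

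For $k=2$ I would normalise $M_1=\mathcal M_4^d$ using Proposition~\ref{Md4blockform}, put $M_2$ into block form via Proposition~\ref{filledintersec}, and split on the tessellation index $t$ of $A$. If $t=-\infty$ the filled subcubes of $M_1,M_2$ are disjoint and we are done. If $t=d-1$, then $A$ is precisely a convex sum of the kind treated in Proposition~\ref{Ld4unique}, so $A$ is equivalent to a matrix from $\mathcal L_4^d$. If $t=d$, the filled subcubes of $M_1$ and $M_2$ coincide, and on each common filled subcube $C$ both $\supp(M_1)\cap C$ and $\supp(M_2)\cap C$ are permutations of order $2$ of cardinality $2^{d-1}$; ``no new entry'' gives $\supp(M_2)\cap C\subseteq\supp(M_1)\cap C$, hence equality, hence $\supp(M_1)=\supp(M_2)$ and $M_1=M_2$, contrary to distinctness. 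So the whole content of the case $k=2$ is to prove that $0\le t\le d-2$ is impossible.

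To do this I would reduce to dimension $3$. When $1\le t\le d-2$ the partition $\mathcal E'$ of $M_2$ has at least one coordinate equal to $2$ and at least two coordinates different from $2$; restricting $A$ to a $3$-dimensional plane whose three free positions are one ``$2$''-coordinate and two non-``$2$'' coordinates, with the remaining positions fixed so that $M_1$ and $M_2$ restrict to distinct permutations (possible since $M_1\ne M_2$), produces a convex sum of two $3$-dimensional permutations equivalent to $\mathcal M_4^3$, with no new entry in the filled subcubes of either, and with tessellation index $1$ (the restricted partitions being $(2,2,2)$ and $(2,\ast,\ast)$ with $\ast\ne 2$, which forces intersection dimension $1$ by Proposition~\ref{filledintersec}); the case $t=0$ restricts the same way to tessellation index $0$. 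Thus everything reduces to the dimension-$3$ statement: for distinct $3$-dimensional permutations $M_1,M_2$ of order $4$ equivalent to $\mathcal M_4^3$, a convex sum with no new entry in the filled subcubes of $M_1$ and $M_2$ cannot have tessellation index $0$ or $1$. This I would verify by exhaustive enumeration of the finitely many nonequivalent such pairs, exactly as Claims~\ref{clPPTV3} and~\ref{cl1} were obtained. The hard part will be precisely here: confirming that the restriction to a $3$-plane really does keep the tessellation index in the ``bad'' range and simultaneously preserves the no-new-entries property, and then organising the finite check; the remaining steps are bookkeeping with Proposition~\ref{filledintersec}.

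Finally, for $k\ge 3$: by the pairwise reduction and the case $k=2$, every pair $(M_i,M_j)$ either has disjoint filled subcubes or is equivalent to a matrix from $\mathcal L_4^d$, hence has tessellation index in $\{-\infty,d-1\}$. Proposition~\ref{filledintersec} makes disjointness rigid — after normalising $M_i=\mathcal M_4^d$, a partner with disjoint filled subcubes must be a block permutation with partition $(2,\dots,2)$ and $s=1$ — so no $M_i$ can be disjoint from two distinct $M_j$ (those two would then share all filled subcubes, hence be equal by the $t=d$ analysis); that is, the disjointness relation is a partial matching on $\{M_1,\dots,M_k\}$. Consequently, when $k\ge 3$ some pair, say $(M_1,M_2)$, has tessellation index $d-1$, so after normalisation $M_2$ is a block permutation with exactly $d-1$ coordinates equal to $2$; a short case analysis of the partition of a third matrix $M_3$ against $M_1$ and against $M_2$, computing the resulting tessellation indices via Proposition~\ref{filledintersec}, forces a pair whose tessellation index lies in $\{0,\dots,d-2,d\}$, which has already been excluded. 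Hence $k=2$. In the situation where this proposition is invoked one additionally knows $\per A=0$, in which case the $k\ge 3$ step can alternatively be closed by exhibiting a positive diagonal inside $\supp(M_1)\cup\supp(M_2)\cup\supp(M_3)$ (cf. Lemma~\ref{addtofilled} and Proposition~\ref{nonintersect}), contradicting $\per A=0$.
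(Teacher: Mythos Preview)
Your overall architecture---reduce to pairs, settle $k=2$ by a dichotomy on the tessellation index with a reduction to dimension $3$, then exclude $k\ge3$---is genuinely different from the paper's proof, which runs a straight induction on $d$ via hyperplanes and never separates $k=2$ from $k\ge3$. Your $k=2$ argument is sound in outline: the no-new-entries condition does restrict to planes, $3$-planes of matrices equivalent to $\mathcal{M}_4^d$ are equivalent to $\mathcal{M}_4^3$, and choosing the three free positions to include exactly one coordinate where $\mathcal{E}'=2$ (when $1\le t\le d-2$) does pin the restricted tessellation index to $1$ by Proposition~\ref{filledintersec}; the remaining finite check in dimension $3$ is exactly the base case the paper also invokes.

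The gap is in the $k\ge3$ step. Your claim that a ``short case analysis'' of the partition of $M_3$ against $M_1$ and $M_2$ forces some pair to have tessellation index in $\{0,\dots,d-2,d\}$ is not correct as stated. Take $M_1=\mathcal{M}_4^d$, take $M_2$ with partition $(2,\dots,2,\varepsilon)$, $\varepsilon\ne2$ (so $(M_1,M_2)$ has index $d-1$), and take $M_3$ with partition $(2,\dots,2)$ and $s=1$ (so $(M_1,M_3)$ has index $-\infty$). The filled subcubes of $M_2$ and $M_3$ agree on the first $d-1$ coordinates of the partition, and a direct computation shows that a filled subcube of $M_2$ meets a filled subcube of $M_3$ in a $(d-1)$-dimensional slice; hence $(M_2,M_3)$ also has index $d-1$. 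All three pairwise indices sit in $\{-\infty,d-1\}$, and nothing in your argument rules this configuration out. Whether such a triple actually survives the no-new-entries constraint is precisely what remains to be proved, and Proposition~\ref{filledintersec} alone does not decide it.

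The paper sidesteps this entirely: applying the inductive hypothesis to a hyperplane already forces that hyperplane to be a sum of at most two $\mathcal{M}_4^{d-1}$'s, and if one hyperplane is of $\mathcal{L}_4^{d-1}$ type with special direction $j$, that special direction propagates through all hyperplanes of any third direction, pinning the global structure to $\mathcal{L}_4^d$ (and hence $k=2$). If you want to rescue your route, the natural fix is to push the $k\ge3$ case through the same restriction-to-$3$-planes mechanism you used for $k=2$ (the $d=3$ computer check already excludes triples there); but then you must argue that one can choose a $3$-plane on which $M_1,M_2,M_3$ remain pairwise distinct, which you have not done and which is not automatic.
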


\begin{proof}
Without loss of generality, we may assume that $M_1 = \mathcal{M}_4^d$.

We prove the statement by induction on $d$. The base of induction is  $d = 3$ and it can be checked by computer enumeration. 

Consider a  $d$-dimensional polystochastic matrix $A$ of order $4$ obtained as a convex combination of   matrices $M_1, \ldots, M_k$, $k \geq 2$,  equivalent to $\mathcal{M}_4^d$ and such that no one new entry is added to the support of filled subcubes of $M_i$.  
Note that every hyperplane $\Gamma$ of the matrix $A$ is a $(d-1)$-dimensional polystochastic matrix  that is either equivalent to the multidimensional permutation $\mathcal{M}_4^{d-1}$ or it satisfies the same condition as the matrix $A$.   In the second case, by the inductive assumption, the support of hyperplane $\Gamma$ is a union of two matrices equivalent to  $\mathcal{M}_4^{d-1}$ such that their filled subcubes do not intersect  or    it is  equivalent to some matrix from $\mathcal{L}_4^{d-1}$.   Since $k \geq 2$, not all hyperplanes $\Gamma$ of $A$  are multidimensional permutations. 

Note that if all supports of hyperplanes $\Gamma$ are  unions  of two matrices equivalent to  $\mathcal{M}_4^{d-1}$ with disjoint filled subcubes, then the same holds for the matrix $A$.

Assume that there exists a hyperplane $\Gamma$ of direction $i$ of the matrix $A$ equivalent to  $\mathcal{L}_4^{d-1}$  having a special direction $j$.   Since every hyperplane $\Gamma'$ of direction $\ell \neq i,j$ intersects $\Gamma$ by $(d-2)$-dimensional planes equivalent to a matrix from $\mathcal{L}_4^{d-2}$, we deduce that  all such hyerplanes $\Gamma'$  are also equivalent to matrices from $\mathcal{L}_4^{d-1}$ and, moreover, their special direction is $j$. Then $A$ is the convex sum of two matrices $M_1$ and $M_2$ equivalent to $\mathcal{M}_4^{d}$ such that two of four hyperplanes of direction $j$ are equivalent to  $\mathcal{M}_4^{d-1}$ and the other two hyperplanes of this direction are double permutations. 

Recall that, by Proposition~\ref{Md4blockform}, the first two hyperplanes  of direction $j$ in $A$ have the unique parameters as block permutations. Then the matrix $M_2$ has parameters $(\mathcal{E}, \lambda, s)$, where $\mathcal{E}$ has an element nonequal to $2$ only at position $j$ because otherwise $M_1$ and $M_2$ (and their filled subcubes) cannot coincide in some hyperplanes of direction $j$ (see Proposition~\ref{filledintersec}). Thus the tesselation index of the matrix $A$ is equal to $d-1$ and, by Proposition~\ref{Ld4unique}, $A$  is equivalent to some matrix $\mathcal{L}_4^{d}$.
\end{proof}

At last, we are ready to prove the main result of the paper.

\begin{proof}[of Theorem~\ref{poly4complchar}]

Let $A$ be a $d$-dimensional polystochastic of order $4$ such that $\per A = 0$. 
By Lemma~\ref{thPPTV2}, $d$ is odd and  there are different  permutation matrices $M_1, \ldots, M_k$ equivalent to $\mathcal{M}_4^d$ such that $A$ is a convex combination of  $M_1, \ldots, M_k$. If $k = 1$, then $A $ is equivalent to  $\mathcal{M}_4^d$.

Suppose that $k \geq 2$. If there is a matrix $M_i$ such that some of another matrix $M_j$ adds a new entry to a filled subcube of  $M_i$, then, by Lemma~\ref{addtofilled}, the permanent of $A$ is nonzero. Thus, no one new entry is added to the support of filled subcubes of $M_i$, $i  =1, \ldots, k$. By Proposition~\ref{nonewinfilled},  $k =2$ and  either the filled subcubes of $M_1$ and $M_2$ do not intersect  or   the matrix $A$ is equivalent to some matrix from $\mathcal{L}_4^d$. In the first case, the tessellation index of the matrix $A$ is $-\infty$, and so by Proposition~\ref{nonintersect}, we have that $\per A \neq 0$. So the unique remained possibility for $A$ to have the zero permanent is to be equivalent to some matrix from $\mathcal{L}_4^d$.   
\end{proof}

\section{Acknowledgements}

The research of A. L. Perezhogin and A. A. Taranenko has been carried out within the framework of a state assignment of the Ministry of Education and Science of the Russian Federation for the Institute of Mathematics of the Siberian Branch of the Russian Academy of Sciences (project no. FWNF-2022-0017).


\begin{thebibliography}{99}

\bibitem{Ah}
 R. Aharoni, A. Georgakopoulos, P. Spr\"ussel.  Perfect matchings in $r$-partite
$r$-graphs. European J. Combin. 30 (2009),  P. 39--42.






\bibitem{ChW}
B. Child, I. M. Wanless. Multidimensional permanents of
polystochastic matrices. Linear Algebra Appl. 586 (2020), P.
89--102.


\bibitem{euler}
L. Euler. Recherches sur une nouvelle esp\`ece de quarr\`es magiques. Verh. Zeeuwsch. Gennot. Weten. Vliss. 9 (1782), P. 85--239. Enestr\"om E530, Opera Omnia OI7, 291--392.


\bibitem{K2008}
D. S. Krotov. On decomposability of $4$-ary distance $2$-MDS codes,
double-codes, and $n$-quasigroups of order $4$. Discrete Math. 308(15)
(2008), P. 3322--3334.

\bibitem{KP19}
D. S. Krotov, V. N.  Potapov. On the cardinality spectrum and the number
of latin bitrade of order $3$. Problems of Information Transmission 55(4) (2019), P.~344--366.

\bibitem{cencus}
B. D. McKay, I. M. Wanless. A census of small latin hypercubes. SIAM J. Discrete Math. 22 (2008), P. 719--736.


\bibitem{Mink}
H. Minc. Permanents. Encyclopedia of Mathematics and its
Applications, 6. Addison-Wesley Publishing Co., Reading, MA, 1978.
xviii+205 pp. ISBN: 0-201-13505-1

\bibitem{Mont}
R. Montgomery. A proof of the Ryser--Brualdi--Stein conjecture for large even $n$. ArXiv:2310.19779.  Available at https://arxiv.org/abs/2310.19779.


\bibitem{PPV}
A. L. Perezhogin, V. N. Potapov,  S. Y. Vladimirov. Every latin hypercube of order 5 has transversals. J. Combin. Des. 32 (2024),  P. 679--699.

\bibitem{PK06}
V. N. Potapov, D. S. Krotov. Asymptotics of $n$-quasigroups of order $4$. Siberian Math. J. 47(4) (2006), P.~873--887.

\bibitem{ryser}
H. J. Ryser. Neuere Probleme der Kombinatorik.  Vortrage \"uber Kombinatorik Oberwolfach (Juli 24--29, 1967), P.~69--91.

\bibitem{OApaper}
E. D. Schoen, P. T. Eendebak, M. V. M. Nguyen. Complete enumeration of pure-level and mixed-level orthogonal arrays. J. Combin. Designs 18 (2010), P. 123--140.

\bibitem{Shi}
M. J. Shi, S. K. Wang, X. X. Li, D. S. Krotov. On the number of
frequency hypercubes $F^n(4;2,2)$. Sib. Math. J. 62(5) (2021), P. 951--962.
(Translated from Sibirsk. Mat. Zh. 62(5) (2021), P. 1173--1187).

\bibitem{sun}
Z.-W. Sun. An additive and restricted sumsets.  Math. Res. Lett. 15(6) (2008), P. 1263--1276.

\bibitem{AAT16}
A. A. Taranenko. Permanents of multidimensional matrices: properties
and applications.  J. Appl. Ind. Math. 10(4) (2016), P. 567--604.
(Translated from Diskretn. Anal. Issled. Oper. 23(4) (2016), P. 35--101).

\bibitem{AAT18}
A. A. Taranenko. Transversals in completely reducible multiary
quasigroups and in multiary quasigroups of order 4. Discrete Math.,
341(2) (2018), P. 405--420. 


\bibitem{AAT20}
 A. A. Taranenko. Positiveness of the permanent of $4$-dimensional
polystochastic matrices of order 4. Discrete Appl. Math. 276 
(2020), P. 161--165.

\bibitem{Wan}
I. M. Wanless. Transversals in Latin squares: a survey. Surveys in
combinatorics 2011, volume 392 of London Math. Soc. Lecture Note
Ser., Cambridge Univ. Press, Cambridge, 2011, P. 403--437.

\bibitem{YR}
W. B. Jurkat and H. J. Ryser, Extremal configurations and
decomposition theorems. I, J. Algebra 8 (1968), P. 194--222.



\end{thebibliography}
\end{document}